\documentclass{amsart}
\usepackage[latin1]{inputenc}
\usepackage{amssymb}
\usepackage{amsmath}
\usepackage{amsfonts}
\usepackage{amsthm}
\usepackage{enumerate}
\usepackage{color}

\numberwithin{equation}{section}

\newtheorem{theorem}{Theorem}[section]
\newtheorem{lemma}{Lemma}[section]
\newtheorem{proposition}{Proposition}[section]
\newtheorem{corollary}{Corollary}[section]

\newtheorem{remark}{Remark}[section]

\def\R{\mathbb{R}}
\def\Z{\mathbb{Z}}

\def\D{\mathbb{D}}
\def\S{\mathcal{S}}

\def\N{\mathcal{N}}

\def\F{\mathcal{F}}

\def\eps{\varepsilon}

\def\supp{\, \mathop{\rm supp}\nolimits}
\def\sgn{\mathop{\rm sgn}\nolimits}
\newcommand\cro[1]{\langle #1 \rangle}

\begin{document}

\title[Well-posedness for BOZK]{Local and global well-posedness results for the Benjamin-Ono-Zakharov-Kuznetsov equation}
\author[F. Ribaud]{Francis Ribaud}
\address{Universit\'e Paris-Est Marne-la-Vall\'ee\\ Laboratoire d'Analyse et de Math\'ematiques Appliqu\'ees (UMR 8050)\\  5 Bd Descartes, Champs-sur-Marne\\ 77454 Marne-la-Vall\'ee Cedex 2
\\ France }
\email{francis.ribaud@univ-mlv.fr}
\author[S. Vento]{St\'ephane Vento}
\address{Universit\'e Paris 13\\ Sorbonne Paris Cit\'e, LAGA, CNRS (UMR 7539)\\  99, avenue Jean-Baptiste Cl\'ement\\ F-93 430 Villetaneuse\\ France }
\email{vento@math.univ-paris13.fr}
\maketitle

\begin{abstract}
We show that the initial value problem associated to the dispersive generalized Benjamin-Ono-Zakharov-Kuznetsov equation
$$
 u_t-D_x^\alpha u_{x} + u_{xyy} = uu_x,\quad (t,x,y)\in\R^3,\quad 1\le \alpha\le 2,
$$
is locally well-posed in the spaces $E^s$, $s>\frac 2\alpha-\frac 34$, endowed with the norm
$
\|f\|_{E^s} = \|\cro{|\xi|^\alpha+\mu^2}^s\hat{f}\|_{L^2(\R^2)}.
$
As a consequence, we get the global well-posedness in the energy space $E^{1/2}$ as soon as $\alpha>\frac 85$. The proof is based on the approach of the short time Bourgain spaces developed by Ionescu, Kenig and Tataru \cite{IKT} combined with new Strichartz estimates and a modified energy.
\end{abstract}

\section{Introduction}
In this paper we study a class of two-dimensional nonlinear dispersive equations which extend the well-known Korteweg-de Vries (KdV) and Benjamin-Ono (BO) equations. There are several ways to generalize such 1D models in order to include the effect of long wave lateral dispersion. For instance one can consider the Kadomstev-Petviashvili (KP) and Zakharov-Kuznetsov (ZK) equations.
Here we are interested with the effect of the dispersion in the propagation direction applied to the initial value problem for the ZK equation. More precisely we consider the generalized g-BOZK equation
\begin{equation}\label{bozk}
 u_t-D_x^\alpha u_{x} + u_{xyy} = uu_x,\quad (t,x,y)\in\R^3
\end{equation}
where $D_x^\alpha$ is the Fourier multiplier by $|\xi|^\alpha$, $1\le\alpha\le 2$. When $\alpha=2$, \eqref{bozk} is the well-known ZK equation introduced by Zakharov and Kuznetsov in \cite{ZK} to describe the propagation of ionic-acoustic waves in magnetized plasma. We refer to \cite{LLS} for a rigorous derivation of ZK. For $\alpha=1$, equation \eqref{bozk} is the so-called Benjamin-Ono-Zakharov-Kuznetsov (BOZK) equation introduced in \cite{JCMS} and \cite{LMSV} and has applications to thin nanoconductors on a dielectric substrate.

We notice that  \eqref{bozk} enjoys the two following conservation laws:
\begin{equation}\label{conslaw}
\frac{d}{dt}\mathcal{M}(u) = \frac{d}{dt}\mathcal{H}(u) = 0,
\end{equation}
where
$$
\mathcal{M}(u) = \int_{\R^2} u^2 dxdy
$$
and
$$
\mathcal{H}(u) = \int_{\R^2} \left( |D^{\frac \alpha 2}_xu|^2 + |u_y|^2 - \frac 13 u^3 \right)dxdy.
$$
Therefore, it is natural to study the well-posedness of g-BOZK in the functional spaces $E^0$ and $E^{1/2}$, and more generally in $E^s$ defined for any $s\in\R$ by the norm
$$
\|f\|_{E^s} = \|\cro{|\xi|^\alpha+\mu^2}^s\hat{f}(\xi,\mu)\|_{L^2(\R^2)}.
$$
Observe that $E^s$ is nothing but the anisotropic Sobolev space $H^{\alpha s, 2s}(\R^2)$. In particular when $\alpha=2$, then $E^s=H^{2s}(\R^2)$.

Let us recall some well-known facts concerning the associated 1D model
\begin{equation}\label{gBO}
u_t-D^\alpha_xu_x = uu_x,\quad (t,x)\in\R^2.
\end{equation}
The Cauchy problem for \eqref{gBO}, and especially the cases $\alpha=1,2$ (respectively the BO and KdV equation), has been extensively studied these last decades, and is now well-understood.
The standard fixed point argument in suitable functional spaces allows to solve the KdV equation at very low regularity level (see \cite{KPV} for instance). This is in sharp contrast with what occurs in the case $\alpha<2$, since it was shown by Molinet-Saut-Tzvetkov \cite{MST} that the solution flow map for \eqref{gBO} cannot be $C^2$ in any Sobolev spaces (due to bad low-high interactions). Therefore the problem cannot be solved using such arguments. In view of this result, three approaches were developed to lower the regularity requirement. The first one consists in introducing a nonlinear gauge transform of the solution that solves an equation with better interactions (see \cite{T}-\cite{HIKK}). This method was proved to be very efficient but as pointed out in \cite{CP},  it is not clear how to find such a transform adapted to our 2D problem \eqref{bozk}. The second one was introduced very recently by Molinet and the second author \cite{MV} and consists in an improvement of the classical energy method by taking into account the dispersive effect of the equation. This method is more flexible with respect to perturbations of the equation but requires that the dispersive part of the equation does not exhibit too strong resonances. Unfortunately, the cancelation zone of the resonance function $\Omega$ associated to g-BOZK (see \eqref{omega} for the definition) seems too large to apply this technique to equation \eqref{bozk}. Finally the third method introduced to solve \eqref{gBO} consists in improving dispersive estimates by localizing it in space frequency depending time intervals. In the context of the Bourgain spaces, this approach was successfully applied by Guo in \cite{G} to solve \eqref{gBO} (see also \cite{IKT} for an application to the KP-I equation) and seems to be the best way to deal with the g-BOZK equation.

Now we come back to the 2D problem \eqref{bozk}. The initial value problem for the ZK equation ($\alpha=2$) has given rise to many papers these last years. In particular, Faminskii proved in \cite{F} that it is globally well-posed in the energy space $H^1(\R^2)$. The best result concerning the local well-posedness was recently independently obtained by Gr\"unrock and Herr in \cite{GH} and by Molinet and Pilod in \cite{MP} where they show the LWP of \eqref{bozk} in $H^s(\R^2)$, $s>1/2$. Similarly to the KdV equation, all these results were proved using the fixed point procedure. Concerning the case $\alpha=1$, using classical energy methods and parabolic regularization that does not take into account the dispersive effect of the equation, Cunha and Pastor \cite{CP} have proved the well-posedness of \eqref{bozk} in $H^s(\R^2)$ for $s>2$ as well as in the anisotropic Sobolev spaces $H^{s_1,s_2}(\R^2)$, $s_2>2$, $s_1\ge s_2$. Also, it was proved in \cite{EP} that the solution mapping fails to be $C^2$ smooth in any $H^{s_1,s_2}(\R^2)$, $s_1,s_2\in \R$. Moreover this result even extends to the case $1\le\alpha <\frac 43$.

In the intermediate cases $1<\alpha<2$, there is no positive results concerning the well-posedness for \eqref{bozk}. Our main theorem is the following.

\begin{theorem}\label{theo-main}
  Assume that $1\le \alpha\le 2$ and $s>s_\alpha:= \frac 2\alpha-\frac 34$. Then for every $u_0\in E^s$, there exists a positive time $T=T(\|u_0\|_{E^s})$ and a unique solution $u$ to \eqref{bozk} in the class
  $$
  C([-T, T]; E^s)\cap F^s(T)\cap B^s(T).
  $$
  Moreover, for any $0<T'<T$, there exists a neighbourhood $\mathcal{U}$ of $u_0$ in $E^s$ such that the flow map data-solution
  $$
  S_{T'}^s : \mathcal{U}\to C([-T',T']; E^s), u_0\mapsto u,
  $$
  is continuous.
\end{theorem}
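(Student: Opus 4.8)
The plan is to follow the short time Fourier restriction norm method of Ionescu, Kenig and Tataru \cite{IKT}, as adapted by Guo \cite{G} to the one-dimensional model \eqref{gBO}. Since the flow map fails to be $C^2$, a direct contraction argument is excluded, and the whole scheme rests on a priori estimates coupling three norms: the short time Bourgain space $F^s(T)$ for the solution, a dual space $\mathcal{N}^s(T)$ for the nonlinearity $\partial_x(u^2)$, and the energy space $B^s(T)$. I would build these spaces by decomposing in dyadic $x$-frequencies $N$ and imposing, on each frequency block, an $X^{s,1/2}$-type norm localized to time intervals of length $\sim N^{-1}$, the natural short time scale adapted to the dispersion of \eqref{bozk}; the $B^s(T)$ norm is essentially $\sup_{|t|\le T}\|u(t)\|_{E^s}$ together with the corresponding localized pieces. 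The first, and routine, step is the linear estimate
$$
\|u\|_{F^s(T)} \lesssim \|u\|_{B^s(T)} + \|\partial_x(u^2)\|_{\mathcal{N}^s(T)},
$$
valid for any solution of \eqref{bozk} on $[-T,T]$; it follows from the definition of the short time spaces and Duhamel's formula, the short time localization ensuring that no derivative is lost.

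The core of the argument is the bilinear estimate
$$
\|\partial_x(uv)\|_{\mathcal{N}^s(T)} \lesssim \|u\|_{F^s(T)}\|v\|_{F^s(T)}.
$$
I would prove it through a Littlewood--Paley decomposition into low-high, high-low and high-high frequency interactions, the gain in each regime coming from two ingredients: the new Strichartz estimates for the linear group associated to \eqref{bozk}, whose dispersion relation is $\omega(\xi,\mu)=\xi|\xi|^\alpha+\xi\mu^2$, and the lower bound on the resonance function $\Omega$ built from $\omega$. The short time localization to intervals of length $\sim N^{-1}$ is precisely what tames the dangerous low-high interactions responsible for the loss of smoothness of the flow; the admissible range $s>s_\alpha=\frac 2\alpha-\frac 34$ is fixed by the balance between the Strichartz gain and the derivative loss carried by $\partial_x$ in those interactions. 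This step, and the identification of the exact threshold $s_\alpha$, is where I expect the main difficulty to lie.

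Since the short time bilinear estimate cannot by itself recover the $B^s(T)$ norm, I would close the loop with an energy estimate of the form
$$
\|u\|_{B^s(T)}^2 \lesssim \|u_0\|_{E^s}^2 + \|u\|_{F^s(T)}^3.
$$
Here the large cancelation zone of $\Omega$ --- exactly the feature that prevents the plain energy method of Molinet and the second author \cite{MV} from applying --- forces the use of a modified energy: one adds to $\|u(t)\|_{E^s}^2$ a cubic correction term designed to cancel the non-perturbative, near-resonant contribution arising when the energy is differentiated along the flow, so that only remainders controlled by the $F^s(T)$ norm survive. Combining the three estimates produces a closed system in $X(T)=\|u\|_{F^s(T)}$, $Y(T)=\|u\|_{B^s(T)}$ and $Z(T)=\|\partial_x(u^2)\|_{\mathcal{N}^s(T)}$, namely $X\lesssim Y+Z$, $Z\lesssim X^2$ and $Y^2\lesssim \|u_0\|_{E^s}^2+X^3$, which bootstraps, for $T=T(\|u_0\|_{E^s})$ small enough, to an a priori bound on $X(T)$.

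It remains to upgrade these a priori estimates to the stated well-posedness. For existence I would regularize \eqref{bozk}, solving the smoothed problems by a standard fixed point at high regularity, and then pass to the limit using the uniform bounds above together with a compactness argument. Uniqueness I would obtain from a bilinear estimate for the difference of two solutions at a slightly lower regularity, closed by a Gronwall-type continuity argument. Finally, since the flow is not smooth, continuous dependence cannot come from the contraction and must instead be established by a Bona--Smith type argument: approximating $u_0$ by smooth data, using the uniform a priori bounds and the difference estimate to control $S^s_{T'}(u_0)-S^s_{T'}(v_0)$ in $C([-T',T'];E^s)$, and invoking the continuity of the regularization in $E^s$ to conclude.
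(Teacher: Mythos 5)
Your outline reproduces the paper's global strategy (short-time spaces of Ionescu--Kenig--Tataru, a modified energy, Bona--Smith), but two of its concrete choices would break down. First, the frequency decomposition and the short-time scale are not the right ones. You localize in dyadic $x$-frequencies $N$ and work on time intervals of length $\sim N^{-1}$. The paper instead localizes in the anisotropic frequency $H\sim|\xi|^\alpha+\mu^2$ --- exactly the quantity weighted by the $E^s$-norm --- and uses intervals of length $H^{-\beta}$ with $\beta=\frac2\alpha-1$. Both points matter. A block with $|\xi|$ small but $|\mu|$ large carries a large $E^s$-weight, yet in your scheme it sits on time intervals of length $\sim1$, so the resonances coming from the $\mu$-dependence of $\omega(\xi,\mu)=\xi(|\xi|^\alpha+\mu^2)$ are not tamed, and your dyadic $B^s$-blocks do not reconstruct the anisotropic $E^s$-norm that the energy method must control. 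Conversely, over-localizing is costly: every energy estimate is summed over $\sim TH^\beta$ subintervals (this is where the factor $H^\beta$ enters Lemma \ref{estI}), so each unnecessary power in the localization is a loss of derivatives. The threshold $s_\alpha=\frac2\alpha-\frac34$ is precisely the balance point for $\beta=\frac2\alpha-1$; with intervals $N^{-1}$ you would not reach it, and at $\alpha=2$ (where the paper's $\beta=0$ means no localization at all, recovering the $H^{1/2+}$ ZK result) your choice strictly loses regularity. Relatedly, the cubic correction in the energy is not there to cancel ``near-resonant'' contributions: it compensates the fact that in two dimensions one cannot move the derivative in $\int P_Hu\,P_H(uu_x)$ onto the low-frequency factor by commutators without losing a $y$-derivative; the bad term is $H^{-1}\int\Pi_\eta(P_{\ll H}u_y,v_y)P_Hv_x$.

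Second, your bootstrap ``for $T=T(\|u_0\|_{E^s})$ small enough'' does not close. In short-time spaces none of the three estimates carries a positive power of $T$, so the system $X\lesssim Y+Z$, $Z\lesssim X^2$, $Y^2\lesssim\|u_0\|_{E^s}^2+X^3$ yields $X^2\lesssim\|u_0\|_{E^s}^2+X^3+X^4$ with constants independent of $T$, and for large data no continuity argument can run. The paper first rescales, $u_\lambda(t,x,y)=\lambda u(\lambda^{1+1/\alpha}t,\lambda^{1/\alpha}x,\lambda^{1/2}y)$, to make the data small in $E^s$, proves everything on the fixed time interval $[-1,1]$, and recovers $T(\|u_0\|_{E^s})$ by undoing the scaling: the smallness comes from the data, not from $T$. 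Finally, for uniqueness and for the Bona--Smith convergence you need the $L^2$-Lipschitz bound for the difference of two solutions, which rests on the asymmetric estimates $\|\partial_x(uv)\|_{\mathcal{N}^0(T)}\lesssim\|u\|_{F^0(T)}\|v\|_{F^s(T)}$ and an $E^0$-level energy inequality for the difference equation; your symmetric $F^s\times F^s$ bilinear estimate alone does not produce these.
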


\begin{remark} We refer to Section \ref{spaces} for the definition of the functional spaces $F^s(T)$ and $B^s(T)$.
\end{remark}

\begin{remark}
  When $\alpha=2$, we recover the local well-posedness result in $E^{1/4+}=H^{1/2+}(\R^2)$ for ZK proved in \cite{GH} and \cite{MP}. In the case $\alpha=1$, Theorem \ref{theo-main} improves the previous results obtained in \cite{CP}.
\end{remark}

We discuss now some of the ingredients in the proof of Theorem \ref{theo-main}. We will adapt the approach introduced by Ionescu, Kenig and Tataru \cite{IKT} to our model (see also \cite{G}-\cite{KP} for applications to other equations). It consists in an energy method combined with linear and nonlinear estimates in the short-time Bourgain's spaces $F^s(T)$ and their dual $\N^s(T)$. The $F^s(T)$ spaces enjoys a $X^{s,1/2}$-type structure but with a localization in small time intervals whose length is of order $H^{1-\frac 2\alpha}$ when the space frequency $(\xi,\mu)$ satisfies $|\xi|^\alpha+\mu^2 \sim H$. When deriving bilinear estimates in these spaces, one of the main obstruction is the strong resonance induced by the dispersive part of the equation. To overcome this difficulty, we will derive some improved Strichartz estimates for free solutions localized outside the critical region $\{2\mu^2= \alpha(\alpha+1)|\xi|^\alpha\}$. Finally, we need energy estimates in order to apply the classical Bona-Smith argument (see \cite{BS}) and conclude the proof of Theorem \ref{theo-main}. To derive such energy estimates, we are led to deal with terms of the form
$$
\int_{\R^2} P_Hu P_H(uu_x),
$$
where $P_H$ localizes in the frequencies $\{|\xi|^\alpha+\mu^2\sim H\}$.
Unfortunately, in the two-dimensional setting, we cannot put the $x$-derivative on the lower frequency term \emph{via} commutators and integrations by parts without loosing a $y$-derivative. Therefore, we need to add a cubic lower-order term to the energy in order to cancel those bad interactions.

Assuming that $s_\alpha < \frac 12$, we may use the conservation laws \eqref{conslaw} combined with the embedding $E^{1/2}\hookrightarrow L^3(\R^2)$ to get an a priori bound of the $E^{1/2}$-norm of the solution and then iterate Theorem \ref{theo-main} to obtain the following global well-posedness result.
\begin{corollary}\label{cor-main}
  Assume that $\frac 85 < \alpha\le 2$ and $s=\frac 12$. Then the results of Theorem \ref{theo-main} are true for $T>0$ arbitrary large.
\end{corollary}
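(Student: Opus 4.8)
The plan is to upgrade the local result of Theorem~\ref{theo-main} to a global one at the energy regularity by means of the conservation laws \eqref{conslaw} and a standard continuation argument. First observe that $s_\alpha = \frac2\alpha-\frac34 < \frac12$ is equivalent to $\alpha > \frac85$, so under the hypothesis of the corollary the energy space $E^{1/2}$ lies strictly above the local well-posedness threshold and Theorem~\ref{theo-main} applies to any datum $u_0 \in E^{1/2}$, producing a solution on $[-T,T]$ with $T = T(\|u_0\|_{E^{1/2}})$. The whole point is therefore to bound $\|u(t)\|_{E^{1/2}}$ \emph{a priori}, uniformly in $t$, in terms of $\|u_0\|_{E^{1/2}}$ only; the norm-dependent existence time then yields global existence by continuation.

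Write $Y(f) = \|D_x^{\alpha/2}f\|_{L^2}^2 + \|f_y\|_{L^2}^2$, so that $\|f\|_{E^{1/2}}^2 \simeq \mathcal{M}(f) + Y(f)$ and, by definition of $\mathcal{H}$, $Y(f) = \mathcal{H}(f) + \frac13\int_{\R^2} f^3$. The crux is to control the cubic term by a \emph{subcritical} power of $Y$. To this end I would insert the anisotropic rescaling $u_\lambda(x,y) = u(\lambda x, \lambda^{\alpha/2}y)$ into the embedding $E^{1/2}\hookrightarrow L^3(\R^2)$ and optimize over $\lambda > 0$: since $\|u_\lambda\|_{L^2}^2 = \lambda^{-1-\alpha/2}\|u\|_{L^2}^2$, $Y(u_\lambda) = \lambda^{\alpha/2-1}Y(u)$ and $\|u_\lambda\|_{L^3}^3 = \lambda^{-1-\alpha/2}\|u\|_{L^3}^3$, balancing the two contributions of $\|u_\lambda\|_{E^{1/2}}^2$ leads to the Gagliardo--Nirenberg type inequality
\begin{equation*}
\Big| \int_{\R^2} u^3 \Big| \le \|u\|_{L^3}^3 \le C\, \|u\|_{L^2}^{\frac{5\alpha-2}{2\alpha}}\, Y(u)^{\frac{\alpha+2}{4\alpha}}.
\end{equation*}
The exponent $\frac{\alpha+2}{4\alpha}$ is $<1$ as soon as $\alpha > \frac23$, hence in particular throughout the range $\alpha > \frac85$ (for $\alpha=2$ this reduces to the classical $\|u\|_{L^3}^3 \lesssim \|u\|_{L^2}^2\|\nabla u\|_{L^2}$).

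Now I would combine this with conservation. Since $\mathcal{M}(u(t)) = \mathcal{M}(u_0)$ and $\mathcal{H}(u(t)) = \mathcal{H}(u_0)$ for all $t$ in the existence interval, the identity $Y(u(t)) = \mathcal{H}(u_0) + \frac13\int u(t)^3$ together with the inequality above and Young's inequality (using the subcritical exponent $<1$) gives
\begin{equation*}
Y(u(t)) \le |\mathcal{H}(u_0)| + \tfrac12 Y(u(t)) + C\big(\mathcal{M}(u_0)\big),
\end{equation*}
whence $Y(u(t)) \le 2|\mathcal{H}(u_0)| + 2C(\mathcal{M}(u_0))$ and therefore $\sup_t \|u(t)\|_{E^{1/2}} \le R = R(\|u_0\|_{E^{1/2}})$. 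Because the local existence time of Theorem~\ref{theo-main} depends only on the $E^{1/2}$-norm of the data, this uniform bound prevents blow-up: if $T^\ast < \infty$ were the maximal existence time, solving \eqref{bozk} afresh from a time $t_0$ with $T^\ast - t_0 < \frac12 T(R)$ would extend the solution past $T^\ast$, a contradiction. Iterating on a partition of $\R$ into intervals of uniform length $T(R)$ and patching via the uniqueness and the continuity statement of Theorem~\ref{theo-main} produces the global solution together with its continuous flow.

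The main obstacle is not the coercivity computation above, which is elementary, but the rigorous justification of the conservation laws \eqref{conslaw} at the energy regularity $s=\frac12$: \emph{a priori} the identities $\frac{d}{dt}\mathcal{M} = \frac{d}{dt}\mathcal{H} = 0$ are only clear for smooth solutions. I would remedy this by approximating $u_0$ by smooth data $u_0^n \to u_0$ in $E^{1/2}$, for which the corresponding solutions $u^n$ are smooth on a common time interval and satisfy the identities classically, and then passing to the limit using the continuity of the data-to-solution map from Theorem~\ref{theo-main} together with the continuity of the functionals $\mathcal{M},\mathcal{H}\colon E^{1/2}\to\R$ (the latter relying again on $E^{1/2}\hookrightarrow L^3$). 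This is precisely the Bona--Smith type argument already invoked in the proof of Theorem~\ref{theo-main}, and it is where the bulk of the care must be taken.
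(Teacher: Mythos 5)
Your proof is correct and follows essentially the same route as the paper, which only sketches this argument: the identity $s_\alpha<\tfrac12\Leftrightarrow\alpha>\tfrac85$, the conservation of $\mathcal{M}$ and $\mathcal{H}$ combined with the embedding $E^{1/2}\hookrightarrow L^3(\R^2)$ to obtain a uniform a priori bound on $\|u(t)\|_{E^{1/2}}$, and then iteration of Theorem \ref{theo-main} using that the local existence time depends only on $\|u_0\|_{E^{1/2}}$. Your quantitative Gagliardo--Nirenberg refinement via anisotropic scaling and the Bona--Smith justification of the conservation laws at $s=\tfrac12$ are exactly the details the paper leaves implicit.
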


Finally, as in the one dimensional case,  we show that as soon as $\alpha<2$, the solution map $S^s_T$ given by Theorem \ref{theo-main} is not of class $C^2$ for all $s\in\R$. This implies in particular that the Cauchy problem for \eqref{bozk} cannot be solved by direct contraction principle.

\begin{theorem}\label{ill-theo}
  Fix $s\in\R$ and $1\le\alpha<2$. Then there does not exist a $T>0$ such that \eqref{bozk} admits a unique local solution defined on the interval $[-T,T]$ and such that the flow-map data-solution $u_0\mapsto u(t)$, $t\in[-T,T]$ is $C^2$-differentiable at the origin from $E^s$ to $E^s$.
\end{theorem}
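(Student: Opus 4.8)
The plan is to argue by contradiction, following the scheme of Molinet--Saut--Tzvetkov \cite{MST}: a $C^2$ flow map would force a quadratic estimate on the second Picard iterate, which can be violated by data concentrated near the degenerate region $\{2\mu^2=\alpha(\alpha+1)|\xi|^\alpha\}$. Concretely, write $W(t)$ for the free group, $\widehat{W(t)f}(\xi,\mu)=e^{it\omega(\xi,\mu)}\hat f(\xi,\mu)$ with $\omega(\xi,\mu)=\xi|\xi|^\alpha+\xi\mu^2$, and expand the solution of \eqref{bozk} with data $u_0=\gamma\phi$ as $u=\gamma W(t)\phi+\gamma^2u_2+O(\gamma^3)$, where
\[u_2(t)=\int_0^tW(t-s)\big(W(s)\phi\,\partial_xW(s)\phi\big)\,ds.\]
If $u_0\mapsto u(t)$ were $C^2$ at the origin from $E^s$ into $E^s$, uniformly on $[-T,T]$, then $\partial_\gamma^2u|_{\gamma=0}=2u_2$ would be a bounded symmetric bilinear map, so that $\sup_{|t|\le T}\|u_2(t)\|_{E^s}\le C\|\phi\|_{E^s}^2$ for all $\phi\in E^s$. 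The goal is then to produce, for every fixed $s\in\R$ and $1\le\alpha<2$, a family $\phi=\phi_N$ violating this inequality as $N\to\infty$.

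Next I would pass to the Fourier side. Writing $\zeta=(\xi,\mu)$, $\zeta_1+\zeta_2=\zeta$, and $\Omega(\zeta_1,\zeta_2)=\omega(\zeta_1+\zeta_2)-\omega(\zeta_1)-\omega(\zeta_2)$ for the resonance function of \eqref{omega}, one gets
\[\widehat{u_2}(t,\zeta)=e^{it\omega(\zeta)}\int_{\R^2}(i\xi_2)\,\hat\phi(\zeta_1)\hat\phi(\zeta_2)\,\frac{1-e^{-it\Omega(\zeta_1,\zeta_2)}}{i\,\Omega(\zeta_1,\zeta_2)}\,d\zeta_1.\]
On the near-resonant set $\{|t\,\Omega|\lesssim1\}$ the kernel is comparable to $t$: there is no dispersive gain, and the $x$-derivative produces a loss measured by the output frequency. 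The decisive observation is that this set is anomalously large precisely where $\det D^2\omega=2\alpha(\alpha+1)|\xi|^\alpha-4\mu^2$ vanishes, i.e. on the critical curve, since there $\Omega$ is stationary to higher order along the degenerate direction of $D^2\omega$.

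I would then take $\hat\phi=c_1\mathbf{1}_{A_1}+c_2\mathbf{1}_{A_2}$ with $A_1$ a box at frequency $|\xi|\sim N$ sitting on the critical curve (so $\mu\sim N^{\alpha/2}$) and $A_2$ a thin slab at lower $x$-frequency, elongated along the degenerate direction of $D^2\omega(\zeta)$ and positioned so that the output box $A_1+A_2$ stays on the critical curve and $\Omega\approx0$ on $A_1\times A_2$. Keeping only the cross contribution and restricting the output to the sub-region where the kernel is $\approx t$, I would bound $\|u_2(t)\|_{E^s}$ from below and $\|\phi\|_{E^s}$ from above; after optimizing $c_1,c_2$ the quotient is controlled from below by the output frequency times the square root of the coherent measure of $A_2$, up to the ratio of $E^s$-weights. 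Since the output and $A_1$ sit at comparable weights, the dominant growth is heuristically like $N^{(4-2\alpha)/3}$, which tends to $\infty$ for every $1\le\alpha<2$ and degenerates exactly at $\alpha=2$, in agreement with the solvability of ZK by contraction.

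The main obstacle is precisely this anisotropic measure/coherence estimate near the degenerate curve: one must quantify how the vanishing of $\det D^2\omega$ permits $A_2$ to be stretched in the fold direction while keeping $|t\,\Omega|\lesssim1$, and verify that the gain from this enlarged coherent region overcomes the additional $E^s$-weight the stretched packet acquires, uniformly in $s$ and throughout $1\le\alpha<2$. Once the lower bound is secured it contradicts the quadratic estimate for every $s\in\R$, and the same family rules out $C^2$ differentiability at the origin, which proves the theorem.
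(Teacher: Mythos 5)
Your opening reduction is exactly the paper's (and Molinet--Saut--Tzvetkov's): $C^2$ regularity of the flow at the origin forces the quadratic bound $\sup_{|t|\le T}\|u_2(t)\|_{E^s}\lesssim\|\phi\|_{E^s}^2$ on the second Picard iterate, whose Fourier transform carries the kernel $(1-e^{-it\Omega})/(i\Omega)$; that part is fine. The gap is in the counterexample. Your construction rests on the claim that on the curve $2\mu^2=\alpha(\alpha+1)|\xi|^\alpha$ the resonance function is ``stationary to higher order along the degenerate direction of $D^2\omega$,'' so that the low-frequency slab $A_2$ can be stretched in that direction while keeping $|t\,\Omega|\lesssim 1$. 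This is false for $1\le\alpha<2$. In the low--high regime, $\Omega(\zeta_1,\zeta_2)=\nabla\omega(\zeta_2)\cdot\zeta_1+O(|\zeta_1|^2)$ up to the lower-order term $\omega(\zeta_1)$, so stationarity along the degenerate direction requires $\nabla\omega(\zeta_2)\perp\ker D^2\omega(\zeta_2)$. But for $\zeta_2=(\xi_2,\mu_2)$ on the critical curve (say $\xi_2,\mu_2>0$, $\mu_2^2=\tfrac{\alpha(\alpha+1)}{2}\xi_2^\alpha$) one has $\ker D^2\omega(\zeta_2)=\mathbb{R}\,(\xi_2,-\mu_2)$ and
\begin{equation*}
\nabla\omega(\zeta_2)\cdot(\xi_2,-\mu_2)=\xi_2\bigl((\alpha+1)\xi_2^{\alpha}-\mu_2^{2}\bigr)=\tfrac{(\alpha+1)(2-\alpha)}{2}\,\xi_2^{1+\alpha},
\end{equation*}
which vanishes only when $\alpha=2$. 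Hence for $\alpha<2$ the resonance varies \emph{linearly} along the degenerate direction, at rate $\sim(2-\alpha)N^{\alpha}$ per unit length, so $A_2$ can have length at most $O(N^{-\alpha}/|t|)$ there --- no better than at a generic base point. The ``anomalously large coherent region'' that drives your lower bound (and the heuristic rate $N^{(4-2\alpha)/3}$) does not exist; indeed, the step you yourself flag as ``the main obstacle'' is precisely the one that fails. Note also the irony: the orthogonality you need holds exactly at $\alpha=2$, where ZK is solvable by contraction and the quadratic bound is true, so even where your geometry is available it provably cannot produce the divergence. (The curve where level sets of $\omega$ have vanishing curvature, $e_\perp^{T}D^2\omega\,e_\perp=0$ with $e_\perp\perp\nabla\omega$, is a \emph{different} curve from $\det D^2\omega=0$ when $\alpha<2$; they coincide only at $\alpha=2$.)

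The paper's proof needs none of this geometry. It uses the classical one-dimensional MST mechanism, lifted trivially to 2D: $\widehat{f_N}$ is a sum of normalized indicators of two rectangles \eqref{defQi}, a low box with $\xi\sim\gamma=N^{-(\alpha+\delta)}$ and a high box with $\xi\sim N$ (weighted by $N^{-\alpha s}$), both with tiny $\mu$-extent $\sim\gamma^{\varepsilon}$ --- so both boxes are far from the critical curve, which plays no role. For the low--high interaction the resonance satisfies $|\Omega|\sim\gamma N^{\alpha}=N^{-\delta}\to 0$ (see \eqref{min phase}), the Duhamel kernel is $\sim|t|$, and the derivative loss $\xi\sim N$ beats the measure of the low box: $\|I_N\|_{E^s}\gtrsim |t|\,N\,\gamma^{(1+\varepsilon)/2}\to\infty$ precisely when $\alpha<2$, uniformly in $s$ because the $N^{-\alpha s}$ normalization of the high box cancels the $E^s$-weight of the output. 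In short: the divergence comes from sending the \emph{low} $x$-frequency to zero, not from degeneracy of the Hessian on the critical curve, and your proposal as written cannot be completed.
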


The rest of the paper is organized as follows: in Section \ref{sec-not}, we introduce the notations, define the function spaces and state some associated properties. In Section \ref{stri-sec}, we derive  Strichartz estimates for free solutions of \eqref{bozk}. In Section \ref{sec-l2bil} we show some $L^2$-bilinear estimates which are used to prove the main short time bilinear estimates in Section \ref{sec-bil} as well as the energy estimates in Section \ref{sec-energy}. Theorem \ref{theo-main} is proved in Section \ref{sec-main}. We conclude the paper with an appendix where we show the ill-posedness result of Theorem \ref{ill-theo}.

\section{Notations and functions spaces}\label{sec-not}
\subsection{Notations}\label{not}
For any positive numbers $a$ and $b$, the notation $a\lesssim b$ means that there exists a positive constant $c$ such that $a\le cb$. By $a\sim b$ we mean that $a\lesssim b$ and $b\lesssim a$.
Moreover, if $\gamma\in\R$, $\gamma+$, respectively $\gamma-$, will denote a number slightly greater, respectively lesser, than $\gamma$.

The Fourier variables of $(t,x,y)$ are denoted $(\tau,\xi,\mu)$. Let $U(t)=e^{t \partial_x (D^\alpha _x-\partial_{yy})}$ be the linear group associated with the free part of \eqref{bozk} and set
\begin{equation}\label{pomega}
\omega(\zeta) = \omega(\xi,\mu)=\xi(|\xi|^\alpha+\mu^2),
\end{equation}
\begin{equation}\label{omega}
\Omega(\zeta_1,\zeta_2) = \omega(\zeta_1+\zeta_2)-\omega(\zeta_1)-\omega(\zeta_2).
\end{equation}
Let $h$ the partial derivatives of $\omega$ with respect to $\xi$ :
$$
h(\xi,\mu) = \partial_\xi\omega(\xi,\mu) = (\alpha+1)|\xi|^\alpha+\mu^2,
$$
We define the set of dyadic numbers $\D = \{2^\ell, \ell\in\mathbb{N}\}$. If $\beta\ge 0$ and $H=2^\ell\in \D$, we will denote by $\lfloor H^\beta\rfloor$ the dyadic number such that $\lfloor H^\beta\rfloor\le H^\beta< 2\lfloor H^\beta\rfloor$. In other words we set $\lfloor H^\beta\rfloor = 2^{[\beta k]}$ where $[\cdot]$ is the integer part.

Let $\chi\in C^\infty_0$ satisfies $0\le \chi\le 1$, $\chi=1$ on $[-4/3, 4/3]$ and $\chi(\xi)=0$ for $|\xi|>5/3$.
Let $\varphi(\xi)=\chi(\xi)-\chi(2\xi)$ and for any $N\in\D\setminus\{1\}$, define $\varphi_N(\xi)=\varphi(\xi/N)$ and $\varphi_1=\chi$.
For $H,N\in\D$, we consider the Fourier multipliers $P_N^x$ and $P_H$ defined as
$$
\F(P_N^xu)(\tau,\xi,\mu) = \varphi_N(\xi)\F u(\tau,\xi,\mu),
$$
$$
\F(P_Hu)(\tau,\xi,\mu) = \psi_H(\xi,\mu)\F u(\tau,\xi,\mu),
$$
with $\psi_H(\xi,\mu)=\varphi_H(|\xi|^\alpha+\mu^2)$.

If $A\subset\R^2$, we denote by $P_A=\F^{-1}1_A\F$ the Fourier projection on $A$.

For $N,H\in \D\setminus\{1\}$, let us define
$$
I_N = \{\xi : \frac N2 \le |\xi|\le 2N\}, \quad I_1 = [-2,2],
$$
and
$$
\Delta_H=\{(\xi,\mu) : h(\xi,\mu)\in I_H\}\;.
$$
We also define $P_{\lesssim H}=\sum_{H_1\lesssim H}P_{H_1}$, $P_{\gg H}=Id-P_{\lesssim H}$ and $P_{\sim H}=Id-P_{\lesssim H}-P_{\gg H}$. We will use similarly the notation
$\varphi _{\leq}$, $\varphi_{\geq}$ ...

Let $\eta:\R^4\to\mathbb{C}$ be a bounded measurable function. We define the pseudo-product operator $\Pi_\eta$ on $\S(\R^2)^2$ by
$$
\F(\Pi_\eta(f,g))(\zeta) = \int_{\zeta=\zeta_1+\zeta_2} \eta(\zeta_1,\zeta_2)\widehat{f}(\zeta_1) \widehat{g}(\zeta_2).
$$
This bilinear operator enjoys the symmetry property
\begin{equation}\label{symPi}
  \int_{\R^2} \Pi_\eta(f,g)h = \int_{\R^2} f\ \Pi_{\eta_1}(g,h) = \int_{\R^2} \Pi_{\eta_2}(f,h)g
\end{equation}
with $\eta_1(\zeta_1,\zeta_2)= \overline{\eta}(\zeta_1+\zeta_2,-\zeta_2)$ and $\eta_2(\zeta_1,\zeta_2)=\overline{\eta}(\zeta_1+\zeta_2,-\zeta_1)$ for any real-valued functions $f,g,h\in\S(\R^2)$.
This operator behaves like a product in the sense that it satisfies
$$
\Pi_\eta(f,g) = fg \textrm{ if } \eta\equiv 1,
$$
\begin{equation}\label{PiPart}
\partial \Pi_\eta(f,g) = \Pi_\eta(\partial f, g) + \Pi_\eta(f,\partial g),
\end{equation}
for any $f,g\in \mathcal{S}(\R^2)$ where $\partial$ holds for $\partial_x$ or $\partial_y$. Moreover, if $f_i\in L^2(\R^2)$, $i=1,2,3$ are localized in $\Delta_{H_i}$ for some $H_i\in\D$, then
\begin{equation}\label{PiEst}
\left|\int_{\R^2} \Pi_\eta(f_1,f_2)f_3\right|\lesssim H_{min}^{\frac{1}{2\alpha}+\frac 14} \prod_{i=1}^3 \|f_i\|_{L^2(\R^2)}.
\end{equation}
Estimate \eqref{PiEst} follows from \eqref{symPi}, Plancherel's theorem and the fact that $\|\psi_H\|_{L^2(\R^2)}^2 \sim H^{\frac 1\alpha + \frac 12}$ for any $H\in \D$.

\subsection{Function spaces}\label{spaces}
If $\phi\in L^2(\R^3)$ is supported in $\R\times \Delta_H$ for $H\in\D$, the space $X_H$ is defined by the norm
$$
\|\phi\|_{X_H}  = \sum_{L\in\D} L^{1/2}\|\varphi_L(\tau-\omega(\zeta))\phi(\tau,\zeta)\|_{L^2_{\tau\zeta}}.
$$
For a function $f\in L^2(\R^3)$ such that $\F(f)$ is supported in $\R\times \Delta_H$ for $H\in\D$, we introduce the Bourgain's space $F_H$ localized in short time intervals of length $H^{-\beta}$ where $\beta$ is fixed to
$$
\beta = \frac 2\alpha-1\ge 0,
$$
defined by the norm
\begin{equation}\label{def.FN}
\|f\|_{F_H} = \sup_{t_H\in\R} \|\F(\varphi_1(H^{\beta}(\cdot-t_H))f)\|_{X_H}.
\end{equation}
Its dual version $\N_H$ is defined by the norm
\begin{equation}\label{def.NH}
\|f\|_{\N_H} = \sup_{t_H\in\R} \| (\tau-\omega(\zeta)+iH^{\beta})^{-1}\cdot \F(\varphi_1(H^{\beta}(\cdot-t_H))f)\|_{X_H}.
\end{equation}
Now if $s\ge 0$, we define the global $F^s$ and $\N^s$ spaces from their frequency localized version $F_H$ and $\N_H$ by using a nonhomogeneous Littlewood-Paley decomposition as follows
$$
\|f\|_{F^s}^2 = \sum_{H\in\D} H^{2s} \|P_H f\|_{F_H}^2 \,
$$
$$
\|f\|_{\N^s}^2 = \sum_{H\in\D} H^{2s} \|P_H f\|_{\N_H}^2.
$$
We define next a time localized version of those spaces. For $T>0$ and $Y=F^s$ or $Y=\N^s$, the space $Y(T)$ is defined by its norm
$$
\|f\|_{Y(T)} = \inf \{ \|\widetilde{f}\|_{Y} : \widetilde{f} : \R^3\to\R\text{ and } \widetilde{f}|_{[-T,T]\times\R^2} = f\}.
$$
For $s\ge 0$ and $T>0$ we define the Banach spaces for the initial data $E^s$ by
$$
\|\phi\|_{E^s} = \|\cro{h(\xi,\mu)}^s\cdot \hat{\phi}\|_{L^2_{\xi,\mu}},
$$
and their intersections are denoted by $E^\infty=\bigcap_{s\ge 0}E^s$. Finally, the associated energy spaces $B^s(T)$ are endowed with norm
$$
\|f\|_{B^s(T)}^2 = \|P_1f(0,\cdot)\|_{L^2_{xy}}^2 + \sum_{H\in\D\setminus\{1\}} H^{2s} \sup_{t_H\in[-T,T]} \|P_Hf(t_H,\cdot)\|_{L^2_{xy}}^2.
$$

\subsection{Properties of the function spaces}
In this section, we state without proof some important results related to the short time function spaces introduced in the previous section. They all have been proved in different contexts in \cite{IKT}-\cite{KP}-\cite{G}.

The $F^s(T)$ and $\N^s(T)$ spaces enjoy the following linear properties.
\begin{lemma}\label{LinftyEF}
  Let $T>0$ and $s\ge 0$. Then it holds that
  \begin{equation}
    \|f\|_{L^\infty_TE^s}\lesssim \|f\|_{F^s(T)}
  \end{equation}
  for all $f\in F^s(T)$.
\end{lemma}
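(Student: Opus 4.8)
The plan is to peel off the time restriction, reduce to a single dyadic frequency block, and then exploit the $\ell^1$-type summation over modulations built into the $X_H$ norm. First I would fix an extension $\widetilde{f}:\R^3\to\R$ of $f$ with $\|\widetilde{f}\|_{F^s}\le 2\|f\|_{F^s(T)}$ (which exists by definition of the restriction norm), so that $P_H\widetilde{f}$ extends $P_Hf$ on $[-T,T]\times\R^2$. Since $\cro{h(\zeta)}^s\sim H^s$ on the finitely overlapping supports of the $\psi_H$ (using $|\xi|^\alpha+\mu^2\sim h(\xi,\mu)$), a Littlewood-Paley decomposition gives $\|g(t)\|_{E^s}^2\sim\sum_{H\in\D}H^{2s}\|P_Hg(t)\|_{L^2_{xy}}^2$ for each fixed $t$. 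Using the elementary bound $\sup_t\sum_H\le\sum_H\sup_t$, I obtain
\[
\|f\|_{L^\infty_TE^s}^2\lesssim\sum_{H\in\D}H^{2s}\sup_{t\in[-T,T]}\|P_H f(t)\|_{L^2_{xy}}^2\le\sum_{H\in\D}H^{2s}\sup_{t\in\R}\|P_H\widetilde{f}(t)\|_{L^2_{xy}}^2.
\]
In view of the definition of $\|\cdot\|_{F^s}$, it then suffices to prove the single-block embedding $\|g\|_{L^\infty_tL^2_{xy}}\lesssim\|g\|_{F_H}$ for every $g$ with $\F(g)$ supported in $\R\times\Delta_H$, and apply it to $g=P_H\widetilde{f}$.

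The heart of the matter is the auxiliary estimate $\|h\|_{L^\infty_tL^2_{xy}}\lesssim\|\F(h)\|_{X_H}$ for any $h$ frequency-localized in $\Delta_H$. To see this I would write, via Fourier inversion in $t$ and Plancherel in $(x,y)$, $\|h(t,\cdot)\|_{L^2_{xy}}=\|\int_\R e^{it\tau}\F(h)(\tau,\cdot)\,d\tau\|_{L^2_\zeta}$, decompose $\F(h)=\sum_L\varphi_L(\tau-\omega(\zeta))\F(h)$ dyadically in the modulation variable, and on each piece bound the $\tau$-integral by Cauchy-Schwarz, using that its support in $\tau$ has length $\lesssim L$. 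This produces the gain $L^{1/2}\|\varphi_L(\tau-\omega)\F(h)\|_{L^2}$, \emph{uniformly in} $t$; summing over $L$ reproduces exactly $\|\F(h)\|_{X_H}$. This is where the $\ell^1$ summation $\sum_L L^{1/2}(\cdots)$ in the definition of $X_H$ (rather than an $\ell^2$ one) is essential: it is precisely this Besov-type structure that yields an embedding into $L^\infty_tL^2_{xy}$ (and in fact into $C(\R;L^2_{xy})$), whereas the plain $X^{s,1/2}$ norm would not.

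Finally I would upgrade this to $F_H$ by localizing in time around an arbitrary point. Since $\varphi_1\equiv 1$ on $[-4/3,4/3]$, for any $t_0\in\R$ the function $h_{t_0}:=\varphi_1(H^{\beta}(\cdot-t_0))g$ satisfies $h_{t_0}(t_0,\cdot)=g(t_0,\cdot)$ and remains localized in $\Delta_H$, whence
\[
\|g(t_0,\cdot)\|_{L^2_{xy}}=\|h_{t_0}(t_0,\cdot)\|_{L^2_{xy}}\le\|h_{t_0}\|_{L^\infty_tL^2_{xy}}\lesssim\|\F(h_{t_0})\|_{X_H}\le\|g\|_{F_H}.
\]
Taking the supremum over $t_0$ yields the single-block estimate and closes the argument. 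I expect no serious obstacle, the proof being essentially soft; the only points demanding care are the uniformity in $t$ of the auxiliary estimate (guaranteed by the $\ell^1$ modulation sum) and the compatibility of the frequency support of $P_H$ with the $\Delta_H$-localization required by $X_H$ and $F_H$, which holds up to harmless finite overlaps thanks to the equivalence $|\xi|^\alpha+\mu^2\sim h(\xi,\mu)$.
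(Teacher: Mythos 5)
Your proof is correct. The paper states Lemma \ref{LinftyEF} without proof, deferring to \cite{IKT}, \cite{KP} and \cite{G}, and your argument is precisely the standard one from those references: reduce to a single frequency block, establish the embedding $X_H\hookrightarrow L^\infty_t L^2_{xy}$ via the $\ell^1$ (Besov-type) summation over modulations together with Cauchy--Schwarz in $\tau$, and then pass to $F_H$ by inserting the time cutoff $\varphi_1(H^\beta(\cdot-t_0))$, which equals $1$ at the arbitrary time $t_0$.
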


\begin{proposition}
  Assume $T\in (0,1]$ and $s\ge 0$. Then we have that
 \begin{equation}\label{FBN}
   \|u\|_{F^s(T)} \lesssim \|u\|_{B^s(T)} + \|f\|_{\N^s(T)}
 \end{equation}
 for all $u\in B^s(T)$ and $f\in \N^s(T)$ satisfying
 $$
 \partial_tu + D^\alpha_x\partial_xu+\partial_{xyy}u = f \textrm{ on } [-T,T]\times\R^2.
 $$
\end{proposition}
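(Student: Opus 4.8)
The plan is to produce a good global extension and then reduce the estimate to a frequency-by-frequency application of Duhamel's formula together with the standard short-time linear estimates. First I fix an extension $\tilde f$ of $f$ to $\R^3$ with $\|\tilde f\|_{\N^s}\le 2\|f\|_{\N^s(T)}$, and I extend $u$ by propagating the equation past the endpoints: set $\tilde u=u$ on $[-T,T]$ and, for $t>T$, $\tilde u(t)=U(t-T)u(T,\cdot)+\int_T^t U(t-t')\tilde f(t')\,dt'$ (symmetrically for $t<-T$), where $U(t)=e^{t\partial_x(D^\alpha_x-\partial_{yy})}$. Then $\tilde u$ restricts to $u$ on $[-T,T]$ and solves $\partial_t\tilde u=\partial_x(D^\alpha_x-\partial_{yy})\tilde u+\tilde f$ on all of $\R\times\R^2$. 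Since $P_H$ commutes with $U(t)$, and since $\|u\|_{F^s(T)}\le\|\tilde u\|_{F^s}$, $\|\tilde u\|_{F^s}^2=\sum_H H^{2s}\|P_H\tilde u\|_{F_H}^2$ and $\sum_H H^{2s}\|P_H\tilde f\|_{\N_H}^2\lesssim\|f\|_{\N^s(T)}^2$, it suffices to bound $\|P_H\tilde u\|_{F_H}$ by the right-hand side for each $H\in\D$ and then square, weight by $H^{2s}$ and sum.

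Fix $H$ and write $v=P_H\tilde u$, $g=P_H\tilde f$, so that $v$ solves $\partial_tv=\partial_x(D^\alpha_x-\partial_{yy})v+g$ globally. By definition $\|v\|_{F_H}=\sup_{t_H}\|\F(\varphi_1(H^\beta(\cdot-t_H))v)\|_{X_H}$ is a supremum over temporal windows of length $\sim H^{-\beta}$. For each such window I fix a reference time $t^*$, chosen in $[-T,T]$ whenever the window meets the extension region, and use Duhamel's formula
$$
v(t)=U(t-t^*)v(t^*)+\int_{t^*}^t U(t-t')g(t')\,dt'.
$$
It then remains to estimate, in $X_H$, the window-localization of each of the two terms. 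The free part satisfies $\|\F(\varphi_1(H^\beta(\cdot-t_H))U(\cdot-t^*)v(t^*))\|_{X_H}\lesssim\|v(t^*)\|_{L^2_{xy}}$: the cutoff of width $H^{-\beta}$ has Fourier transform that decays rapidly away from $|\tau-\omega(\zeta)|\lesssim H^\beta$, so the defining sum $\sum_L L^{1/2}\|\varphi_L(\tau-\omega)\,\cdot\,\|_{L^2}$ of the $X_H$-norm is convergent and of size $O(1)$; by construction $\|v(t^*)\|_{L^2_{xy}}\le\sup_{t_H\in[-T,T]}\|P_Hu(t_H,\cdot)\|_{L^2_{xy}}$, which is the energy contribution (the block $H=1$, for which the window length is $\sim1$, being handled through the value at $t=0$). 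The forced part satisfies the short-time Duhamel bound $\|\int_{t^*}^{\cdot}U(\cdot-t')g\,dt'\|_{F_H}\lesssim\|g\|_{\N_H}$, in which the regularization $(\tau-\omega(\zeta)+iH^\beta)^{-1}$ built into the definition of $\N_H$ is exactly the symbol needed to invert $\partial_t-i\omega(D)$ at the modulation scale $H^\beta$ dictated by the window length. These two are the linear building blocks of the short-time calculus proved in \cite{IKT}-\cite{KP}-\cite{G}. Taking the supremum over windows $t_H$ and summing in $H$ with the weight $H^{2s}$ yields \eqref{FBN}.

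The main obstacle lies in the Duhamel estimate rather than in the essentially computational free-solution bound. The retarded integral conceals a sharp temporal cutoff $\mathbf 1_{[t^*,t]}(t')$; to realize the operator as multiplication by $(\tau-\omega+iH^\beta)^{-1}$ one must split this cutoff into smooth pieces and absorb the boundary term coming from the free flow, while checking that the sum over modulations $L$ in the $X_H$-norm produces no logarithmic loss. One must also verify that the flow-based extension introduces no spurious growth near $t=\pm T$ as the supremum ranges over \emph{all} windows $t_H$, which is precisely why the window length is taken to be $H^{-\beta}$ with $\beta=\frac2\alpha-1$: this choice matches the dispersion and keeps every constant in the two linear estimates independent of $H$. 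These verifications are carried out exactly as in the short-time Bourgain space theory of \cite{IKT}.
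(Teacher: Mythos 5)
Your overall scheme (near-optimal extensions, frequency-by-frequency reduction, per-window Duhamel, homogeneous bound plus inhomogeneous bound) is the standard one from \cite{IKT}-\cite{KP}-\cite{G}, which is exactly what the paper invokes (it states this proposition without proof). But your choice of extension of $u$ breaks the argument. You extend by the \emph{forced} flow, $\widetilde{u}(t)=U(t-T)u(T)+\int_T^tU(t-t')\widetilde{f}(t')\,dt'$ for $t>T$. The norm $\|P_H\widetilde{u}\|_{F_H}$ defined in \eqref{def.FN} is a supremum over \emph{all} windows $t_H\in\R$, including windows arbitrarily far from $[-T,T]$, and on those windows the Duhamel term accumulates the forcing over $\sim|t_H-T|H^{\beta}$ windows with no gain. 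Concretely, take $\widetilde{f}$ resonant: $\widetilde{f}(t)=U(t)\phi$ with $\widehat{\phi}$ supported in $\Delta_H$. A direct computation gives $\|\widetilde{f}\|_{\N_H}\sim H^{-\beta}\|\phi\|_{L^2}<\infty$, while your extension becomes $\widetilde{u}(t)=U(t)\bigl(u(0)+t\phi\bigr)$, whose window-localized $X_H$ norm near $t_H$ is $\gtrsim |t_H|\,\|\phi\|_{L^2}\to\infty$. So the frequency-localized inequality your proof hinges on, namely $\|P_H\widetilde{u}\|_{F_H}\lesssim \sup_{t\in[-T,T]}\|P_Hu(t)\|_{L^2}+\|P_H\widetilde{f}\|_{\N_H}$ for \emph{your} $\widetilde{u}$, is false: the left side can be infinite while the right side is finite. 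Even if you force $\widetilde{f}$ to be compactly supported in time, the forced-flow extension still loses a factor $H^{\beta}$ (the number of windows per unit time), which destroys the uniformity in $H$. Your closing paragraph, which worries about behavior near $t=\pm T$ and logarithmic losses in the modulation sum, misdiagnoses where the difficulty lies.

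The standard construction avoids this entirely: one extends each frequency block by the \emph{free} flow, $\widetilde{u}_H(t)=1_{t<-T}\,U(t+T)P_Hu(-T)+1_{[-T,T]}(t)\,P_Hu(t)+1_{t>T}\,U(t-T)P_Hu(T)$. This extension solves $\partial_t\widetilde{u}_H+D^\alpha_x\partial_x\widetilde{u}_H+\partial_{xyy}\widetilde{u}_H=1_{[-T,T]}(t)\,P_H\widetilde{f}$ globally; windows far from $[-T,T]$ see pure free evolution of an $L^2$ profile controlled by the $B^s(T)$ norm (unitarity of $U$), and windows meeting $[-T,T]$ see a Duhamel term whose forcing is supported in $[-T,T]$, hence controlled by $\|P_H\widetilde{f}\|_{\N_H}$. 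The price is the sharp cutoff $1_{[-T,T]}$, and that is precisely what Lemma \ref{techX} is designed to absorb (only the supremum in $L$, not the $\ell^1$ sum, survives a sharp time truncation). If you insist on a forced-flow extension, you would first have to replace $\widetilde{f}$, block by block, by a version cut off smoothly at time scale $H^{-\beta}$ beyond $[-T,T]$ and prove such cutoffs are bounded on $\N_H$; as written, your proof has a genuine gap.
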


We will also need the following technical results.
\begin{lemma}\label{fond}
  Let $H,H_1\in\D$ be given. Then it holds that
  $$
  H^{\beta/2} \left\| \varphi_{\le \lfloor H^\beta\rfloor}(\tau-\omega(\zeta) \int_\R |\phi(\tau',\zeta) H^{-\beta}(1+H^{-\beta}|\tau-\tau'|)^{-4}d\tau'\right\|_{L^2_{\tau\zeta}} \lesssim \|\phi\|_{X_{H_1}}
  $$
  and
  $$
  \sum_{L> \lfloor H^\beta\rfloor} L^{1/2} \left\| \varphi_{L}(\tau-\omega(\zeta) \int_\R |\phi(\tau',\zeta) H^{-\beta}(1+H^{-\beta}|\tau-\tau'|)^{-4}d\tau'\right\|_{L^2_{\tau\zeta}} \lesssim \|\phi\|_{X_{H_1}}
  $$
  for all $\phi\in F_{H_1}$.
\end{lemma}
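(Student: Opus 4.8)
The plan is to reduce both estimates to one-dimensional convolution bounds in the $\tau$ variable. For fixed $\zeta$, set
$$
K(\tau) = H^{-\beta}(1+H^{-\beta}|\tau|)^{-4},
$$
so that the quantity to be estimated is $\Psi(\tau,\zeta) = (K\ast_\tau |\phi|)(\tau,\zeta)$, the convolution being taken in $\tau$ only. A change of variables $u=H^{-\beta}\tau$ shows that $K$ is an $L^1$-normalized bump concentrated at scale $H^\beta$: one has $\|K\|_{L^1_\tau}\sim 1$, $\|K\|_{L^2_\tau}\sim H^{-\beta/2}$, and the pointwise decay $K(\tau)\lesssim H^{3\beta}|\tau|^{-4}$ whenever $|\tau|\gtrsim H^\beta$. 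These three facts are the only properties of the kernel we shall use.

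Next I would decompose $|\phi|$ according to its distance to the characteristic surface by writing $|\phi|=\sum_{L_1\in\D}g_{L_1}$ with $g_{L_1}=\varphi_{L_1}(\tau-\omega(\zeta))|\phi|\ge 0$, so that $g_{L_1}$ is supported in $\{|\tau-\omega(\zeta)|\sim L_1\}$ and $\|g_{L_1}\|_{L^2_{\tau\zeta}}=\|\varphi_{L_1}(\tau-\omega(\zeta))\phi\|_{L^2_{\tau\zeta}}$ is exactly the $L_1$-piece entering $\|\phi\|_{X_{H_1}}$. With this notation I would establish three convolution estimates for the pieces $\varphi_L(\tau-\omega)(K\ast g_{L_1})$: (i) the trivial Young bound $\|K\ast g_{L_1}\|_{L^2}\lesssim\|g_{L_1}\|_{L^2}$, used only on the diagonal $L\sim L_1$; (ii) a low-modulation gain $\|K\ast g_{L_1}\|_{L^2}\lesssim L_1^{1/2}H^{-\beta/2}\|g_{L_1}\|_{L^2}$ valid for $L_1\lesssim H^\beta$, obtained by noting that $K(\tau-\tau')\sim K(\tau-\omega)$ uniformly over the support of $g_{L_1}$, whence $K\ast g_{L_1}\sim K(\cdot-\omega)\|g_{L_1}\|_{L^1}$ and $\|g_{L_1}\|_{L^1}\lesssim L_1^{1/2}\|g_{L_1}\|_{L^2}$; and (iii) an off-diagonal decay bound $\|\varphi_L(\tau-\omega)(K\ast g_{L_1})\|_{L^2}\lesssim L^{1/2}L_1^{1/2}H^{3\beta}M^{-4}\|g_{L_1}\|_{L^2}$ with $M=\max(L,L_1)$, valid when $L$ and $L_1$ are dyadically separated, obtained from the pointwise decay $K(\tau-\tau')\lesssim H^{3\beta}M^{-4}$ (since then $|\tau-\tau'|\gtrsim M$) together with Cauchy--Schwarz in $\tau'$ and the measure of the output region.

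It then remains to insert these three bounds into the two target sums and perform the dyadic summation. For the first inequality I would split the sum over $L_1$ at the threshold $\lfloor H^\beta\rfloor$: for $L_1\le\lfloor H^\beta\rfloor$ estimate (ii) turns the prefactor $H^{\beta/2}$ into exactly $L_1^{1/2}$, giving $\sum_{L_1\le\lfloor H^\beta\rfloor}L_1^{1/2}\|g_{L_1}\|_{L^2}\le\|\phi\|_{X_{H_1}}$, while for $L_1>\lfloor H^\beta\rfloor$ the restriction of the output to modulation $\lesssim H^\beta$ forces the off-diagonal regime, and (iii) produces a factor $H^{4\beta}L_1^{-4}\le 1$ with a convergent geometric tail. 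For the second inequality the diagonal contribution $L_1\sim L$ is handled by (i) and reproduces $\|\phi\|_{X_{H_1}}$ directly, whereas the off-diagonal contributions (both $L_1\le\lfloor H^\beta\rfloor$ via (ii)--(iii) and $L_1>\lfloor H^\beta\rfloor$ via (iii)) reduce, after summing the geometric series $\sum_L L\,M^{-4}\sim L_1^{-3}$, to sums of the form $\sum_{L_1}H^{3\beta}L_1^{-5/2}\|g_{L_1}\|_{L^2}\le\sum_{L_1}L_1^{1/2}\|g_{L_1}\|_{L^2}$.

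I expect the main obstacle to be estimate (ii) and its bookkeeping: the entire first inequality hinges on the fact that the $L^2$-mass of $K\ast g_{L_1}$ gains precisely the factor $H^{-\beta/2}$ when the input sits at modulation below the kernel scale, and this is exactly what cancels the $H^{\beta/2}$ prefactor. The remaining work is purely a matter of checking that, in each off-diagonal regime, the powers of $H^\beta$ and of $L,L_1$ arrange themselves so that all dyadic sums are dominated by $\sum_{L_1}L_1^{1/2}\|g_{L_1}\|_{L^2}=\|\phi\|_{X_{H_1}}$; no cancellation is needed, only convergence of geometric series near the threshold $H^\beta$.
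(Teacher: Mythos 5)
The paper itself does not prove this lemma: it is quoted in Section~\ref{spaces} with the remark that such results ``have been proved in different contexts in \cite{IKT}-\cite{KP}-\cite{G}''. Your argument is essentially the standard proof from those references: decompose $|\phi|$ into modulation pieces $g_{L_1}$, record that the kernel $K$ is an $L^1$-normalized bump at scale $H^\beta$ with $\|K\|_{L^2_\tau}\sim H^{-\beta/2}$ and polynomial tails, and run the dyadic bookkeeping. Your three convolution bounds are correct for each fixed $\zeta$ (and uniform in $\zeta$, so they pass to $L^2_{\tau\zeta}$); in particular (ii) is just Young's inequality $\|K\ast g_{L_1}\|_{L^2}\le\|K\|_{L^2}\|g_{L_1}\|_{L^1}$ combined with Cauchy--Schwarz on the support of $g_{L_1}$, and the power counting in both summations checks out: $H^{\beta/2}\cdot H^{-\beta/2}L_1^{1/2}=L_1^{1/2}$ for the low pieces, the factor $H^{4\beta}L_1^{-4}\le 1$ for the tail of the first estimate, and $H^{3\beta}\max(L_1,H^\beta)^{-3}\lesssim 1$ after summing $\sum_L L\,M^{-4}$ in the second.

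One small patch is needed at the threshold. In the first estimate your dichotomy is ``$L_1\le\lfloor H^\beta\rfloor$: use (ii)'' versus ``$L_1>\lfloor H^\beta\rfloor$: the off-diagonal regime is forced, use (iii)''. For the first dyadic value above the threshold (e.g.\ $L_1=2\lfloor H^\beta\rfloor$) the support of $\varphi_{\le\lfloor H^\beta\rfloor}(\tau-\omega)$, namely $|\tau-\omega|\le\frac53\lfloor H^\beta\rfloor$, overlaps the support of $g_{L_1}$, namely $|\tau'-\omega|\in[\frac43\lfloor H^\beta\rfloor,\frac{10}{3}\lfloor H^\beta\rfloor]$, so $|\tau-\tau'|$ can vanish and (iii) does not apply. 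These $O(1)$ terms are disposed of instantly by (i) (or by (ii), which via Young holds for every $L_1$): $H^{\beta/2}\|K\ast g_{L_1}\|_{L^2}\lesssim H^{\beta/2}\|g_{L_1}\|_{L^2}\sim L_1^{1/2}\|g_{L_1}\|_{L^2}$ since $L_1\sim H^\beta$ there. The same remark applies to the second estimate, where the ``diagonal'' handled by (i) must be understood to include neighbouring dyadic values of $L$ and $L_1$, not only $L=L_1$. With that convention your proof is complete.
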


\begin{corollary}\label{cor-fond}
  Let $\widetilde{t}\in\R$ and $H,H_1\in\D$ be such that $H\gg H_1$. Then it holds that
  $$
  H^{\beta/2} \|\varphi_{\le \lfloor H^\beta\rfloor}\F(\varphi_1(H^\beta(\cdot-\widetilde{t}))f)\|_{L^2_{\tau\zeta}} \lesssim \|f\|_{F_{H_1}}
  $$
  and
  $$
  \sum_{L>\lfloor H^\beta\rfloor} L^{1/2} \|\varphi_{L}\F(\varphi_1(H^\beta(\cdot-\widetilde{t}))f)\|_{L^2_{\tau\zeta}} \lesssim \|f\|_{F_{H_1}}
  $$
  for all $f\in F_{H_1}$.
\end{corollary}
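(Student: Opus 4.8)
The plan is to deduce both inequalities directly from Lemma \ref{fond} by transferring the time localization $\varphi_1(H^\beta(\cdot-\widetilde{t}))$ to the Fourier side, where it becomes a convolution against the very kernel appearing in that lemma, and then by enlarging the cutoff so that the resulting $X_{H_1}$-norm is controlled by $\|f\|_{F_{H_1}}$.

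First I would compute the Fourier transform of the time weight $g(t):=\varphi_1(H^\beta(t-\widetilde{t}))$. A change of variables gives $\widehat{g}(\tau)=H^{-\beta}e^{-i\tau\widetilde{t}}\,\widehat{\varphi_1}(H^{-\beta}\tau)$, and since $\varphi_1\in C_0^\infty$ its transform is Schwartz; in particular
$$
|\widehat{g}(\tau)|\lesssim H^{-\beta}\big(1+H^{-\beta}|\tau|\big)^{-4},
$$
which is exactly the convolution kernel in Lemma \ref{fond}.

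The key step is then to fatten the cutoff. Since $\beta\ge 0$ and $H\gg H_1$, the support $\{|t-\widetilde{t}|\le \tfrac 53 H^{-\beta}\}$ of $g$ lies inside the region where $\varphi_1(H_1^\beta(t-\widetilde{t}))\equiv 1$, so that
$$
\varphi_1(H^\beta(\cdot-\widetilde{t}))\,f=\varphi_1(H^\beta(\cdot-\widetilde{t}))\cdot\big[\varphi_1(H_1^\beta(\cdot-\widetilde{t}))\,f\big].
$$
Writing $\phi:=\F(\varphi_1(H_1^\beta(\cdot-\widetilde{t}))f)$ and using that multiplication by $g$ becomes a convolution in the $\tau$ variable only for the space-time Fourier transform, we obtain the pointwise bound
$$
\big|\F(\varphi_1(H^\beta(\cdot-\widetilde{t}))f)(\tau,\zeta)\big|\lesssim \int_\R H^{-\beta}\big(1+H^{-\beta}|\tau-\tau'|\big)^{-4}\,|\phi(\tau',\zeta)|\,d\tau'.
$$

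It then remains to apply Lemma \ref{fond} to this $\phi$. Multiplying by $\varphi_{\le\lfloor H^\beta\rfloor}(\tau-\omega(\zeta))$ and taking the $L^2_{\tau\zeta}$ norm, its first estimate produces the first inequality of the corollary with right-hand side $\|\phi\|_{X_{H_1}}$, while its second estimate handles the sum over $L>\lfloor H^\beta\rfloor$ identically. Finally, by the very definition of $\|\cdot\|_{F_{H_1}}$ as a supremum over all time centers, choosing $t_{H_1}=\widetilde{t}$ yields $\|\phi\|_{X_{H_1}}\le\|f\|_{F_{H_1}}$, which closes the argument. The only genuinely delicate point is this fattening step: it is what lets one trade the $X_{H_1}$-norm furnished by Lemma \ref{fond} for the desired $F_{H_1}$-norm, and the required support inclusion is precisely what the hypothesis $H\gg H_1$ guarantees.
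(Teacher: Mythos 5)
Your strategy---turning the time cutoff into a $\tau$-convolution against the kernel of Lemma \ref{fond}, applying that lemma, and then choosing $t_{H_1}=\widetilde{t}$ in the supremum defining $F_{H_1}$---is the intended mechanism (the paper itself leaves the corollary unproved, as a direct consequence of Lemma \ref{fond}). But the step you yourself single out as the crux, the fattening identity, is exactly where the argument breaks. The support of $\varphi_1(H^\beta(\cdot-\widetilde{t}))$ is $\{|t-\widetilde{t}|\le\frac53 H^{-\beta}\}$, while $\varphi_1(H_1^\beta(\cdot-\widetilde{t}))$ is only guaranteed to equal $1$ on $\{|t-\widetilde{t}|\le\frac43 H_1^{-\beta}\}$; your inclusion therefore requires $\frac53 H^{-\beta}\le\frac43 H_1^{-\beta}$, i.e. $(H/H_1)^\beta\ge\frac54$. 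This is not a consequence of $H\gg H_1$, because $\beta=\frac2\alpha-1$ is not bounded away from $0$ on the paper's range $1\le\alpha\le2$. At $\alpha=2$ (the ZK case, explicitly covered) one has $\beta=0$, both cutoffs are the same function $\varphi_1(\cdot-\widetilde{t})$, and your identity would assert $\varphi_1=\varphi_1^2$, which is false on the transition region $\frac43\le|t-\widetilde{t}|\le\frac53$; and for $\alpha$ close to $2$ the inequality $(H/H_1)^\beta\ge\frac54$ can fail for admissible pairs (e.g. $H/H_1$ equal to the threshold constant), whatever fixed constant the symbol $\gg$ encodes, since $\beta\to0$. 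In that whole regime the pointwise convolution bound you deduce for $\F(\varphi_1(H^\beta(\cdot-\widetilde{t}))f)$, and hence both inequalities, are unproved.

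The standard repair---and the device this paper uses in the analogous situations, namely the proofs of estimate \eqref{hhl.0} and of Lemma \ref{estI}---is a partition of unity at scale $H_1^{-\beta}$ rather than a single fattened cutoff. Take $\gamma\in C_0^\infty(\R)$ supported in $[-1,1]$ with $\sum_{m\in\Z}\gamma(x-m)\equiv1$, and write $\varphi_1(H^\beta(t-\widetilde{t}))f=\sum_m\varphi_1(H^\beta(t-\widetilde{t}))\,\gamma(H_1^\beta(t-\widetilde{t})-m)f$; since $\beta\ge0$ and $H\ge H_1$ give $H^{-\beta}\le H_1^{-\beta}$, only $O(1)$ values of $m$ contribute. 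Because $\supp\gamma\subset[-1,1]$ and $\varphi_1\equiv1$ on $[-\frac43,\frac43]$, the exact identity $\gamma=\gamma\varphi_1$ does hold, so each piece equals $\varphi_1(H^\beta(\cdot-\widetilde{t}))\,\gamma(H_1^\beta(\cdot-t_m))\,\bigl[\varphi_1(H_1^\beta(\cdot-t_m))f\bigr]$ with $t_m=\widetilde{t}+mH_1^{-\beta}$. Two $\tau$-convolutions now appear: the outer one, with kernel $\lesssim H^{-\beta}(1+H^{-\beta}|\tau|)^{-4}$, is handled by Lemma \ref{fond} exactly as in your write-up; the inner one, with kernel $\lesssim H_1^{-\beta}(1+H_1^{-\beta}|\tau|)^{-4}$, is absorbed by the boundedness of such convolutions on $X_{H_1}$, which follows from the two estimates of Lemma \ref{fond} applied with both dyadic indices equal to $H_1$ (noting that $\sum_{L\le\lfloor H_1^\beta\rfloor}L^{1/2}\|\varphi_L\,\cdot\,\|_{L^2}\lesssim H_1^{\beta/2}\|\varphi_{\le\lfloor H_1^\beta\rfloor}\,\cdot\,\|_{L^2}$). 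Choosing $t_{H_1}=t_m$ in the supremum defining $\|f\|_{F_{H_1}}$ and summing over the finitely many $m$ then yields both inequalities uniformly for all $1\le\alpha\le2$; your argument is recovered verbatim as the special case in which the single translate $m=0$ suffices.
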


\begin{lemma}\label{techX}
  Let $H\in \D$ and $I\subset \R$ an interval. Then
  $$
  \sup_{L\in\D} L^{1/2} \|\varphi_L(\tau-\omega(\zeta)) \F(1_I(t)f)\|_{L^2(\R^3)} \lesssim \|\F(f)\|_{X_H},
  $$
  for all $f$ such that $\F(f)\in X_H$.
\end{lemma}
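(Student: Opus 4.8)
The plan is to realize the sharp time cutoff as a convolution in the $\tau$ variable and to exploit the asymmetry between the $\sup_L$ on the left-hand side and the $\sum_L$ defining $\|\cdot\|_{X_H}$ on the right. Writing $I=[a,b]$, one has $\F(1_I f)(\tau,\zeta)=\int_\R K_0(\tau-\tau')\F(f)(\tau',\zeta)\,d\tau'$, where $K_0=\F_t(1_I)$ is given by $K_0(\rho)=(e^{-ia\rho}-e^{-ib\rho})/(i\rho)$. I shall use only two properties of this kernel: the pointwise decay $|K_0(\rho)|\lesssim\min(|I|,|\rho|^{-1})$, and the fact that, since multiplication by $1_I$ is an $L^2_t$-contraction and $\F_t$ is an isometry, the operator $g\mapsto K_0\ast_\tau g$ (conjugate to multiplication by $1_I$) has operator norm $\le 1$ on $L^2(\R^3)$. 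Because $\omega(\zeta)$ does not depend on $\tau$, I may pass to the modulation variable $\sigma=\tau-\omega(\zeta)$, so that the convolution acts in $\sigma$ for each fixed $\zeta$ and the weights $\varphi_L(\tau-\omega(\zeta))$ become $\varphi_L(\sigma)$.

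Next I decompose $\F(f)=\sum_{L_1\in\D}g_{L_1}$ with $g_{L_1}=\varphi_{L_1}(\sigma)\F(f)$, so that $\|\F(f)\|_{X_H}=\sum_{L_1}L_1^{1/2}\|g_{L_1}\|_{L^2}$, and for a fixed pair $(L,L_1)$ I estimate $L^{1/2}\|\varphi_L(\sigma)(K_0\ast g_{L_1})\|_{L^2}$. In the off-diagonal regimes $L_1\ll L$ and $L_1\gg L$ the supports force $|\sigma-\sigma'|\sim\max(L,L_1)$, hence $|K_0(\sigma-\sigma')|\lesssim\max(L,L_1)^{-1}$ there; a Cauchy--Schwarz in $\sigma'$ over the set $|\sigma'|\sim L_1$, namely using that $\int_{|\sigma'|\sim L_1}|K_0(\sigma-\sigma')|^2\,d\sigma'\lesssim L_1\max(L,L_1)^{-2}$, followed by integration over the set $|\sigma|\sim L$, produces the bound $L^{1/2}\|\varphi_L(\sigma)(K_0\ast g_{L_1})\|_{L^2}\lesssim L_1^{1/2}\|g_{L_1}\|_{L^2}$ for $L_1\ll L$, and the same bound with an extra geometric gain $L/L_1$ for $L_1\gg L$.

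The main obstacle is the diagonal regime $L_1\sim L$, where $|\sigma-\sigma'|$ may be as small as $O(1)$: since $K_0$ lies only in weak-$L^1$, its $L^1$-mass over $|\rho|\lesssim L$ diverges logarithmically, so a Young-type estimate would lose a factor $\log(L|I|)$ depending on both $L$ and $|I|$. This is precisely where I invoke the $L^2$-contraction property instead, bounding $\|\varphi_L(\sigma)(K_0\ast g_{L_1})\|_{L^2}\le\|K_0\ast g_{L_1}\|_{L^2}\le\|g_{L_1}\|_{L^2}$ and hence $L^{1/2}\|\varphi_L(\sigma)(K_0\ast g_{L_1})\|_{L^2}\lesssim L_1^{1/2}\|g_{L_1}\|_{L^2}$; as only boundedly many $L_1$ satisfy $L_1\sim L$, this costs only a constant. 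Collecting the three regimes gives $L^{1/2}\|\varphi_L(\sigma)\F(1_I f)\|_{L^2}\lesssim\sum_{L_1}L_1^{1/2}\|g_{L_1}\|_{L^2}=\|\F(f)\|_{X_H}$ uniformly in $L$, and taking the supremum over $L\in\D$ concludes. The only point requiring slightly more care is the lowest block $L=1$, where $\varphi_1$ is a fat low-frequency cutoff, but the same estimates apply there without change.
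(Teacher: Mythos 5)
Your proof is correct: the paper itself states Lemma \ref{techX} without proof, deferring to \cite{IKT}, \cite{KP} and \cite{G}, and your argument --- realizing the sharp cutoff as convolution with $K_0=\F_t(1_I)$, using the decay $|K_0(\rho)|\lesssim\min(|I|,|\rho|^{-1})$ together with Cauchy--Schwarz in the off-diagonal blocks, and the $L^2$-contraction property of multiplication by $1_I$ in the diagonal blocks $L_1\sim L$ --- is precisely the standard argument appearing in those references (where, if anything, the $L^2$-contraction is used for the whole range $L\lesssim L_1$, making the separate treatment of $L_1\gg L$ unnecessary, since the $\ell^1_{L_1}$ structure of $\|\cdot\|_{X_H}$ never requires the geometric gain $L/L_1$ you obtain there). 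The only cosmetic caveat is that your kernel formula presupposes $I=[a,b]$ bounded; for a half-line one passes to the limit $b\to\infty$, which is immediate because your bound is uniform in $|I|$ and $1_{[a,b]}f\to 1_{[a,\infty)}f$ in $L^2$.
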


\section{Strichartz estimates}\label{stri-sec}

For $1\le \alpha \le 2$ we set $B=\alpha (\alpha +1)/2$, and for $\delta >0$ small enough, let us define
$$
A_\delta = \{(\xi,\mu)\in \R^2: (B-\delta)|\xi|^\alpha \le \mu^2\le (B+\delta)|\xi|^\alpha \}.
$$
We also consider a function $\rho\in C^\infty(\R,[0,1])$ satisfying $\rho=0$ on $[-\frac 12,\frac 12]$ and $\rho(\xi)=1$ for $|\xi|\ge 1$. We set $\rho_\delta(\xi)=\rho(\xi/\delta)$ so that $\rho_\delta(B-\frac{\mu^2}{|\xi|^\alpha })$ is a smooth version of $1_{A_\delta^c}$ in the sense that
\begin{equation}\label{rho}
\forall (\xi,\mu)\in \R^\ast\times\R,\quad \rho_\delta \left(B-\frac{\mu^2}{|\xi|^\alpha }\right)1_{A_\delta^c}(\xi,\mu) = 1_{A_\delta^c}(\xi,\mu),
\end{equation}
and $\rho_\delta \left(B-\frac{\mu^2}{|\xi|^\alpha }\right) = 0$ on $A_{\delta/2}$.
The main result of this section is the following.
\begin{proposition}\label{stri}
Let $N\in\D$ and $\delta\in (0,1)$. Assume that $(p,q)$ satisfies $\frac 1q=\theta\frac {1-\eps}2$ and $\frac 1p=\frac{1-\theta}2$ for some $\theta\in [0,1)$ and  $\eps>0$  small enough. Then it holds that
\begin{equation}\label{main-stri}
\|P_N^xP_{A_\delta^c}U(t)\phi\|_{L^q_tL^p_{xy}}\lesssim N^{\theta(\eps(\alpha +1) -\frac \alpha 4)}\|\phi\|_{L^2}.
\end{equation}
for any $\phi\in L^2(\R^2)$.
\end{proposition}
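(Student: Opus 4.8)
The plan is to reduce \eqref{main-stri} to a pointwise-in-time dispersive estimate for the frequency-localized propagator and then feed it into the classical $TT^\ast$ and interpolation machinery; the role of the cutoff $P_{A_\delta^c}$ is precisely to discard the frequencies at which the phase degenerates. Recall that $\F(U(t)\phi)(\xi,\mu)=e^{it\omega(\xi,\mu)}\hat\phi(\xi,\mu)$ with $\omega(\xi,\mu)=\xi(|\xi|^\alpha+\mu^2)$. Since $\partial_\xi^2\omega=\alpha(\alpha+1)|\xi|^{\alpha-1}\sgn(\xi)$, $\partial_\mu^2\omega=2\xi$ and $\partial_\xi\partial_\mu\omega=2\mu$, we get
\begin{equation*}
\det\mathrm{Hess}\,\omega(\xi,\mu)=2\alpha(\alpha+1)|\xi|^\alpha-4\mu^2=4\big(B|\xi|^\alpha-\mu^2\big).
\end{equation*}
Hence the Hessian of $\omega$ vanishes exactly on the critical curve $2\mu^2=\alpha(\alpha+1)|\xi|^\alpha$, whereas on $A_\delta^c$ one has the lower bound $|\det\mathrm{Hess}\,\omega|\gtrsim\delta|\xi|^\alpha\sim\delta N^\alpha$ on the support of $P_N^x$. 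This nondegeneracy is the geometric fact that makes dispersion available.

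The analytic heart of the matter is a kernel bound. Writing $P_N^xP_{A_\delta^c}U(t)\phi=K_t^N\ast_{xy}\phi$ with
\begin{equation*}
K_t^N(x,y)=\int_{\R^2}e^{i(x\xi+y\mu+t\omega(\xi,\mu))}\,\varphi_N(\xi)\,\rho_\delta\Big(B-\tfrac{\mu^2}{|\xi|^\alpha}\Big)\,d\xi\,d\mu,
\end{equation*}
I would establish a dispersive estimate of the form $\|K_t^N\|_{L^\infty_{xy}}\lesssim |t|^{-(1-\eps)}N^{-\frac\alpha2+c\eps}$ for some $c=c(\alpha)$, with implicit constant depending on $\delta$. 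The natural route is two successive van der Corput estimates. First integrate in $\mu$ at fixed $\xi\sim N$: the phase $y\mu+t\xi\mu^2$ has second derivative $2t\xi$ of size $|t|N$, producing a factor $(|t|N)^{-1/2}$. Then integrate the resulting amplitude against $e^{i(x\xi+t\,\xi|\xi|^\alpha)}$ in $\xi$; after incorporating the phase correction $-y^2/(4t\xi)$ coming from the stationary point in $\mu$, the effective second derivative in $\xi$ is bounded below by $|\det\mathrm{Hess}\,\omega|/|\partial_\mu^2\omega|\sim\delta N^{\alpha-1}$, which yields a second factor $(|t|N^{\alpha-1})^{-1/2}$; the product is the sharp decay $|t|^{-1}N^{-\alpha/2}$, and the small $\eps$-losses are what is needed to sum the dyadic pieces $\{|B-\mu^2/|\xi|^\alpha|\sim2^j\delta\}$ and to control the amplitude $\rho_\delta$ (whose derivatives carry negative powers of $\delta$) uniformly across the two steps. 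Since only $\xi$ is localized, I would first decompose dyadically in $\mu$: on the pieces with $\mu^2\gg N^\alpha$ the $\mu$-phase is non-stationary, $|\partial_\mu\omega|=2|\xi\mu|\gtrsim N|\mu|$, and repeated integration by parts gives rapid decay and summability, while the pieces with $\mu^2\lesssim N^\alpha$ are treated by the stationary-phase argument above. I expect this kernel estimate, and in particular the bookkeeping of the degeneracy through the two van der Corput steps, to be the \emph{main obstacle}.

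Granting the dispersive estimate, \eqref{main-stri} follows by a standard non-endpoint argument. Interpolating with the conservation law $\|U(t)\phi\|_{L^2_{xy}}=\|\phi\|_{L^2_{xy}}$ gives, for $2\le p<\infty$, the fixed-time bound $\|P_N^xP_{A_\delta^c}U(t)\psi\|_{L^p_{xy}}\lesssim(|t|^{-(1-\eps)}N^{-\alpha/2+c\eps})^{1-2/p}\|\psi\|_{L^{p'}_{xy}}$. The $TT^\ast$ identity $U(t)U(s)^\ast=U(t-s)$ together with the Hardy--Littlewood--Sobolev inequality in $t$ then converts this into an $L^q_tL^p_{xy}$ bound along the line $\frac1q=(1-\eps)(\frac12-\frac1p)$; since $\frac1p=\frac{1-\theta}2$ we have $\frac12-\frac1p=\frac\theta2$ and $1-\frac2p=\theta$, so this is exactly the prescribed line $\frac1q=(1-\eps)\frac\theta2$. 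Keeping track of the powers of $N$, the constant becomes $N^{\frac\theta2(-\alpha/2+c\eps)}$, and choosing $c=2(\alpha+1)$ gives precisely $N^{\theta(\eps(\alpha+1)-\alpha/4)}$, the right-hand side of \eqref{main-stri}. The case $\theta=0$ is the trivial bound $\|U(t)\phi\|_{L^\infty_tL^2_{xy}}\lesssim\|\phi\|_{L^2}$, and since $\theta<1$ forces $q>2$ and $p<\infty$ we never touch the forbidden endpoint of the Strichartz machinery; the full range $\theta\in[0,1)$ is then covered.
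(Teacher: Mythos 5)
Your outer framework coincides with the paper's: reduce \eqref{main-stri} to an $L^\infty_{xy}$ bound on the frequency-localized kernel, interpolate with $L^2$ conservation, and run $TT^*$ with Hardy--Littlewood--Sobolev (your exponent bookkeeping along the line $\frac 1q=\theta\frac{1-\eps}2$ is correct, as is excluding $\theta=1$ because $P_{A_\delta^c}$ is not bounded on $L^1$). But all of the substance of Proposition \ref{stri} lives in the kernel estimate --- the paper's Lemma \ref{It} --- and there your argument has a genuine gap, beyond what you flag as the ``main obstacle''. First, a concrete error in your reduction: you claim the pieces with $\mu^2\gg N^\alpha$ are non-stationary because $|\partial_\mu\omega|=2|\xi\mu|\gtrsim N|\mu|$. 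The phase of the kernel is $t\omega(\xi,\mu)+x\xi+y\mu$, so the relevant derivative is $2t\xi\mu+y$, not $2t\xi\mu$; since the bound must be uniform in $(x,y)$, the stationary point $\mu_c=-y/(2t\xi)$ can sit at any dyadic height, and integration by parts in $\mu$ gives nothing in the block containing $\mu_c$. (Note also that on $A_\delta^c$ the amplitude equals $1$ for all large $\mu$, so the $\mu$-integral is only conditionally convergent; this is exactly why the paper regularizes with $\varphi_{\le M}(\mu)$ and integrates the quadratic phase \emph{exactly} via \eqref{It.09} instead of decomposing dyadically in $\mu$.)

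Second, your two-step van der Corput sketch controls only the leading term of the stationary-phase expansion, and that is where the real difficulty migrates. The lower bound $|\Phi''(\xi)|\gtrsim\delta|t|N^{\alpha-1}$ for the effective $\xi$-phase is valid only when the amplitude is literally $\rho_\delta$ evaluated at $\mu_c(\xi)$, i.e.\ for the leading term; the error terms are not harmless: a standard stationary-phase error of size $O((|t|N)^{-3/2})$ (up to amplitude norms) per unit $\xi$, integrated over $|\xi|\sim N$, gives $|t|^{-3/2}N^{-1/2}$, which exceeds the target $|t|^{-1}N^{-\alpha/2}$ in the whole regime $|t|\lesssim N^{\alpha-1}$. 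The paper sidesteps this by keeping the $\mu$-integration exact, at the price that the $\rho_\delta$-dependence becomes a convolution with $\F^{-1}(\rho_\delta^\pm(B-\mu^2))$ after rescaling; the support information is then lost, so the resulting phase $\psi_2$ genuinely can satisfy $\psi_2''=0$ on the integration domain. This forces machinery your sketch has no counterpart for: the sign splitting $\rho_\delta=\rho_\delta^++\rho_\delta^-$, the sets $B_\eps^\pm$ where the $x$-term makes the first derivative large, the set $C_\gamma$ where $\psi_2''$ degenerates, and on $C_\gamma\cap(B_\eps^\pm)^c$ the first-derivative bound \eqref{It.19}, whose proof uses the sign information in $\rho_\delta^\pm$ together with choosing $\eps,\gamma$ small relative to $\delta$. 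Until you either control the stationary-phase errors uniformly in $(x,y)$, including for $|t|\lesssim N^{\alpha-1}$, or reproduce an exact-integration-plus-decomposition argument of this type, the proof of the dispersive bound --- and hence of the Proposition --- is incomplete.
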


\begin{remark}
  We notice that in the case $\alpha=2$ and $\theta=1/2+$, estimate \eqref{main-stri} was already used in \cite{MP} and is a direct consequence of a more general theorem related to homogeneous polynomial hypersurfaces proved by Carbery, Kenig and Ziesler \cite{CKZ}. However, this result does not apply as soon as $\alpha<2$ since the symbol $\omega$ defined in \eqref{pomega} is no more homogeneous.
\end{remark}

To prove Proposition \ref{stri}, we will need the following result.
\begin{lemma}\label{It}
  Let $N\in\D\setminus\{1\}$. Then
  $$
  I_t(x,y) = \int_{\R^2} e^{i(t\omega(\xi,\mu)+x\xi+y\mu)} \varphi_N(\xi)\rho_\delta\left(B-\frac{\mu^2}{|\xi|^\alpha }\right) d\xi d\mu
  $$
  satisfies
  \begin{equation}\label{It.0}
  \|I_t\|_{L^\infty_{xy}} \lesssim N^{-\alpha /2}|t|^{-1},
  \end{equation}
  for all $t\in\R^\ast$ and $\delta\in (0,1)$.
\end{lemma}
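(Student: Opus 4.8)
The quantity $I_t(x,y)$ is a two-dimensional oscillatory integral with phase $\phi(\xi,\mu)=t\,\omega(\xi,\mu)+x\xi+y\mu$, where $\omega(\xi,\mu)=\xi(|\xi|^\alpha+\mu^2)$, and an amplitude $\varphi_N(\xi)\rho_\delta\bigl(B-\tfrac{\mu^2}{|\xi|^\alpha}\bigr)$ supported where $|\xi|\sim N$ and $\mu^2$ stays away from the critical ratio $B|\xi|^\alpha$. Since we want an $L^\infty_{xy}$ bound uniform in $(x,y)$, the strategy is the classical stationary-phase / van der Corput approach: integrate in the two variables separately, exploiting that the Hessian of $\omega$ is nondegenerate precisely on the support of the amplitude. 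I would first compute the relevant derivatives, $\partial_\mu\omega=2\xi\mu$, $\partial_\mu^2\omega=2\xi$, and $\partial_\xi^2\omega=\alpha(\alpha+1)|\xi|^{\alpha-1}\sgn\xi$, and note the mixed second derivative $\partial_\xi\partial_\mu\omega=2\mu$, so that the Hessian determinant is $2\xi\cdot\alpha(\alpha+1)|\xi|^{\alpha-1}\sgn\xi-4\mu^2=2\bigl(\alpha(\alpha+1)|\xi|^\alpha-2\mu^2\bigr)$. The factor $\rho_\delta$ is designed exactly so that $\mu^2$ is bounded away from $B|\xi|^\alpha=\tfrac{\alpha(\alpha+1)}{2}|\xi|^\alpha$, which keeps this determinant comparable to $|\xi|^\alpha\sim N^\alpha$ in size.

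\textbf{Carrying out the estimate.} I would perform the $\mu$-integration first, holding $\xi$ fixed. Since $\partial_\mu^2\omega=2\xi$ with $|\xi|\sim N$, van der Corput's lemma (second-derivative version) gives a gain of $(|t|N)^{-1/2}$ from the $\mu$-integral, provided one controls the variation of the amplitude in $\mu$; here one must check that $\partial_\mu$ of $\rho_\delta(B-\mu^2/|\xi|^\alpha)$ and higher $\mu$-derivatives are integrable with constants that do not destroy the bound, which is where the support restriction matters. One is then left with an oscillatory integral in $\xi$ whose phase has second derivative $\partial_\xi^2\omega\sim N^{\alpha-1}$ on the support; a second application of van der Corput yields a further gain of $(|t|N^{\alpha-1})^{-1/2}$. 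Multiplying the two decay factors produces $(|t|N)^{-1/2}(|t|N^{\alpha-1})^{-1/2}=|t|^{-1}N^{-\alpha/2}$, which is exactly \eqref{It.0}. The cleanest way to make this rigorous while tracking the amplitude derivatives is probably to invoke the nonisotropic two-dimensional stationary phase estimate: if the Hessian of the phase has two eigenvalues of sizes $\lambda_1\sim|t|N$ and $\lambda_2\sim|t|N^{\alpha-1}$ on the (uniformly nondegenerate, after the $\rho_\delta$ cutoff) critical set, the integral is bounded by $(\lambda_1\lambda_2)^{-1/2}$ up to amplitude-dependent constants.

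\textbf{The main obstacle.} The delicate point is not the generic nondegeneracy but the \emph{uniformity} of the constant over all $(x,y)\in\R^2$ and the behaviour of the amplitude near the boundary of its support. The amplitude $\rho_\delta(B-\mu^2/|\xi|^\alpha)$ has derivatives in $\mu$ of size controlled by $\delta^{-1}$ times negative powers of $|\xi|^\alpha\sim N^\alpha$, and one must verify that these do not generate $\delta$-dependent factors in \eqref{It.0} — the statement asserts a bound independent of $\delta$. I expect the resolution to rely on the fact that wherever $\rho_\delta$ varies we are bounded away from the degenerate set $2\mu^2=\alpha(\alpha+1)|\xi|^\alpha$ by a fixed amount (essentially $\delta|\xi|^\alpha$), so the stationary phase bound there is \emph{better}, and the worst case is $\mu^2$ small or large where $\rho_\delta\equiv 1$ and the Hessian is of full size $N^\alpha$. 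I would therefore split the $\mu$-range into a region near the stationary point $\mu=y/(2t\xi)$ and its complement, handle the complement by nonstationary phase (integration by parts in $\mu$ gains arbitrary decay), and treat the stationary region by the second-derivative van der Corput bound with an amplitude whose total variation is $O(1)$. The remaining $\xi$-integral over a dyadic annulus $|\xi|\sim N$ is one-dimensional with a clean lower bound on $\partial_\xi^2\omega$ away from $\xi=0$ (note $N\neq 1$ rules out the origin), so no further subtlety arises there.
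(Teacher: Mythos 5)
There is a genuine gap, and it sits exactly where you claim ``no further subtlety arises.'' Your main computation iterates one-dimensional van der Corput: a gain of $(|t|N)^{-1/2}$ from the $\mu$-integral, then a gain of $(|t|N^{\alpha-1})^{-1/2}$ from the $\xi$-integral. But van der Corput applied to the inner $\mu$-integral yields an \emph{absolute-value} bound, after which no oscillation in $\xi$ survives to be exploited; to iterate, you must track the phase of the $\mu$-integral as a function of $\xi$. The paper does this exactly, via the Fresnel identity \eqref{It.09}: the $\mu$-integration produces the factor $|t\xi|^{-1/2}$ together with the \emph{new phase term} $-\frac{y^2}{4t\xi}$, so the effective one-dimensional phase is $\psi_2(\xi)=x\xi+t\xi|\xi|^\alpha-\frac{u^2}{4t\xi}+\frac{\pi}{4}\sgn(\xi)$ (with $u$ a convolution variable), not $x\xi+t\xi|\xi|^\alpha$. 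Its second derivative, $\psi_2''(\xi)=\alpha(\alpha+1)t\sgn(\xi)|\xi|^{\alpha-1}-\frac{u^2}{2t\xi^3}$ (see \eqref{It.015}), is \emph{not} of size $|t|N^{\alpha-1}$: it vanishes precisely when the $\mu$-stationary value $\mu_*=-u/(2t\xi)$ satisfies $\mu_*^2=B|\xi|^\alpha$, i.e.\ the resonant-curve degeneracy you excised in frequency space reappears in the effective $\xi$-phase. Nor does the cutoff $\rho_\delta$ trivially prevent this: after the $\mu$-integration it no longer constrains $\mu_*$, since it has become a convolution kernel $\F^{-1}\bigl(\rho_\delta^\pm(B-\mu^2)\bigr)(v)$ in a spatial variable. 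Handling this region is the heart of the lemma: the paper splits $\rho_\delta=\rho_\delta^++\rho_\delta^-$, introduces the $x$-dependent sets $B_\eps^\pm$ and the set $C_\gamma$ where $\psi_2''$ is small, and proves there the first-derivative lower bound $|\psi_2'(\xi)|\gtrsim |x|\vee|t|N^\alpha\vee\frac{u^2}{|t|N^2}$ (estimate \eqref{It.19}), which is what replaces the failed second-derivative bound. None of this is present in your proposal.

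Your fallback, a two-dimensional nonisotropic stationary phase bound, does not repair this cheaply either. The amplitude is unbounded in $\mu$ (its support contains all of $\{\mu^2\ge (B+\delta)|\xi|^\alpha\}$), and for $\mu^2\gg N^\alpha$ the Hessian of $t\omega$ is dominated by its off-diagonal entries $2t\mu$, so \emph{both} eigenvalues are of size $|t||\mu|$; the structure $\lambda_1\sim|t|N$, $\lambda_2\sim|t|N^{\alpha-1}$ you invoke holds only for $\mu^2\lesssim|\xi|^\alpha$ away from the critical curve. Making a two-dimensional argument uniform in $(x,y)$ over this unbounded, non-uniformly-elliptic region (stationary point inside, near the cutoff transition, or absent) requires a decomposition of essentially the same complexity as the paper's, which instead avoids two-dimensional stationary phase altogether: integration by parts in $\xi$ where $\partial_\xi\psi_1$ is large (the region $B_\eps^\pm$, giving the integrable bound $|t|^{-1}(N^\alpha+\mu^2)^{-1}$), and exact Fresnel integration in $\mu$ followed by the $C_\gamma$/$C_\gamma^c$ analysis elsewhere. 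A final small point: your concern about $\delta$-uniformity is not what the lemma needs; the paper's own constants depend on $\delta$ (through the choice of $\eps$ and $\gamma$ before \eqref{It.19}), and $\delta$ is fixed when the lemma is applied in Proposition \ref{stri}.
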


\begin{proof}
  First, recall that the semi-convergent integral $I_t$ may be understood as
  \begin{align}
  I_t(x,t) &= \lim_{M\to\infty} \int_{\R^2} e^{i(t\omega(\xi,\mu)+x\xi+y\mu)} \varphi_N(\xi)\varphi_{\le M}(\mu)\rho_\delta\left(B-\frac{\mu^2}{|\xi|^\alpha }\right) d\xi d\mu \notag\\
  &=\lim_{M\to\infty} (I_t^++I_t^-) \label{It.1}
  \end{align}
  with
  \begin{equation}\label{It.2}
  I_t^{\pm}(x,y) = \int_{\R^2} e^{i(t\omega(\xi,\mu)+x\xi+y\mu)} \varphi_N(\xi)\varphi_{\le M}(\mu)\rho_\delta^{\pm}\left(B-\frac{\mu^2}{|\xi|^\alpha }\right) d\xi d\mu,
  \end{equation}
  and $\rho_\delta^{\pm} = \rho_\delta 1_{\R_\pm}$. We are going to bound $|I_t^\pm|$, uniformly in $x,y$ and $M$. Let $\eps\in (0,1)$ be a small number to be chosen later and define
  \begin{align*}
    B_\eps^+ &= \{\xi\in\R : (\alpha +1+B-\frac \delta 2)|t||\xi|^\alpha < (1-\eps) |x|\},\\
    B_\eps^- &= \{\xi\in\R : (\alpha +1+B+\frac \delta 2)|t||\xi|^\alpha  > (1+\eps) |x|\}.
  \end{align*}
  Then $I_t^\pm$ may be decomposed as
  \begin{align}
    I_t^\pm(x,y) &= \left(\int_{B_\eps^\pm\times \R} + \int_{(B_\eps^\pm)^c\times\R}\right) e^{i(t\omega(\xi,\mu)+x\xi+y\mu)} \varphi_N(\xi)\varphi_{\le M}(\mu)\rho_\delta^{\pm}\left(B-\frac{\mu^2}{|\xi|^\alpha }\right) d\xi d\mu \notag\\
    &:= I_{t,1}^\pm(x,y) + I_{t,2}^\pm(x,y). \label{It.3}
  \end{align}
  We estimate $I_{t,1}^\pm$ and rewrite it as
  \begin{equation}\label{It.4}
  I_{t,1}^\pm (x,y) = \int_\R e^{iy\mu}\varphi_{\le M}(\mu) \left(\int_{B_\eps^\pm} e^{i\psi_1(\xi)} \lambda_1^\pm(\xi) d\xi\right) d\mu ,
  \end{equation}
  where the phase function $\psi_1$ is defined by $\psi_1(\xi) = x\xi + t\xi(|\xi|^\alpha +\mu^2)$, and where $\lambda_1^\pm(\xi) = \varphi_N(\xi)\rho_\delta^\pm \left(B-\frac{\mu^2}{|\xi|^\alpha }\right)$. Then we easily check that
  \begin{equation}\label{It.5}
    |\psi_1'(\xi)|\gtrsim |t|(N^\alpha +\mu^2) \text{ on } B_\eps^\pm\cap \supp(\lambda_1^\pm).
  \end{equation}
  Indeed if we assume that $\xi\in B_\eps^+\cap \supp(\lambda_1^+)$, then it holds $\mu^2<(B-\frac\delta 2)|\xi|^\alpha $ and $|x|> \frac{\alpha +1 +B-\delta/2}{1-\eps}|t||\xi^\alpha |$, from which it follows $|x|> (1-\eps)^{-1}|t|((\alpha +1)|\xi|^\alpha +\mu^2)$. Since $\psi_1'(\xi) = x+t((\alpha+1)|\xi|^\alpha +\mu^2)$, we deduce $|\psi_1'(\xi)|\gtrsim \max(|x|,|t|((\alpha +1)|\xi|^\alpha +\mu^2))$. A similar argument leads also to (\ref{It.5}) for $\xi\in B_\eps^-\cap\supp(\lambda_1^-)$. Moreover, observe that
  \begin{equation}\label{It.6}
   \| \psi_1''\|_{L^{\infty}}\lesssim |t|N^{\alpha -1} , \quad \| \lambda_1^\pm\|_{L^\infty}\lesssim 1,\quad \|\lambda_1^\pm\|_{L^1}\lesssim N,
  \end{equation}
  and
  \begin{equation}\label{It.7}
    \|(\lambda_1^\pm)'\|_{L^1}\lesssim N^{-1}\left(\int_\R|\varphi'(\xi/N)|d\xi + \int_\R |\varphi_N(\xi)| \frac{\mu^2}{|\xi|^{\alpha +1}}\left|(\rho_\delta^\pm)'\left(B-\frac{\mu^2}{|\xi|^\alpha }\right)\right|d\xi\right) \lesssim 1.
  \end{equation}

   Using (\ref{It.5})-(\ref{It.6})-(\ref{It.7}), an integration by parts yields
  \begin{align*}
  \left|\int_{B_\eps^\pm} e^{i\psi_1}\lambda_1^\pm\right| &= \left|\int_{B_\eps^\pm}(e^{i\psi_1})'\frac{\lambda_1^\pm}{\psi_1'}\right| \, d\xi \\
  &\lesssim \frac{\|\lambda_1^\pm\|_{L^\infty}}{|t|(N^\alpha +\mu^2)} + \int_\R\left( \frac{|(\lambda_1^\pm)'|}{|t|(N^\alpha +\mu^2)} + \frac{|\lambda_1^\pm \psi_1''|}     {(t(N^\alpha +\mu^2))^2}\right)\, d\xi \\
  &\lesssim \frac{\|\lambda_1^\pm \|_{L^{\infty}}}{|t|(N^\alpha +\mu^2)} +  \frac{\| (\lambda_1^\pm)'\|_{L^1}}{|t|(N^\alpha +\mu^2)}+ \frac{ \| \lambda_1^\pm \|_{L^1}
  \| \psi_1''\|_{L^{\infty}}}{(t(N^\alpha +\mu^2))^2} \\
  &\lesssim |t|^{-1}(N^\alpha +\mu^2)^{-1} + |t|N N^{\alpha -1} (t(N^{\alpha }+\mu^2))^{-2} \\
  &\lesssim |t|^{-1}(N^\alpha +\mu^2)^{-1}.
  \end{align*}
  Coming back to (\ref{It.4}) we infer
  \begin{equation}\label{It.8}
    |I_{t,1}^{\pm}(x,y)| \lesssim |t|^{-1}\int_\R \frac{d\mu}{N^\alpha +\mu^2}\lesssim N^{-\alpha /2}|t|^{-1}.
  \end{equation}
  It remains to estimate $I_{t,2}^\pm$. Using that
  \begin{equation}\label{It.09}
    \int_\R e^{i(t\xi\mu^2+y\mu)}d\mu = \frac{\sqrt{\pi}}{|t\xi|^{1/2}} e^{-i\frac{y^2}{4t\xi} + i\frac{\pi}4 \sgn(\xi)},
  \end{equation}
  we get
  \begin{multline*}
  I_{t,2}^+(x,y) = \int_{(B_\eps^+)^c\times\R^2} \frac{\sqrt{\pi}}{|t\xi|^{1/2}} e^{i(x\xi+t\xi|\xi|^\alpha -\frac{v^2}{4t\xi}+\frac\pi 4\sgn(\xi))} \\
  \times \F^{-1}_\mu(\rho_\delta^+ (B-\frac{\mu^2}{|\xi|^\alpha }))(u) \F^{-1}(\varphi_{\le M})(y-v-u) \varphi_N(\xi) d\xi dudv.
  \end{multline*}
  Performing the change of variables $u\to |\xi|^{-\alpha/2}u$, a dilatation argument leads to
  \begin{multline}\label{It.9}
  I_{t,2}^+(x,y) = \int_{\R^2} \F^{-1}(\rho_\delta^+(B-\mu^2))(v) \F^{-1}(\varphi_{\le M})(y-u)\\
  \times \left(\int_{(B_\eps^+)^c} e^{i(x\xi+t\xi|\xi|^\alpha -\frac{(u-|\xi|^{-\frac \alpha 2}v)^2}{4t\xi}+\frac\pi 4\sgn(\xi))} \frac{\sqrt{\pi}}{|t\xi|^{1/2}} \varphi_N(\xi)d\xi\right) dudv.
  \end{multline}
  Since $\rho_\delta^+(B-\mu^2)\in \mathcal{D}(\R)$, we infer
  \begin{equation}\label{It.10}
  |I_{t,2}^+(x,y)| \lesssim \sup_{u,v\in\R} \cro{v}^{-2}\left|\int_{(B_\eps^+)^c} e^{i\psi_2}\lambda_2\right| := \sup_{u,v\in\R} \cro{v}^{-2} |J^+(u,v)|
  \end{equation}
  where the new phase function $\psi_2$ is defined by $\psi_2(\xi) = x\xi+t\xi|\xi|^\alpha -\frac{u^2}{4t\xi}+\frac \pi 4\sgn(\xi)$, and
  $$
  \lambda_2(\xi) = \frac{\sqrt{\pi}}{|t\xi|^{1/2}} e^{i\left(\frac{uv}{2t\xi|\xi|^{\alpha /2}} - \frac{v^2}{4t\xi|\xi|^\alpha}\right)} \varphi_N(\xi).
  $$
  We argue similarly to estimate $I_{t,2}^-$, except that we rewrite  $\rho_\delta^-(B-\mu^2)$ as $\rho_\delta^-(B-\mu^2)  = (\rho_\delta^-(B-\mu^2)-1)+1$. Hence we have,
  \begin{multline}\label{It.11}
    I_{t,2}^-(x,y) = \int_{\R^2} \F^{-1}_\mu(\rho_\delta^-(B-\mu^2)-1)(v)\F^{-1}(\varphi_{\le M})(y-u) \left(\int_{(B_\eps^-)^c}e^{i\psi_2}\lambda_2\right)dudv\\
    + \int_\R \F^{-1}(\varphi_{\le M})(y-u)\left(\int_{(B_\eps^-)^c} e^{i\psi_2}\lambda_3\right)
  \end{multline}
  with $\lambda_3(\xi) = \frac{\sqrt{\pi}}{|t\xi|^{1/2}} \varphi_N(\xi)$. Since $\rho_\delta^-(B-\mu^2)-1\in\mathcal{D}(\R)$, estimate (\ref{It.10}) together with (\ref{It.11}) lead to
  \begin{equation}\label{It.12}
  |I_{t,2}^+(x,y)|+|I_{t,2}^-(x,y)| \lesssim \sup_{u,v\in\R}\left(\cro{v}^{-2} (|J^+(u,v)|+|J^-(u,v)|)+|K(u)|\right),
  \end{equation}
  where $J^-(u,v)=\int_{(B_\eps^-)^c} e^{i\psi_2}\lambda_2$ and $K(u) = \int_{(B_\eps^-)^c} e^{i\psi_2}\lambda_3$. Noticing that
  \begin{equation}\label{It.13}
  \|\lambda_2\|_{L^1} + \|\lambda_3\|_{L^1} \lesssim N^{1/2} |t|^{-1/2},
  \end{equation}
  we get
  \begin{equation}
    |I_{t,2}^\pm(x,y)|\lesssim N^{1/2} |t|^{-1/2},
  \end{equation}
  which is acceptable as soon as $|t|< N^{-(\alpha +1)}$. Therefore we assume now that $|t|\ge N^{-(\alpha +1)}$. Observe that since (\ref{It.9}) and (\ref{It.11}) also holds for $I^\pm_{t,1}$ with $(B_\eps^\pm)^c$ replaced with $B_\eps^\pm$, we deduce from (\ref{It.13}) that
  \begin{equation}\label{It.14}
   |I_t(x,y)|\lesssim N^{1/2}|t|^{-1/2},
  \end{equation}
  for any $(t,x,y)\in\R^\ast\times\R^2$.
  Differentiating the phase function we get
  \begin{align}
    \psi_2'(\xi) &= x+(\alpha +1)t|\xi|^\alpha + \frac{u^2}{4t\xi^2},\label{It.15}\\
    \psi_2''(\xi) &= \alpha (\alpha +1)t\sgn(\xi)|\xi |^{\alpha -1} - \frac{u^2}{2t\xi^3}.\label{It.015}
  \end{align}
   Let $\gamma\in (0,1)$ be a small parameter that we will choose later, and define
  $$
  C_\gamma = \{\xi\in\R : (1-\gamma)\alpha (\alpha +1) |t||\xi |^{\alpha -1} < \frac{u^2}{2|t\xi^3|} < (1+\gamma)\alpha (\alpha +1)|t||\xi |^{\alpha -1}.
  $$
  We decompose $J^\pm$ as
  \begin{equation}\label{It.16}
  J^\pm(u,v) = \left(\int_{(B_\eps^\pm)^c\cap C_\gamma} + \int_{(B_\eps^\pm)^c\cap C_\gamma^c}\right)e^{i\psi_2}\lambda_2 := J^\pm_1(u,v)+J^\pm_2(u,v).
  \end{equation}
  From the definition of $C_\gamma$, we have $|\psi_2''(\xi)|\gtrsim |t|N^{\alpha -1} \vee \frac{u^2}{|t|N^3}$ for $\xi\in C_\gamma^c$. Moreover, we have $\|\lambda_2\|_{L^\infty}\lesssim |t|^{-1/2} N^{-1/2}$ and straightforward calculations lead to
  \begin{equation}\label{It.17}
  \|\lambda_2'\|_{L^1} \lesssim |t|^{-1/2} N^{-1/2}+ |t|^{-3/2} N^{-\alpha +3/2}\cro{v}^2 + |t|^{-3/2}N^{-\alpha /2 +5/2}|uv|.
  \end{equation}
  The Van der Corput lemma applies and provides
  \begin{equation}\label{It.18}
  |J_2^\pm(u,v)| \lesssim \left(|t|N^{(\alpha -1)}\vee \frac{u^2}{|t|N^3}\right)^{-1/2} (\|\lambda_2\|_{L^\infty} + \|\lambda_2'\|_{L^1}) \lesssim N^{-\alpha/2}|t|^{-1}\cro{v}^2.
  \end{equation}
  To estimate $J_1^\pm$, we will take advantage of the first derivative of $\psi_2$ given by (\ref{It.15}). Let $\xi\in C_\gamma$. Then we easily see that
  $$
  ((\alpha +1) +B -\gamma B)|t||\xi|^\alpha < \left| (\alpha +1)|t|\xi|^\alpha + \frac{u^2}{4t\xi^2}\right| < ((\alpha +1) +B +\gamma B)|t||\xi|^\alpha.
  $$
  If $\xi\in (B_\eps^+)^c$, then
  $$
  |x| < \frac{\alpha +1 +B -\delta/2}{1-\eps}|t||\xi|^\alpha < (1-\eps)^{-1} \frac{\alpha+1+B-\delta/2}{\alpha +1 +B -\gamma B}\, \left|(\alpha +1)|t|\xi|^\alpha +\frac{u^2}{4t\xi^2}\right|
  $$
  and if $\xi\in (B_\eps^-)^c$, we have
  $$
  |x|> \frac{\alpha +1 +B +\delta/2}{1+\eps} |t||\xi^\alpha| > (1+\eps)^{-1}\frac{\alpha +1 +B +\delta/2}{\alpha +1+B +\gamma B} \, \left|(\alpha +1)|t|\xi|^\alpha +\frac{u^2}{4t\xi^2}\right|.
  $$
  Since we can always choose $\eps,\gamma>0$ small enough so that $(1-\eps)^{-1} \frac{\alpha +1+B -\delta/2}{\alpha +1+B -\gamma B}<1$ and $(1+\eps)^{-1}\frac{\alpha +1+B+\delta/2}{\alpha +1+B +\gamma B}>1$, we infer
  \begin{equation}\label{It.19}
  |\psi_2'(\xi)|\gtrsim |x|\vee |t|N^\alpha \vee \frac{u^2}{|t|N^2} \text{ on } (B_\eps^\pm)^c\cap C_\gamma.
  \end{equation}
  Therefore $J_1^\pm$ is estimated thanks to (\ref{It.17})-(\ref{It.19}) and integration by parts as follows
  \begin{align}
    |J_1^\pm(u,v)| &= \left|\int_{(B_\eps^\pm)^c\cap C_\gamma} (e^{i\psi_2})'\frac{\lambda_2}{\psi_2'}\right| \notag\\
    &\lesssim \left(|t|N^\alpha \vee \frac{u^2}{|t|N^2}\right)^{-1} (\|\lambda_2\|_{L^\infty}+\|\lambda_2'\|_{L^1}) + \left(|t|N^\alpha \vee \frac{u^2}{|t|N^2}\right)^{-2} \|\psi_2''\|_{L^\infty(C_\gamma)}\|\lambda_2\|_{L^1} \notag \\
    &\lesssim |t|^{-3/2}N^{-\alpha/2} \cro{v}^2 \lesssim N^{-\alpha/2}|t|^{-1}\cro{v}^2. \label{It.20}
  \end{align}
  Combining (\ref{It.16})-(\ref{It.18})(\ref{It.20}) we deduce
  \begin{equation}\label{It.21}
  \sup_{u,v\in\R}(\cro{u}^{-2} (|J^+(u,v)|+|J^-(u,v)|)\lesssim N^{-1/2}|t|^{-1},
  \end{equation}
  as desired. Estimates for $K$ are similar, since (\ref{It.17}) is replaced with
  $$
  \|\lambda_3'\|_{L^1} \lesssim |t|^{-1/2}N^{-1/2}.
  $$
  We obtain the bound
  \begin{equation}\label{It.22}
    \sup_{u,v\in\R} |K(u)|\lesssim N^{-1/2}|t|^{-1}.
  \end{equation}
  Combining (\ref{It.1})-(\ref{It.2})-(\ref{It.3})-(\ref{It.8})-(\ref{It.12})-(\ref{It.21})-(\ref{It.22}) we complete the proof of Lemma \ref{It}.
\end{proof}

\begin{proof}[Proof of Proposition \ref{stri}] The case $N=1$ is straightforward, therefore we assume $N\ge 2$. Interpolating estimates (\ref{It.0}) and (\ref{It.14}) we get for any $\eps\in (0,1)$
\begin{equation}\label{stri.1}
\|I_t\|_{L^\infty_{xy}} \lesssim N^{-\frac \alpha 2+\frac{\eps (\alpha +1)}{2}} |t|^{-1+\frac\eps 2}.
\end{equation}
On the other hand, we get from (\ref{rho}) that
$$
P_N^xP_{A_\delta^c}U(t)\phi = I_t \ast_{xy} (P_{A_\delta^c}\phi).
$$
Thus, thanks to Young inequality and estimate (\ref{stri.1}), we infer
$$
\|P_N^xP_{A_\delta^c}U(t)\phi\|_{L^\infty_{xy}} \lesssim N^{-\frac \alpha 2+\frac{\eps (\alpha +1)}{2}} |t|^{-1+\frac \eps 2} \|P_{A_\delta^c}\phi\|_{L^1},
$$
for any $t\in\R^\ast$. Therefore, by interpolation with the straightforward equality $\|U(t)\phi\|_{L^2_{xy}}=\|\phi\|_{L^2}$ we deduce that for any $\theta\in [0,1)$,
$$
\|P_N^xP_{A_\delta^c}U(t)\phi\|_{L^p_{xy}} \lesssim N^{\theta(\frac{\eps (\alpha +1)}{2}-\frac \alpha 2)} |t|^{\theta(\frac \eps 2-1)} \|\phi\|_{L^{p'}},
$$
where $\frac 1p+\frac 1{p'}=1$ and $\frac 1p=\frac{1-\theta}2$. Remark that we exclude the case $\theta=1$ because the operator $P_{A_\delta^c}$ is not continuous on $L^1(\R^2)$. The previous estimate combined with the triangle inequality and Hardy-Littlewood-Sobolev theorem lead to
\begin{equation}\label{stri.2}
  \left\|P_N^xP_{A_\delta^c} \int_\R U(t-t')f(t')dt'\right\|_{L^q_tL^p_{xy}} \lesssim N^{\theta(\eps (\alpha+1)-\frac \alpha 2)} \|f\|_{L^{q'}_tL^{p'}_{xy}},
\end{equation}
for all $f\in \mathcal{S}(\R^3)$, where $\frac 1q+\frac 1{q'}=1$ and $\frac 2q = 1-\frac\eps 2$. Estimate (\ref{stri}) is then obtained from (\ref{stri.2}) by the classical Stein-Thomas argument.
\end{proof}

\begin{corollary}\label{l4}
  Assume $\delta\in (0,1)$, $H,N\in\D$ and $f\in X_H$. Then for all $s>-\alpha/8$, it holds that
  \begin{equation}\label{l4Loc}
  \|P_{A_\delta^c} P_N^x \F^{-1}(f)\|_{L^4_{txy}} \lesssim N^s \|f\|_{X_H}.
  \end{equation}
\end{corollary}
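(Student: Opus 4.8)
The plan is to exploit the $\ell^1$-in-modulation structure of $X_H$ to reduce to a single modulation slice, and then to recover the endpoint $L^4_{txy}$ by interpolating the Strichartz bound of Proposition \ref{stri} against a cheap Bernstein estimate in the time variable.

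First I decompose $f=\sum_{L\in\D}f_L$ with $f_L=\varphi_L(\tau-\omega(\zeta))f$ and set $u_L=\F^{-1}(f_L)$. Since $\|P_{A_\delta^c}P_N^x\F^{-1}(f)\|_{L^4_{txy}}\le\sum_L\|P_{A_\delta^c}P_N^x u_L\|_{L^4_{txy}}$ by the triangle inequality while $\|f\|_{X_H}=\sum_L L^{1/2}\|f_L\|_{L^2_{\tau\zeta}}$, it suffices to prove the single-slice bound $\|P_{A_\delta^c}P_N^x u_L\|_{L^4_{txy}}\lesssim N^s L^{1/2}\|f_L\|_{L^2}$ and to sum over $L\in\D$ (recall $L\ge1$). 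To estimate the slice I write it as a superposition of modulated free evolutions: setting $\widehat{h_{\tau'}}(\zeta)=f_L(\tau'+\omega(\zeta),\zeta)$, the change of variables $\tau=\tau'+\omega(\zeta)$ gives $u_L(t)=\int_\R e^{it\tau'}U(t)h_{\tau'}\,d\tau'$, where the integrand is supported in $|\tau'|\sim L$ and $\int\|h_{\tau'}\|_{L^2}^2\,d\tau'=\|f_L\|_{L^2}^2$.

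Applying Minkowski's inequality in $L^4_tL^p_{xy}$, then Proposition \ref{stri} with $\theta,\eps$ chosen so that the time exponent equals $4$ (that is, $\theta=(2(1-\eps))^{-1}$, which forces a spatial exponent $p>4$), and finally Cauchy--Schwarz in $\tau'$ over the band of length $\sim L$, yields the transferred Strichartz estimate
$$
\|P_{A_\delta^c}P_N^x u_L\|_{L^4_tL^p_{xy}}\lesssim N^{s'}L^{1/2}\|f_L\|_{L^2},\qquad s'=\theta\Big(\eps(\alpha+1)-\tfrac\alpha4\Big)\xrightarrow[\eps\to0]{}-\tfrac\alpha8 .
$$
The obstruction is that $p>4$: the endpoint $L^4_{txy}$ is exactly the corner Proposition \ref{stri} cannot reach, since $p=q=4$ would require $\eps=0$. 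I close the gap by interpolating against an $L^4_tL^2_{xy}$ bound coming from Bernstein in time. Indeed, for fixed $\zeta$ the function $t\mapsto\int_\tau e^{it\tau}f_L(\tau,\zeta)\,d\tau$ has Fourier support in $t$ contained in an interval of length $\sim L$; Bernstein in one variable then costs only an $L^{1/4}$ factor, and combining this with Minkowski (to exchange $L^4_t$ and $L^2_\zeta$) and Plancherel in $\zeta$ produces $\|P_{A_\delta^c}P_N^x u_L\|_{L^4_tL^2_{xy}}\lesssim L^{1/4}\|f_L\|_{L^2}$.

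Interpolating these two estimates with the time exponent frozen at $4$ and the spatial exponent moved from $p$ down to $2$ lands on $L^4_tL^4_{xy}=L^4_{txy}$, with a small weight $\vartheta\sim\eps$ on the Bernstein endpoint; this gives a factor $N^{s'(1-\vartheta)}L^{1/2-\vartheta/4}$, and choosing $\eps$ small enough (depending on $s$) so that $s'(1-\vartheta)\le s$ yields the single-slice bound since $N\ge1$ and $L\ge1$. Summing over $L$ completes the argument. I expect the delicate point to be precisely this interpolation step: the naive transference preserves the free pair $(p,q)$, so one must correct one exponent with a lossy estimate, and the whole point is to spend the loss on the \emph{time} exponent — where the modulation width $L$ makes it the harmless factor $L^{1/4}$, absorbed by the $L^{1/2}$ weight of the $X_H$ norm — rather than on the spatial exponent, where the $\mu$-support of width $\sim H^{1/2}$ would generate a factor $H^{0+}$ and destroy the clean $N^s\|f\|_{X_H}$ bound.
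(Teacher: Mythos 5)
Your proof is correct, and it rests on the same two pillars as the paper's own argument: the localized Strichartz estimate of Proposition \ref{stri}, and the standard transference to $X_H$ via the foliation $u_L(t)=\int_\R e^{it\tau'}U(t)h_{\tau'}\,d\tau'$ followed by Cauchy--Schwarz against the $L^{1/2}$ weights. Where you genuinely differ is in how the endpoint $L^4_{txy}$ is recovered, since Proposition \ref{stri} excludes $\theta=1$ and hence the corner $p=q=4$. The paper takes $\theta=\frac{1}{2-\eps}$, so that $p=q=4+$, transfers to get $\|P_{A_\delta^c}P_N^x\F^{-1}(f)\|_{L^{4+}_{txy}}\lesssim N^{(-\alpha/8)+}\|f\|_{X_H}$, and then concludes by interpolating (log-convexity of $L^p$ norms applied to the fixed function $P_{A_\delta^c}P_N^x\F^{-1}(f)$) against the trivial bound $\|\F^{-1}(f)\|_{L^2_{txy}}\lesssim \|f\|_{X_H}$; the only price is a slight degradation of the exponent $(-\alpha/8)+$, harmless because only $s>-\alpha/8$ is claimed. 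You instead freeze the time exponent at $q=4$, accept $p>4$, and repair the spatial exponent with a per-modulation-slice $L^4_tL^2_{xy}$ bound obtained from Bernstein in time; this forces you to perform the dyadic modulation decomposition by hand and to track the factor $L^{1/2-\vartheta/4}$, but every step is sound: the choice $\theta=\frac{1}{2(1-\eps)}$ is admissible in Proposition \ref{stri}, the mixed-norm interpolation with equal outer exponents is just H\"older, the constants are uniform in $L$ and $H$, and the sum over $L\ge 1$ reproduces exactly the $\ell^1$ structure of $\|f\|_{X_H}$. Two remarks: your route is heavier than necessary, since interpolating with the global trivial $L^2_{txy}$ bound makes both the slicing and the Bernstein step superfluous; and your closing claim that the loss \emph{must} be spent on the time exponent is not accurate --- the paper spends it on the power of $N$ instead (passing from $-\alpha/8$ to any $s>-\alpha/8$), and in doing so incurs no $H$-dependent loss whatsoever.
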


\begin{proof}
  We apply Proposition \ref{stri} with $\theta=\frac{1}{2-\eps}$ and obtain
  $$
  \|P_N^xP_{A_\delta^c}U(t)\phi\|_{L^{4+}_{txy}} \lesssim N^{(-\alpha/8)+} \|\phi\|_{L^2}
  $$
  for any $\phi\in L^2(\R^2)$. Setting $f^\sharp(\theta,\xi,\mu) = f(\theta+\omega(\xi,\mu),\xi,\mu)$ it follows then from Minkowski and Cauchy-Schwarz in $\theta$ that
\begin{align*}
  \|P_{A_\delta^c}P_N^x\F^{-1}(f)\|_{L^{4+}}&\lesssim \|U(t)P_{A_\delta^c}P_N^x \F^{-1}(f^\sharp)\|_{L^{4+}}\\
  &\lesssim N^{(-\alpha/8)+} \int_\R \|f^\sharp(\theta,\zeta)\|_{L^2_\zeta}d\theta\\
  &\lesssim N^{(-\alpha/8)+} \sum_{L\in\D} L^{1/2} \|\varphi_L(\theta)f^\sharp(\theta,\zeta)\|_{L^2_{\theta\zeta}}\\
  &\lesssim N^{(-\alpha/8)+} \|f\|_{X_H}.
\end{align*}
Interpolating this with the trivial bound $\|\F^{-1}(f)\|_{L^2_{txy}}\lesssim \|f\|_{X_H}$ we conclude the proof of Corollary \ref{l4}.
\end{proof}

We conclude this section by stating a global Strichartz estimate that will not be used in the proof of Theorem \ref{theo-main}, but that may be of independent interest for future considerations.

\begin{proposition}
Let $N\in\D$. Assume that $(p,q)$ satisfies $\frac 1q=\frac {5\theta}{12}$ and $\frac 1p=\frac{1-\theta}2$ for some $\theta\in [0,1]$. Then it holds that
\begin{equation}\label{striGlo}
\|P_N^xU(t)\phi\|_{L^q_tL^p_{xy}}\lesssim N^{-\frac \theta 6(\alpha -\frac 12)}\|\phi\|_{L^2}.
\end{equation}
for any $\phi\in L^2(\R^2)$.
\end{proposition}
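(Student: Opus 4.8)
The plan is to reduce the estimate to a single global dispersive bound on the kernel and then invoke the $TT^*$ (Stein--Thomas) machinery exactly as in the proof of Proposition \ref{stri}. Writing
$$
\tilde I_t(x,y) = \int_{\R^2} e^{i(t\omega(\xi,\mu)+x\xi+y\mu)}\varphi_N(\xi)\,d\xi\,d\mu,
$$
so that $P_N^x U(t)\phi = \tilde I_t \ast_{xy}\phi$, I would prove the frequency--localized dispersive estimate
$$
\|\tilde I_t\|_{L^\infty_{xy}} \lesssim N^{\frac 16-\frac\alpha 3}\,|t|^{-5/6}, \qquad t\neq 0.
$$
Interpolating this $L^1\to L^\infty$ bound with the identity $\|U(t)\phi\|_{L^2}=\|\phi\|_{L^2}$ gives $\|U(t)P_N^x\|_{L^{p'}\to L^p}\lesssim N^{\theta(\frac16-\frac\alpha3)}|t|^{-5\theta/6}$ with $\frac1p=\frac{1-\theta}2$, and Hardy--Littlewood--Sobolev in time then yields the retarded estimate with $\frac2q=\frac{5\theta}6$, i.e. $\frac1q=\frac{5\theta}{12}$. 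The square root coming from $TT^*$ turns the $N$-power $N^{\theta(\frac16-\frac\alpha3)}$ into $N^{\frac\theta2(\frac16-\frac\alpha3)}=N^{-\frac\theta6(\alpha-\frac12)}$, which is exactly \eqref{striGlo}.

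Everything thus reduces to the dispersive bound, and the cleanest way to extract the correct powers is to rescale out the frequency. Setting $\xi=N\xi'$, $\mu=N^{\alpha/2}\mu'$ and $T=tN^{\alpha+1}$, $X=Nx$, $Y=N^{\alpha/2}y$, one finds $\tilde I_t(x,y)=N^{1+\alpha/2}\,G(T,X,Y)$ where
$$
G(T,X,Y) = \int_{\R^2} e^{i(T\omega(\xi,\mu)+X\xi+Y\mu)}\varphi(\xi)\,d\xi\,d\mu
$$
is a universal ($N$-independent) oscillatory integral over $\xi\sim 1$. A direct computation shows that the claimed bound on $\tilde I_t$ reduces to $\|G(T,\cdot,\cdot)\|_{L^\infty_{XY}} \lesssim \cro{T}^{-5/6}$; the regime $|T|\lesssim 1$, i.e. $|t|\lesssim N^{-(\alpha+1)}$, is already covered by the trivial bound $\|G(T)\|_{L^\infty_{XY}}\lesssim|T|^{-1/2}$ coming from exact Gaussian integration, as in \eqref{It.14}. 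Performing the $\mu$-integration via \eqref{It.09} reduces $G$ to a one-dimensional oscillatory integral
$$
G(T,X,Y) = |T|^{-1/2}\int_\R e^{i\Phi(\xi)} b(\xi)\,d\xi, \qquad \Phi(\xi)=T\xi|\xi|^\alpha+X\xi-\frac{Y^2}{4T\xi},
$$
with $b(\xi)=\sqrt\pi\,|\xi|^{-1/2}\varphi(\xi)e^{i\frac\pi4\sgn\xi}$ satisfying $\|b\|_{L^\infty}+\|b'\|_{L^1}\lesssim 1$. It then suffices to gain $|T|^{-1/3}$ from the remaining $\xi$-integral, uniformly in $(X,Y)$.

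The heart of the matter, and the main obstacle, is that on $\{\xi\sim 1\}$ the phase $\Phi$ develops a genuine fold: its critical points degenerate precisely where the Hessian of $\omega$ vanishes, i.e. on the critical curve $A_\delta$ excluded in Lemma \ref{It}. The key point I would establish is the uniform (in $X,Y$) lower bound $\max\bigl(|\Phi''(\xi)|,|\Phi'''(\xi)|\bigr)\gtrsim |T|$ on $\{\xi\sim1\}$. Indeed
$$
\Phi''(\xi)=\alpha(\alpha+1)T\sgn(\xi)|\xi|^{\alpha-1}-\frac{Y^2}{2T\xi^3}, \qquad \Phi'''(\xi)=\alpha(\alpha+1)(\alpha-1)T|\xi|^{\alpha-2}+\frac{3Y^2}{2T\xi^4};
$$
whenever $|\Phi''|$ is small one has $Y^2/T\sim T$, forcing $\Phi'''\sim T$, while $\Phi'''$ small forces $Y^2/T$ small, hence $\Phi''\sim T$. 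In particular $\Phi''$ and $\Phi'''$ cannot vanish simultaneously, so the degeneracy is at worst a fold for every $1\le\alpha\le2$ (the positivity of both terms of $\Phi'''$ rules out higher degeneracy, even at $\alpha=1$, where the first term drops out). Covering $\{\xi\sim1\}$ by $O(1)$ subintervals on each of which either $|\Phi''|\gtrsim|T|$ or $|\Phi'''|\gtrsim|T|$ and applying the van der Corput lemma of order $2$ or $3$ (using $\|b\|_{L^\infty}+\|b'\|_{L^1}\lesssim1$) yields $|\int e^{i\Phi}b|\lesssim|T|^{-1/3}$, whence $\|G(T)\|_{L^\infty_{XY}}\lesssim|T|^{-5/6}$. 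Since $X$ enters only through $\Phi'$ and $Y$ only through $Y^2/T$, the bound is uniform in $(X,Y)$, as required. The case $N=1$, where $\xi=0$ lies in the support and both the rescaling and the factor $|\xi|^{-1/2}$ degenerate, would be treated separately by a direct and easier argument, since no gain in $N$ is then needed.
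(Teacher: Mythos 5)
Your argument is correct for $N\ge 2$ and follows the same skeleton as the paper's proof: reduce via interpolation, Hardy--Littlewood--Sobolev and the $TT^*$ argument to the dispersive bound $\|\widetilde{I}_t\|_{L^\infty}\lesssim N^{\frac 16-\frac\alpha 3}|t|^{-5/6}$, integrate out $\mu$ exactly via \eqref{It.09}, and run van der Corput on the resulting one-dimensional phase (the paper's $\psi_2$, your $\Phi$), using that its second and third derivatives cannot be small simultaneously. Where you genuinely differ is the organization of the van der Corput step. The paper treats $\alpha>1$ and $\alpha=1$ separately: for $\alpha>1$ it uses only $|\psi_2'''|\gtrsim(\alpha-1)|t|N^{\alpha-2}$ (both terms of $\psi_2'''$ in \eqref{It.015} carry the sign of $t$), while for $\alpha=1$ it splits according to whether $|t|\sim u^2/(|t|N^3)$ and, in the non-comparable regime, must interpolate the second-derivative bound \eqref{striGlo.1} against the trivial bound \eqref{striGlo.2} to recover $N^{-1/6}|t|^{-5/6}$. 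Your rescaling to the universal integral $G(T,X,Y)$ together with the pointwise dichotomy $\max(|\Phi''|,|\Phi'''|)\gtrsim |T|$ on $\{|\xi|\sim 1\}$ handles all $\alpha\in[1,2]$ at once, with constants that do not degenerate as $\alpha\downarrow 1$ (the paper's $\alpha>1$ constant blows up like $(\alpha-1)^{-1/3}$, which is why it needs the separate case), and the paper's final interpolation becomes the trivial remark that $|T|^{-1/2}\le |T|^{-1/3}$ for $|T|\ge 1$. Your unproven claim that $O(1)$ intervals suffice is in fact justified: $\Phi''$ is monotone on each half-line since $\Phi'''$ is a sum of two terms of the same sign, so $\{|\Phi''|<c|T|\}$ has at most one component per half-line. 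Also, your identification of the fold locus with the curve $\mu^2=B|\xi|^\alpha$ of Lemma \ref{It} is exactly right.

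Two caveats. A minor one: your target for $G$ should read $|T|^{-5/6}$, not $\cro{T}^{-5/6}$ (the small-$|T|$ regime is then indeed covered by $|T|^{-1/2}$), and the $\mu$-integral is only conditionally convergent, which is why the paper carries the cutoff $\varphi_{\le M}(\mu)$ and proves bounds uniform in $M$; you should keep it, at no cost, since it only convolves the answer in $y$ with an $L^1$-normalized kernel, as in \eqref{It.11}. More seriously, your disposal of $N=1$ is wrong: there is no ``direct and easier argument'', because \eqref{striGlo} is actually \emph{false} at $N=1$ for $\theta\in(0,1]$. Indeed, taking $\phi_\eps(x,y)=\psi(\eps x,\eps^{\alpha/2}y)$ with $\widehat\psi$ a bump at $|\xi|\sim 1$, the anisotropic homogeneity $\omega(\eps\xi,\eps^{\alpha/2}\mu)=\eps^{\alpha+1}\omega(\xi,\mu)$ gives $U(t)\phi_\eps(x,y)=(U(\eps^{\alpha+1}t)\psi)(\eps x,\eps^{\alpha/2}y)$, and inserting this into \eqref{striGlo} with $N=1$ yields $\|U(t)\psi\|_{L^q_tL^p_{xy}}\lesssim \eps^{\frac\theta 6(\alpha-\frac 12)}\|\psi\|_{L^2}\to 0$ as $\eps\to 0$, a contradiction; equivalently, the constant for a dyadic block at $\xi$-frequency $M\le 1$ must grow like $M^{-\frac\theta 6(\alpha-\frac 12)}$, so the blocks inside $P_1^x$ cannot be summed. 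This defect is inherited from the paper: its statement allows $N=1$, but its proof (like yours, and like the proof of Proposition \ref{stri}, whose ``$N=1$ is straightforward'' suffers the same objection) uses $|\xi|\sim N$ throughout, e.g.\ in the amplitude bounds on $\lambda_3$, which blow up as $\xi\to 0$. So you are no worse off than the paper here, but the honest fix is to restrict the statement to $N\in\D\setminus\{1\}$ rather than to promise an argument that cannot exist.
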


\begin{proof}
  As in the proof of Proposition \ref{stri}, it suffices to show that
  $$
  \widetilde{I}_t(x,y) := \int_{\R^2} e^{i(t\omega(\xi,\mu)+x\xi+y\mu)} \varphi_N(\xi) \varphi_{\le M}(\mu) d\xi d\mu
  $$
  satisfies
  $$
    |\widetilde{I}_t(x,y)|\lesssim N^{\frac 16-\frac \alpha 3} |t|^{-5/6},
  $$
  with an implicit constant that does not depend on $M\in\D$. Thanks to \eqref{It.09} we may rewrite $\widetilde{I}_t$ as
  $$
  \widetilde{I}_t(x,y) = \int_\R \F^{-1}(\varphi_{\le M})(y-u) \left(\int_\R \frac{\sqrt{\pi}}{|t\xi|^{1/2}} e^{i\psi_2(\xi)} \varphi_N(\xi) d\xi\right) du
  $$
  where $\psi_2(\xi)=x\xi+t\xi|\xi|^\alpha-\frac{u^2}{4t\xi}+\frac\pi 4\sgn(\xi)$ was defined in \eqref{It.10}. Since the third derivative of $\psi_2$ is given by $\psi_2'''(\xi) = \alpha(\alpha-1)(\alpha+1) t|\xi|^{\alpha-2} + \frac{3u^2}{2t\xi^4}$, the Van der Corput lemma implies in the case $\alpha>1$ that
  $$
  |\widetilde{I}_t(x,y)|\lesssim (|t|N^{\alpha-2})^{1/3} (|t|N)^{-1/2}\sim N^{\frac 16-\frac\alpha 3}|t|^{-5/6}
  $$
  as desired. Now consider the case $\alpha=1$. In the region where $|t|\sim \frac{u^2}{|t|N^3}$, we get directly that $|\psi_2'''|\sim |t|N^{-1}$ as previously. Therefore we may assume $|t|\not\sim \frac{u^2}{|t|N^3}$. From \eqref{It.015} we deduce $|\psi_2''|\gtrsim |t|$ which combined with the Van der Corput lemma provides
  \begin{equation}\label{striGlo.1}
  |\widetilde{I}_t(x,y)| \lesssim |t|^{-1/2} (|t|N)^{-1/2}\sim N^{-1/2} |t|^{-1}.
  \end{equation}
  On the other hand, we have the trivial bound
  \begin{equation}\label{striGlo.2}
    |\widetilde{I}_t(x,y)| \lesssim \int_\R \frac{\varphi_N(\xi)}{|t\xi|^{1/2}}d\xi \lesssim N^{1/2}|t|^{-1/2}.
  \end{equation}
  Gathering \eqref{striGlo.1}-\eqref{striGlo.2} we infer
  $$
  |\widetilde{I}_t(x,y)|\lesssim (N^{-1/2}|t|^{-1})^{2/3} (N^{1/2}|t|^{-1/2})^{1/3} \sim N^{-1/6}|t|^{-5/6},
  $$
  which concludes the proof of \eqref{striGlo}.
\end{proof}

\begin{remark}
  It follows by applying estimate \eqref{striGlo} with $\theta=1/2$ that
  $$
  \|P_N^xU(t)\phi\|_{L^{24/5}_tL^4_{xy}}\lesssim N^{-\frac 1{12}(\alpha -\frac 12)}\|\phi\|_{L^2}.
  $$
  Therefore, arguing as in the proof of Corollary \ref{l4} we infer that for all $f\in X_H$ such that $\supp \F^{-1}(f)\subset [0,T]\times\R^2$ for some $T\in (0,1]$, we have
  \begin{equation}\label{l4Glo}
    \|P_N^x\F^{-1}(f)\|_{L^4_{txy}} \lesssim N^{-\frac 1{12}(\alpha -\frac 12)}\|f\|_{X_H}.
  \end{equation}
  Consequently, \eqref{l4Loc} can be viewed as an improvement of estimate \eqref{l4Glo} since outside the curves $\mu^2=B|\xi|^\alpha$, it allows to recover $\frac \alpha 8$ derivatives instead of $\frac 1{12}(\alpha-\frac 12)$ derivatives in $L^4$.
\end{remark}

\section{$L^2$ bilinear estimates}\label{sec-l2bil}
For $H,N,L\in\D$, let us define $D_{H,N,L}$ and $D_{H,\infty,L}$ by
\begin{equation}\label{DNML}
D_{H,N,L} = \{(\tau,\xi,\mu)\in\R^3: \xi\in I_N, (\xi,\mu)\in \Delta_H \textrm{ and } |\tau+\omega(\xi,\mu)|\le L\},
\end{equation}
and
\begin{equation}\label{DNinfL}
D_{H,\infty,L} = \{(\tau,\xi,\mu)\in\R^3 : (\xi,\mu)\in \Delta_H \textrm{ and } |\tau+\omega(\xi,\mu)|\le L\} = \bigcup_{N\in\D} D_{H,N,L}.
\end{equation}

\begin{proposition}\label{prop.bilinear}
 Assume that $H_i, N_i, L_i\in \D$ are dyadic numbers and $f_i:\R^3\to\R_+$ are $L^2$ functions for $i=1,2,3$.
 \begin{enumerate}
 \item If $f_i$ are supported in $D_{H_i,\infty,L_i}$ for $i=1,2,3$, then
 \begin{equation}\label{prop.bilinear.00}
  \int_{\R^3} (f_1\ast f_2)\cdot f_3 \lesssim H_{min}^{\frac 1{2\alpha}+\frac 14} L_{min}^{1/2} \|f_1\|_{L^2}\|f_2\|_{L^2}\|f_3\|_{L^2}.
 \end{equation}
 \item Let us suppose that $H_{min}\ll H_{max}$ and $f_i$ are supported in $D_{H_i,\infty,L_i}$ for $i=1,2,3$. If $(H_i,L_i)=(H_{min},L_{max})$ for some $i\in\{1,2,3\}$ then
 \begin{equation}\label{prop.bilinear.0}
  \int_{\R^3} (f_1\ast f_2)\cdot f_3 \lesssim H_{max}^{-1/2} H_{min}^{1/4} L_{min}^{1/2}L_{max}^{1/2} \|f_1\|_{L^2}\|f_2\|_{L^2}\|f_3\|_{L^2}.
 \end{equation}
 Otherwise we have
 \begin{equation}\label{prop.bilinear.001}
  \int_{\R^3} (f_1\ast f_2)\cdot f_3 \lesssim H_{max}^{-1/2} H_{min}^{1/4} L_{min}^{1/2}L_{med}^{1/2} \|f_1\|_{L^2}\|f_2\|_{L^2}\|f_3\|_{L^2}.
 \end{equation}
 \item If $H_{min}\sim H_{max}$ and $f_i$ are supported in $D_{H_i,N_i,L_i}$ for $i=1,2,3$, then
 \begin{equation}\label{prop.bilinear.01}
  \int_{\R^3} (f_1\ast f_2)\cdot f_3 \lesssim N_{max}^{-\alpha/2} H_{min}^{(1/4)+} L_{med}^{1/2}L_{max}^{1/2} \|f_1\|_{L^2}\|f_2\|_{L^2}\|f_3\|_{L^2}.
 \end{equation}
 \end{enumerate}
\end{proposition}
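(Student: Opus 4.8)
The plan is to recast the trilinear form, exploit its symmetry, and run a Cauchy--Schwarz/measure argument whose sharpness is governed by the resonance function $\Omega$ and the transversality of the characteristic surfaces. Writing $w_i=(\tau_i,\xi_i,\mu_i)$, I would first note
$$
\int_{\R^3}(f_1\ast f_2)\,f_3=\int_{\R^6}f_1(w_1)\,f_2(w_2)\,f_3(w_1+w_2)\,dw_1\,dw_2 .
$$
Since $\omega$ is odd and $h$ is even, the regions $D_{H,N,L}$ and $D_{H,\infty,L}$ are invariant under $w\mapsto -w$, so after this substitution the form is symmetric in $f_1,f_2,f_3$ and I may permute the roles at will. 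For part (1), I would pair the factor whose indices realize neither the minimal $H$ nor the minimal $L$, and bound the remaining convolution by $\int(f_a\ast f_b)f_c\le(\sup_w|E_w|)^{1/2}\prod_i\|f_i\|_{L^2}$, where $E_w=\{w_a:\,w_a\in\supp f_a,\ w-w_a\in\supp f_b\}$. The $\tau$-slices of the two supports are intervals of lengths $\lesssim L_a,L_b$ meeting in length $\lesssim L_{min}$, while the frequency support has area $\le|\Delta_{H_{min}}|$; the scaling $\xi=H^{1/\alpha}\tilde\xi,\ \mu=H^{1/2}\tilde\mu$ gives $|\Delta_H|\sim H^{1/\alpha+1/2}$ (equivalently $\|\psi_H\|_{L^2}^2\sim H^{1/\alpha+1/2}$). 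This yields $\sup_w|E_w|\lesssim L_{min}H_{min}^{1/\alpha+1/2}$, i.e. \eqref{prop.bilinear.00}.

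For the refined estimates I would introduce the modulations $\sigma_i=\tau_i+\omega(\zeta_i)$, so that $|\sigma_i|\le L_i$ and the convolution constraint forces $\sigma_3=\sigma_1+\sigma_2+\Omega(\zeta_1,\zeta_2)$ with $\zeta_3=\zeta_1+\zeta_2$, whence $L_{max}\gtrsim|\Omega|$. I would also record the transversality data $\partial_{\xi_i}\Omega=h(\zeta_3)-h(\zeta_i)$. When $H_{min}\ll H_{max}$, the relation $\zeta_3=\zeta_1+\zeta_2$ forces the two non-minimal $h$-values to be comparable to $H_{max}$ (a small value of $h=(\alpha+1)|\xi|^\alpha+\mu^2$ localizes the corresponding frequency to a small ball, so the other two nearly coincide). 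Writing $j$ for the index with $H_j=H_{min}$, it follows that $|\partial_{\xi_j}\Omega|=|h(\zeta_3)-h(\zeta_j)|\sim H_{max}$: the two characteristic surfaces meeting in the variable $\xi_j$ are transversal with transversality $\sim H_{max}$.

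For part (2) I would pair one factor and apply Cauchy--Schwarz in the variables of the low-frequency factor $j$, but now keep the constraint $|\sigma_3|\le L_3$ active. For fixed $\mu_j$ and fixed modulations the map $\xi_j\mapsto\sigma_3$ has derivative $h(\zeta_3)-h(\zeta_j)\sim H_{max}$ of fixed sign, so this constraint confines $\xi_j$ to an interval of length $\lesssim L_{max}/H_{max}$ instead of the full range $\sim H_{min}^{1/\alpha}$. Together with the $\mu_j$-range $\lesssim H_{min}^{1/2}$ and the $\sigma$-interval intersection $\lesssim L_{min}$, the effective measure becomes $\lesssim H_{min}^{1/2}(L_{max}/H_{max})L_{min}$, which simultaneously produces the gain $H_{max}^{-1/2}$ and a second modulation factor. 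Tracking which modulation is consumed by the change of variables (the one living on the paired high-frequency surface) versus which survives yields the dichotomy: when the minimal $H$ and the maximal $L$ sit on the same factor the surviving pair is $L_{min}^{1/2}L_{max}^{1/2}$, giving \eqref{prop.bilinear.0}, and otherwise it is $L_{min}^{1/2}L_{med}^{1/2}$, giving \eqref{prop.bilinear.001}.

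Part (3) is where I expect the main difficulty. Here $H_{min}\sim H_{max}$, so there is no first-order $h$-gap and the transversality of the previous paragraph degenerates; this is exactly the resonant regime of the equation. I would instead exploit the extra localizations $\xi_i\in I_{N_i}$: having fixed the two smallest modulations (contributing $L_{med}^{1/2}L_{max}^{1/2}$), I would bound the two-dimensional area of the frequency region cut out by $|\sigma_3|\le L_{max}$ using the $\mu$-transversality $\partial_{\mu_i}\Omega=2(\xi_3\mu_3-\xi_i\mu_i)$ and, where that degenerates, the curvature $\partial_{\xi_i}^2\Omega=\alpha(\alpha+1)(\sgn\xi_3|\xi_3|^{\alpha-1}-\sgn\xi_i|\xi_i|^{\alpha-1})$, balancing these against the $I_{N_i}$-widths to extract the factor $N_{max}^{-\alpha/2}$. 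The hard part will be that, because the cancellation zone of $\Omega$ is large (as noted in the introduction), this area estimate has to be carried out uniformly, splitting according to whether $\mu^2$ or $|\xi|^\alpha$ dominates $h$ and according to the relative sizes of the $N_i$, so that the full $N_{max}^{-\alpha/2}$ is always recovered; the unavoidable endpoint in these sub-level-set counts is what forces the harmless loss $H_{min}^{(1/4)+}$ in \eqref{prop.bilinear.01}.
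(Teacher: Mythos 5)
Your parts (1) and (2) are correct and are in substance the paper's own argument: part (1) is the Cauchy--Schwarz/support-measure bound (the paper packages it via Young's inequality in the modulation variable together with $|\Delta_{H}|\sim H^{\frac 1\alpha+\frac 12}$), and part (2) is the paper's change of variables sending one frequency variable to $\Omega$, with Jacobian $|h(\zeta_1)-h(\zeta_2)|\sim H_{max}$, which you phrase equivalently as a sub-level-set estimate. Your explanation of the dichotomy \eqref{prop.bilinear.0} versus \eqref{prop.bilinear.001} --- the consumed modulation necessarily sits on a factor with $h\sim H_{max}$, so $L_{max}$ can be consumed exactly when it does not sit on the low-frequency factor --- is precisely the paper's mechanism.

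Part (3) is where the real content of the proposition lies, and there your proposal has a genuine gap: everything after ``I would instead exploit the extra localizations'' is a plan, not a proof, and the plan as stated would not close. After reducing to the region where all $|\xi_i\mu_i|$ are comparable and all $\mu_i^2\sim H$ (outside it one indeed has either $|\Omega|\gtrsim N_{max}^{\alpha+1}$ or $\mu$-transversality $\gtrsim N_{max}^{1+\alpha/2}$, as you say), the paper does \emph{not} run a transversality/curvature argument. It decomposes according to proximity to the set $A_\delta$, where the Hessian of $\omega$ degenerates, treats pairs of frequencies lying in $A_\delta$ by a sign analysis together with the dedicated Lemma \ref{tech}, and --- this is the key imported ingredient, completely absent from your write-up --- treats the region $\mathcal{D}_3^4$ where two frequencies lie outside $A_\delta$ with the localized Strichartz estimate of Corollary \ref{l4}, i.e.\ with all of Section \ref{stri-sec}. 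Incidentally, the loss $H_{min}^{(1/4)+}$ in \eqref{prop.bilinear.01} comes from the $\eps$-loss in that Strichartz estimate, not from an ``unavoidable endpoint in sub-level-set counts'' as you assert.

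Concretely, two steps of your plan break down. First, the $\mu$-transversality $|\partial_{\mu_i}\Omega|=2|\xi_3\mu_3-\xi_i\mu_i|$ of a \emph{fixed} pairing genuinely degenerates on a large subset of this region: already for $\alpha=1$ there are two-parameter families with $\xi_1\xi_2<0$, $\mu_1\mu_2<0$, $h(\zeta_1)=h(\zeta_2)=h(\zeta_3)$ and $\xi_1\mu_1=\xi_2\mu_2$ exactly. Second, your fallback there, the curvature $\partial_{\xi_i}^2\Omega=\alpha(\alpha+1)(\sgn\xi_3|\xi_3|^{\alpha-1}-\sgn\xi_i|\xi_i|^{\alpha-1})$, is unusable in the hardest case: for $\alpha=1$ it vanishes identically whenever the two relevant $\xi$'s have the same sign, and even where it is nondegenerate a second-derivative sub-level count only produces a factor $(L_{max}/N_{max})^{1/2}$, which closes the estimate only if one also proves a lower bound on $|\Omega|$ on the degenerate set --- which is again the missing case analysis. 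For what it is worth, the difficulty can in fact be repaired without Strichartz: writing $\eta_3=-\zeta_3$ so that $\eta_1+\eta_2+\eta_3=0$, one can check that on $\mathcal{D}_3$ either two of the products $\xi_i\mu_i$ have opposite signs, or all three have the same sign and then the one whose $\xi$ carries the minority sign exceeds each of the other two by at least $\min_i|\xi_i\mu_i|\gtrsim N_{max}H^{1/2}$; hence \emph{some} pairwise $\mu$-transversality $\gtrsim N_{max}H^{1/2}$ always survives, and switching the pairing region by region then yields \eqref{prop.bilinear.01}. But neither this covering argument nor the paper's Strichartz route appears in your proposal, and without one of them part (3) --- the estimate the whole short-time scheme hinges on --- remains unproved.
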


Before proving Proposition \ref{prop.bilinear} we give a technical lemma.
\begin{lemma}\label{tech}
  Assume $0<\delta<1$. Then we have that
  \begin{equation}\label{tech.1}
    h(\zeta_1+\zeta_2) \le |h(\zeta_1)-h(\zeta_2)| + f(\delta) \max(h(\zeta_1), h(\zeta_2)),
  \end{equation}
  for all $\zeta_i=(\xi_1,\mu_i)\in \R^2$, $i=1,2$ satisfying
  \begin{equation}\label{tech.2}
    (\xi_1,\mu_1), (\xi_2,\mu_2) \in A_\delta
  \end{equation}
  and
  $$
    \xi_1\xi_2<0 \textrm{ and } \mu_1\mu_2<0,
  $$
  and where $f$ is a continuous function on $[0,1]$ satisfying $\lim_{\delta\to 0}f(\delta)=0$.
\end{lemma}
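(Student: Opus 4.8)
The plan is to reduce everything to the magnitudes of the frequencies and to compare $h(\zeta_1+\zeta_2)$ with its value on the critical curve $\mu^2=B|\xi|^\alpha$, where the statement collapses to a clean one-variable inequality. First I would record that the sign conditions $\xi_1\xi_2<0$ and $\mu_1\mu_2<0$ force all four of $\xi_1,\xi_2,\mu_1,\mu_2$ to be nonzero and give $|\xi_1+\xi_2|=\bigl||\xi_1|-|\xi_2|\bigr|$ and $|\mu_1+\mu_2|=\bigl||\mu_1|-|\mu_2|\bigr|$; since $h$ depends only on $|\xi|$ and $|\mu|$, the whole inequality involves the magnitudes alone. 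After possibly swapping $\zeta_1\leftrightarrow\zeta_2$ (which leaves every quantity in the statement unchanged) I may assume $a_1:=|\xi_1|^\alpha\ge a_2:=|\xi_2|^\alpha>0$. Writing the membership $\zeta_i\in A_\delta$ as $\mu_i^2=(B+e_i)a_i$ with $|e_i|\le\delta$, one finds $h(\zeta_i)=(\alpha+1+B+e_i)a_i$ and, expanding the square,
$$
h(\zeta_1+\zeta_2)=(\alpha+1)\bigl(|\xi_1|-|\xi_2|\bigr)^\alpha+(B+e_1)a_1+(B+e_2)a_2-2\sqrt{(B+e_1)(B+e_2)}\,\sqrt{a_1a_2}.
$$

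The heart of the matter is the case $e_1=e_2=0$ (both points exactly on the critical curve), where I claim $h(\zeta_1+\zeta_2)\le|h(\zeta_1)-h(\zeta_2)|=(\alpha+1+B)(a_1-a_2)$. Setting $t=|\xi_2|/|\xi_1|\in(0,1]$, dividing by $a_1$, and using $2B=\alpha(\alpha+1)$ (so that $\alpha+1+2B=(\alpha+1)^2$ and $2B/(\alpha+1)=\alpha$), this is \emph{equivalent} to
$$
\Psi(t):=1-(1-t)^\alpha-(\alpha+1)t^\alpha+\alpha\,t^{\alpha/2}\ge 0,\qquad t\in[0,1].
$$
I would prove this by concavity. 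One has $\Psi(0)=\Psi(1)=0$, and differentiating twice gives
$$
\Psi''(t)=-\alpha(\alpha-1)(1-t)^{\alpha-2}-\alpha(\alpha+1)(\alpha-1)t^{\alpha-2}+\tfrac{\alpha^2}{2}\bigl(\tfrac{\alpha}{2}-1\bigr)t^{\alpha/2-2}.
$$
Here the hypothesis $1\le\alpha\le 2$ enters decisively: it makes $\alpha-1\ge0$ and $\tfrac{\alpha}{2}-1\le0$, so all three terms are $\le 0$ on $(0,1)$. Thus $\Psi$ is concave with vanishing endpoint values, whence $\Psi\ge 0$ on $[0,1]$. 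This concavity step is the main (indeed the only genuine) obstacle; everything else is bookkeeping, and it is exactly here that the range of $\alpha$ is exploited.

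It then remains to absorb the $e_i$-dependent contributions into an error of the form $f(\delta)\max(h(\zeta_1),h(\zeta_2))$. Writing $h(\zeta_1+\zeta_2)$ as its value at $e_1=e_2=0$ plus a remainder $R$, the bounds $|e_i|\le\delta$, $\bigl|\sqrt{(B+e_1)(B+e_2)}-B\bigr|\lesssim\delta$ (Lipschitz estimate for $\sqrt{\cdot}$ away from $0$, using $B\ge 1$), and $\sqrt{a_1a_2}\le a_1$ yield $|R|\le C_\alpha\,\delta\,a_1$. Similarly $(\alpha+1+B)(a_1-a_2)=\bigl(h(\zeta_1)-h(\zeta_2)\bigr)-(e_1a_1-e_2a_2)\le|h(\zeta_1)-h(\zeta_2)|+2\delta a_1$. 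Combining these two facts with the critical-curve inequality gives $h(\zeta_1+\zeta_2)\le|h(\zeta_1)-h(\zeta_2)|+C_\alpha'\,\delta\,a_1$, and since $a_1\le(\alpha+B)^{-1}h(\zeta_1)\le(\alpha+B)^{-1}\max(h(\zeta_1),h(\zeta_2))$ for $\delta<1$, the conclusion follows with $f(\delta)=C_\alpha'(\alpha+B)^{-1}\delta$, which is continuous on $[0,1]$ and satisfies $\lim_{\delta\to 0}f(\delta)=0$.
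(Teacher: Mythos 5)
Your proof is correct, and it takes a genuinely different route from the paper's. The paper fixes the signs, assumes $h(\zeta_1)\ge h(\zeta_2)$, reduces the lemma to the defect bound \eqref{tech.4}, and proves that bound by estimating its three pieces \emph{separately} --- $\mu_2(\mu_1+\mu_2)$, $|\xi_1+\xi_2|^\alpha$ and $|\xi_2|^\alpha-|\xi_1|^\alpha$ --- each by a quantity $f_i(\delta)$ times $|\xi_1|^\alpha$ or $|\xi_2|^\alpha$ with $f_i(\delta)\to 0$, the only structural input being the one-sided bound \eqref{tech.5}. You instead place both frequencies exactly on the critical curve $\mu^2=B|\xi|^\alpha$, observe that there the claim is \emph{equivalent} to the one-variable inequality $\Psi(t)\ge 0$ on $[0,1]$, prove this by concavity (the decisive use of $1\le\alpha\le 2$), and absorb the $O(\delta)\max(h(\zeta_1),h(\zeta_2))$ off-curve corrections at the end; I checked the reduction, the formula for $\Psi''$, and the perturbative bookkeeping, and they are sound. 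As for what each approach buys: the paper's is shorter and calculus-free, but yours is sharper (explicit linear rate $f(\delta)=C_\alpha\delta$, with the equality cases $t=0,1$ visible) and uniform in the ratio $t=|\xi_2|/|\xi_1|\in(0,1]$ --- and this uniformity is not cosmetic. The paper's intermediate estimate $|\xi_1+\xi_2|^\alpha=\bigl||\xi_1|-|\xi_2|\bigr|^\alpha\le\bigl(g(\delta)^{1/\alpha}-1\bigr)^\alpha|\xi_1|^\alpha$ uses \eqref{tech.5} as if it were two-sided; it fails when $|\xi_2|\ll|\xi_1|$, a configuration the hypotheses allow, since the ordering $h(\zeta_1)\ge h(\zeta_2)$ gives no lower bound on $|\xi_2|$. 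In that regime the lemma survives only because of cancellation against the negative cross term $2\mu_2(\mu_1+\mu_2)$, which is precisely the cancellation your function $\Psi$ encodes globally; so your argument covers a regime where the paper's term-by-term estimates, as written, do not apply. One small repair to your own write-up: to bound $\bigl|\sqrt{(B+e_1)(B+e_2)}-B\bigr|$, avoid the Lipschitz estimate for $\sqrt{\cdot}$ (when $\alpha=1$, so $B=1$, its constant degenerates as $\delta\to 1$); instead, since $B+e_i\in[B-\delta,B+\delta]$ and $B-\delta>0$, monotonicity gives $\sqrt{(B+e_1)(B+e_2)}\in[B-\delta,B+\delta]$, hence the bound $\delta$ with a constant uniform over $\delta\in(0,1)$, which is what makes your $f(\delta)=C_\alpha'(\alpha+B)^{-1}\delta$ continuous on all of $[0,1]$.
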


\begin{proof}
  Without loss of generality, we may assume
  \begin{equation}\label{tech.3}
  \xi_1>0,\ \mu_1>0,\ \xi_2<0,\ \mu_2<0\textrm{ and } h(\zeta_1)\ge h(\zeta_2).
  \end{equation}
  Thus, it suffices to prove that
  \begin{equation}\label{tech.4}
    (\alpha+1)(|\xi_1+\xi_2|^\alpha+|\xi_2|^\alpha-|\xi_1|^\alpha) + 2\mu_2(\mu_1+\mu_2)\le f(\delta)h(\zeta_1).
  \end{equation}
  Thanks to \eqref{tech.2} and \eqref{tech.3}, we have that
  \begin{equation}\label{tech.5}
  |\xi_2|^\alpha\le g(\delta)|\xi_1|^\alpha\textrm{ with } g(\delta)=\frac{\alpha+1+B+\delta}{\alpha+1+B-\delta}\xrightarrow[\delta\to 0]{}1.
  \end{equation}
  This implies that
  \begin{align*}
    \mu_2(\mu_1+\mu_2)&\le (B+\delta)|\xi_2|^\alpha - (B-\delta)^{1/2}|\xi_2|^{\alpha/2}\mu_1\\
    &\le (B+\delta)|\xi_2|^\alpha-(B-\delta)|\xi_1\xi_2|^{\alpha/2}\\
    &\le f_1(\delta)|\xi_2|^\alpha
  \end{align*}
  with
  $$
  f_1(\delta) = B+\delta-\frac{B-\delta}{g(\delta)^{1/2}} \xrightarrow[\delta\to 0]{}0.
  $$
  On the other hand, using \eqref{tech.5} again we infer
  $$
  |\xi_1+\xi_2|^\alpha = \left| |\xi_1|-|\xi_2| \right|^\alpha \le f_2(\delta)|\xi_1|^\alpha\textrm{ with } f_2(\delta) = \left(g(\delta)^{1/\alpha}-1\right)^\alpha\xrightarrow[\delta\to 0]{}0
  $$
  and
  $$
  |\xi_2|^\alpha-|\xi_1|^\alpha\le |\xi_1|^\alpha \le f_3(\delta)|\xi_1|^\alpha\textrm{ with } f_3(\delta)=g(\delta)-1\xrightarrow[\delta\to 0]{}0.
  $$
  Estimate \eqref{tech.4} follows then by choosing $f=f_1+f_2+f_3$.
\end{proof}

\begin{proof}[Proof of Proposition \ref{prop.bilinear}]
 First we show part (1). We observe that
 \begin{equation}\label{prop.bilinear.1}
 I:= \int_{\R^3} (f_1\ast f_2)\cdot f_3 = \int_{\R^3} (\widetilde{f_1}\ast f_3)\cdot f_2 = \int_{\R^3} (\widetilde{f_2}\ast f_3)\cdot f_1
 \end{equation}
 where $\widetilde{f_i}(\tau,\zeta) = f_i(-\tau,-\zeta)$. Therefore we can always assume that $L_1=L_{min}$. Moreover, let us define $f_i^\sharp(\theta,\zeta) = f_i(\theta+\omega(\zeta),\zeta)$ for $i=1,2,3$.
 In view of the assumptions on $f_i$, the functions $f_i^\sharp$ are supported in the sets
$$
D_{H_i,\infty,L_i}^\sharp = \{(\theta,\xi,\mu)\in\R^3: (\xi,\mu)\in \Delta_{H_i} \textrm{ and } |\theta|\le L_i\}.
$$
We also note that $\|f_i\|_{L^2} = \|f_i^\sharp\|_{L^2}$. Then it follows that
\begin{equation}\label{prop.bilinear.2}
I = \int_{\R^6} f_1^\sharp(\theta_1,\zeta_1) f_2^\sharp(\theta_2,\zeta_2) f_3^\sharp(\theta_1+\theta_2+\Omega(\zeta_1,\zeta_2), \zeta_1+\zeta_2) d\theta_1 d\theta_2 d\zeta_1 d\zeta_2.
\end{equation}
For $i=1,2,3$, we define $F_i(\zeta) = \left(\int_\R f_i^\sharp(\theta,\zeta)^2d\theta\right)^{1/2}$. Thus  applying the Cauchy-Schwarz and Young inequalities in the $\theta$ variable we get
\begin{align}
  I &\lesssim \int_{\R^4} \|f_1^\sharp(\cdot, \zeta_1)\|_{L^1_{\theta_1}} F_2(\zeta_2) F_3(\zeta_1+\zeta_2) d\zeta_1 d\zeta_2 \nonumber \\
  &\lesssim L_1^{1/2} \int_{\R^4} F_1(\zeta_1) F_2(\zeta_2) F_3(\zeta_1+\zeta_2) d\zeta_1 d\zeta_2. \label{prop.bilinear.2.0}
\end{align}
Since $\|\zeta\|\lesssim h(\zeta)^{\frac 1{\alpha}+\frac 12}$, estimate (\ref{prop.bilinear.00}) is deduced from (\ref{prop.bilinear.2.0}) by applying the same arguments in the $\xi, \mu$ variables.

Next we turn to the proof of part (2).
From (\ref{prop.bilinear.1}), we may assume $H_{min}=H_2$ and $L_{max}\neq L_1$, so that $H_2\ll H_1\sim H_3$. It suffices to prove that if $g_i:\R^2\to \R_+$ are $L^2$ functions supported in $\Delta_{H_i}$ for $i=1,2$ and $g:\R^3\to \R_+$ is an $L^2$ function supported in $D_{H_3,\infty, L_3}^\sharp$, then
\begin{equation}\label{prop.bilinear.3}
J(g_1,g_2,g) := \int_{\R^4} g_1(\zeta_1) g_2(\zeta_2) g(\Omega(\zeta_1,\zeta_2), \zeta_1+\zeta_2) d\zeta_1 d\zeta_2
\end{equation}
satisfies
\begin{equation}\label{prop.bilinear.4}
J(g_1,g_2,g) \lesssim H_1^{-1/2} H_2^{1/4} \|g_1\|_{L^2} \|g_2\|_{L^2} \|g\|_{L^2}.
\end{equation}
Indeed, if estimate (\ref{prop.bilinear.4}) holds, let us define $g_i(\zeta_i) = f_i^\sharp(\theta_i,\zeta_i)$, $i=1,2$, and $g(\Omega,\zeta) = f_3^\sharp(\theta_1+\theta_2+\Omega, \zeta)$ for $\theta_1$ and $\theta_2$ fixed. Hence, we would deduce applying (\ref{prop.bilinear.4}) and the Cauchy-Schwarz inequality to (\ref{prop.bilinear.2}) that
\begin{align}
 I &\lesssim H_1^{-1/2}H_2^{1/4} \|f_3^\sharp\|_{L^2} \int_{\R^2} \|f_1^\sharp(\theta_1, \cdot)\|_{L^2_\zeta} \|f_2^\sharp(\theta_2,\cdot)\|_{L^2_\zeta} d\theta_1 d\theta_2 \nonumber \\
 & \lesssim H_1^{-1/2} H_2^{1/4} L_1^{1/2} L_2^{1/2} \|f_1^\sharp\|_{L^2} \|f_2^\sharp\|_{L^2} \|f_3^\sharp\|_{L^2} \label{prop.bilinear.5},
\end{align}
which implies (\ref{prop.bilinear.0}) and \eqref{prop.bilinear.001}. To prove estimate (\ref{prop.bilinear.4}), we apply twice the Cauchy-Schwarz inequality to get that
$$
J(g_1,g_2,g) \le \|g_1\|_{L^2} \|g_2\|_{L^2} \left( \int_{\Delta_{H_1}\times\Delta_{H_2}} g(\Omega(\zeta_1,\zeta_2), \zeta_1+\zeta_2)^2 d\zeta_1 d\zeta_2\right)^{1/2}.
$$
Then we change variables $(\zeta_1',\zeta_2') = (\zeta_1+\zeta_2, \zeta_2)$, so that
\begin{equation}\label{prop.bilinear.6}
J(g_1,g_2,g) \le \|g_1\|_{L^2} \|g_2\|_{L^2} \left( \int_{\Delta_{\sim H_1}\times\Delta_{H_2}} g(\Omega(\zeta_1'-\zeta_2',\zeta_2'), \zeta_1')^2 d\zeta_1' d\zeta_2'\right)^{1/2}.
\end{equation}
Making the change of variable $(\xi_1, \mu_1, \xi_2, \mu_2) = (\xi_1', \mu_1', \Omega(\zeta_1'-\zeta_2',\zeta_2'), \mu_2')$, and noting that the Jacobi determinant satisfies
$$
|\partial_{\xi_2'}\Omega(\zeta_1'-\zeta_2',\zeta_2')| = |h(\zeta_1'-\zeta_2') - h(\zeta_2')| \sim H_{max},
$$
we get
$$
J(g_1,g_2,g) \lesssim H_1^{-1/2} \|g_1\|_{L^2} \|g_2\|_{L^2} \left(\int_{\R^3\times [-cH_2^{1/2},cH_2^{1/2}]} g(\xi_2, \xi_1, \mu_1)^2 d\xi_1 d\mu_1 d\xi_2 d\mu_2\right)^{1/2},
$$
which lead to (\ref{prop.bilinear.4}) after integrating in $\mu_2$.

Now we show part (3) and assume that the functions $f_i^\sharp$ are supported in the sets
$$
D_{H_i,N_i,L_i}^\sharp = \{(\theta,\xi,\mu)\in\R^3: \xi\in I_{N_i},\ (\xi,\mu)\in \Delta_{H_i} \textrm{ and } |\theta|\le L_i\}.
$$
In order to simplify the notations, we will denote $\zeta_3 = \zeta_1+\zeta_2$. We split the integration domain in the following subsets:
\begin{align*}
  \mathcal{D}_1 &= \{(\theta_1, \zeta_1, \theta_2,\zeta_2) \in \R^6 : \forall i\in\{1,2,3\}, \mu_i^2\ll |\xi_i|^\alpha\},\\
  \mathcal{D}_2 &= \{(\theta_1, \zeta_1, \theta_2,\zeta_2) \in \R^6\setminus \mathcal{D}_1 : \min_{1\le i\le 3}|\xi_i\mu_i|\ll \max_{1\le i\le 3}|\xi_i\mu_i|\},\\
  \mathcal{D}_3 &= \R^6\setminus \bigcup_{j=1}^2 \mathcal{D}_j.
\end{align*}
Then, if we denote by $I^j$ the restriction of $I$ given by (\ref{prop.bilinear.2}) to the domain $\mathcal{D}_j$, we have that
$$
I = \sum_{j=1}^3 I_j.
$$

\noindent
\textit{Estimate for $I_1$}. From (\ref{prop.bilinear.1}) we may assume $L_{max}=L_3$. Since $H_{min}\sim H_{max}$, it follows that $N_{min}\sim N_{max}$ and
\begin{align}
  |\Omega(\zeta_1,\zeta_2)| &= (|\xi_1+\xi_2|^\alpha(\xi_1+\xi_2) - |\xi_1|^\alpha\xi_1 - |\xi_2|^\alpha\xi_2) + (\xi_1+\xi_2)(\mu_1+\mu_2)^2 - \xi_1\mu_1^2 - \xi_2\mu_2^2 \nonumber\\
  &=  (|\xi_1+\xi_2|^\alpha(\xi_1+\xi_2) - |\xi_1|^\alpha\xi_1 - |\xi_2|^\alpha\xi_2) + 2\mu_1\mu_2 (\xi_1+\xi_2) + \xi_1\mu_2^2+\xi_2\mu_1^2 \label{prop.bilinear.9.0} \\
  &\sim N_{max}^{\alpha+1} \nonumber
\end{align}
in the region $\mathcal{D}_1$. We infer that $I_1$ is non zero only for $L_3\gtrsim N_{max}^{\alpha+1}$ and it suffices to show that
\begin{equation}\label{prop.bilinear.10}
  I_1 \lesssim N_{min}^{1/2} H_{min}^{1/4} L_{med}^{1/2} \|f_1^\sharp\|_{L^2} \|f_2^\sharp\|_{L^2} \|f_3^\sharp\|_{L^2}
\end{equation}
Arguing as in \eqref{prop.bilinear.2.0} we obtain estimate (\ref{prop.bilinear.10}).\\

\noindent
\textit{Estimate for $I_2$.} By definition of $\mathcal{D}_2$, there exists $i\in\{1,2,3\}$ such that $|\xi_i|^\alpha\lesssim \mu_i^2$. It follows that for any $j\in \{1,2,3\}$, we have $|\xi_j|^\alpha\lesssim H_j\sim H_i\sim \mu_i^2$ and therefore $N_{max}^\alpha\lesssim \max_{1\le j\le 3}\mu_j^2$. Moreover observe that since $N_{max}\sim N_{med}$ and $\max_{1\le j\le 3}|\mu_j|\sim \textrm{med}_{1\le j\le 3}|\mu_j|$, it holds that
$$
\max_{1\le j\le 3} |\xi_j\mu_j| \sim \max_{1\le j\le 3}|\xi_j| \max_{1\le j\le 3} |\mu_j|.
$$
From (\ref{prop.bilinear.1}) we may always assume $\min_{1\le j\le 3}|\xi_j\mu_j| = |\xi_1\mu_1|$ and $\max_{1\le j\le 3}|\xi_j\mu_j| = |\xi_2\mu_2|$. We deduce that in $\mathcal{D}_2$, it holds
$$
  |\partial_{\mu_2'}\Omega(\zeta_1'-\zeta_2',\zeta_2')| =  2|\xi_1\mu_1-\xi_2\mu_2|\gtrsim N_{max}^{1+\frac \alpha 2}
$$
where $(\zeta_1',\zeta_2') = (\zeta_1+\zeta_2,\zeta_2)$.
Changing the variable $(\xi_1,\mu_1,\xi_2,\mu_2) = (\xi_1',\mu_1',\xi_2', \Omega(\zeta_1'-\zeta_2',\zeta_2'))$ in (\ref{prop.bilinear.6}) we infer
$$
J_2(g_1,g_2,g) \lesssim N_{max}^{-\frac 12-\frac \alpha 4} \|g_1\|_{L^2} \|g_2\|_{L^2} \left(\int_{\R^2\times I_{N_2}\times\R} g(\mu_2, \xi_1, \mu_1)^2 d\xi_1 d\mu_1 d\xi_2 d\mu_2\right)^{1/2},
$$
where $J_2$ is the restriction of the integral $J$ defined by \eqref{prop.bilinear.3} to the domain $\mathcal{D}_2$.
This leads to
$$
I_2 \lesssim N_{max}^{-\alpha/4} L_{med}^{1/2}L_{max}^{1/2} \|f_1\|_{L^2} \|f_2\|_{L^2} \|f_3\|_{L^2},
$$
which is acceptable since $N_{max}^{\alpha/4}\lesssim H_{max}^{1/4}\sim H_{min}^{1/4}$.\\

\noindent
\textit{Estimate for $I_3$.} First we notice that in $\mathcal{D}_3$, we have
 $$
 N_{min}^\alpha\sim N_{max}^\alpha \lesssim \min_{1\le i\le 3}\mu_i^2\sim \max_{1\le i\le 3}\mu_i^2.
 $$
 Let $0<\delta \ll 1$ be a small positive number such that $f(\delta)=\frac{1}{1000}$ where $f$ is defined in Lemma \ref{tech}.  We split again the integration domain $\mathcal{D}_3$ in the following subsets:
\begin{align*}
  \mathcal{D}_3^1 &= \{(\theta_1, \zeta_1, \theta_2,\zeta_2) \in \mathcal{D}_3: \zeta_1,\zeta_2\in A_\delta\},\\
  \mathcal{D}_3^2 &= \{(\theta_1, \zeta_1, \theta_2,\zeta_2) \in \mathcal{D}_3: \zeta_2,\zeta_3\in A_\delta\},\\
  \mathcal{D}_3^3 &= \{(\theta_1, \zeta_1, \theta_2,\zeta_2) \in \mathcal{D}_3: \zeta_1,\zeta_3\in A_\delta\},\\
  \mathcal{D}_3^4 &= \mathcal{D}_3\setminus \bigcup_{j=1}^3 \mathcal{D}_3^j.
\end{align*}
Then, if we denote by $I_3^j$ the restriction of $I_3$ to the domain $\mathcal{D}_3^j$, we have that
$$
I_3 = \sum_{j=1}^4 I_3^j.
$$

\noindent
\textit{Estimate for $I_3^1$.} We consider the following subcases.
\begin{enumerate}
  \item \textit{Case $\{\xi_1\xi_2>0 \textrm{ and } \mu_1\mu_2>0\}$.} We define
  $$
  \mathcal{D}_3^{1,1} = \{(\theta_1, \zeta_1, \theta_2,\zeta_2) \in \mathcal{D}_3^1: \xi_1\xi_2>0 \textrm{ and } \mu_1\mu_2>0\}
  $$
  and denote by $I_3^{1,1}$ the restriction of $I_3^1$ to the domain $\mathcal{D}_3^{1,1}$. We observe from (\ref{prop.bilinear.9.0}) that
  $$
  L_{max}\gtrsim |\Omega(\zeta_1,\zeta_2)| \gtrsim N_{max}^{\alpha+1}
  $$
  in the region $\mathcal{D}_3^{1,1}$. Therefore, it follows arguing exactly as in (\ref{prop.bilinear.10}) that
  \begin{equation}
    I_3^{1,1} \lesssim N_{max}^{-\alpha/2} H_{min}^{1/4} L_{med}^{1/2} L_{max}^{1/2} \|f_1\|_{L^2}\|f_2\|_{L^2} \|f_3\|_{L^2}.
  \end{equation}
  \item \textit{Case $\{\xi_1\xi_2>0 \textrm{ and } \mu_1\mu_2<0\}$ or $\{\xi_1\xi_2<0 \textrm{ and } \mu_1\mu_2>0\}$.}
  We define
  $$
  \mathcal{D}_3^{1,2} = \{(\theta_1, \zeta_1, \theta_2,\zeta_2) \in \mathcal{D}_3^1: \xi_1\xi_2>0, \mu_1\mu_2<0 \textrm{ or } \xi_1\xi_2<0, \mu_1\mu_2>0\}
  $$
  and denote by $I_3^{1,2}$ the restriction of $I_3^1$ to the domain $\mathcal{D}_3^{1,2}$. Observe that
  $$
  |\partial_{\mu_2'}\Omega(\zeta_1'-\zeta_2',\zeta_2')| = 2 |\xi_1\mu_1-\xi_2\mu_2|\gtrsim N_{max}^{1+\alpha/2}
  $$
  in the region $\mathcal{D}_3^{1,2}$, where $(\zeta_1',\zeta_2') = (\zeta_1+\zeta_2,\zeta_2)$. Thus, arguing as in the proof of (\ref{prop.bilinear.4}), we get that the restriction of $J$ to $\mathcal{D}_3^{1,2}$ satisfies
  $$
  J_3^{1,2}(g_1,g_2,g) \lesssim N_{max}^{-\alpha/4}\|g_1\|_{L^2} \|g_2\|_{L^2} \|g\|_{L^2},
  $$
  which leads to
  \begin{equation}
    I_3^{1,2} \lesssim N_{max}^{-\alpha/2} H_{min}^{1/4} L_{med}^{1/2} L_{max}^{1/2} \|f_1\|_{L^2}\|f_2\|_{L^2} \|f_3\|_{L^2},
  \end{equation}
  since $N_{max}^{\alpha/4} \lesssim H_{max}^{1/4}\sim H_{min}^{1/4}$.
  \item \textit{Case $\{\xi_1\xi_2<0 \textrm{ and } \mu_1\mu_2<0\}$.}
  We define
  $$
  \mathcal{D}_3^{1,3} = \{(\theta_1, \zeta_1, \theta_2,\zeta_2) \in \mathcal{D}_3^1: \xi_1\xi_2<0 \textrm{ and } \mu_1\mu_2<0\}
  $$
  and denote by $I_3^{1,3}$ the restriction of $I_3^1$ to the domain $\mathcal{D}_3^{1,3}$. We observe due to the frequency localization that there exists some $0<\gamma \ll 1$ such that
  \begin{equation}\label{prop.bilinear.12}
    |h(\zeta_1)-h(\zeta_2)| \ge \gamma \max(h(\zeta_1), h(\zeta_2))
  \end{equation}
  in $\mathcal{D}_3^{1,3}$. Indeed, if estimate (\ref{prop.bilinear.12}) does not hold for all $0<\gamma\le \frac{1}{1000}$, then estimate (\ref{tech.1}) with $f(\delta)=\frac{1}{1000}$ would imply that
  $$
  h(\zeta_3)\le \frac{1}{500} \max(h(\zeta_1), h(\zeta_2))
  $$
  which would be a contradiction since $H_{min}\sim H_{max}$.
  Thus we deduce from (\ref{prop.bilinear.12}) that
  $$
  |\partial_{\xi_2'}\Omega(\zeta_1'-\zeta_2',\zeta_2')| = |h(\zeta_1)-h(\zeta_2)|\gtrsim H_{max}
  $$
  in the region $\mathcal{D}_3^{1,3}$, where $(\zeta_1',\zeta_2') = (\zeta_1+\zeta_2,\zeta_2)$. We can then reapply the arguments in the proof of (\ref{prop.bilinear.4}) to show that
  \begin{equation}
    I_3^{1,3} \lesssim N_{max}^{-\alpha/2} H_{min}^{1/4} L_{med}^{1/2} L_{max}^{1/2} \|f_1\|_{L^2}\|f_2\|_{L^2} \|f_3\|_{L^2}
  \end{equation}
\end{enumerate}

\noindent
\textit{Estimate for $I_3^2$ and $I_3^3$.} The estimates for these terms follow the same lines as for $I_3^1$.

\noindent
\textit{Estimate for $I_3^4$.} Without loss of generality, we can assume that $\zeta_1,\zeta_2 \in \R^2\setminus A_{\delta}$. Then we may take advantage of the improved Strichartz estimates derived in Section \ref{stri-sec}. We deduce from Plancherel's identity and H\"older's inequality that
$$
I_3^4 \lesssim \|f_3\|_{L^2} \|(1_{\R^2\setminus A_\delta}f_1)\ast (1_{\R^2\setminus A_\delta}f_2)\|_{L^2} \lesssim \|f_3\|_{L^2} \|P_{A_\delta^c}\F^{-1}(f_1)\|_{L^4} \|P_{A_\delta^c}\F^{-1}(f_2)\|_{L^4}.
$$
We conclude from Corollary \ref{l4} that
$$
I_3^4 \lesssim N_{max}^{(-\alpha/4)+} L_{med}^{1/2}L_{max}^{1/2} \|f_1\|_{L^2}\|f_2\|_{L^2} \|f_3\|_{L^2},
$$
which is acceptable since $N_{max}^{\alpha/4}\lesssim H_{min}^{1/4}$.
\end{proof}

As a consequence of Proposition \ref{prop.bilinear}, we have the following $L^2$ bilinear estimates.
\begin{corollary}\label{bilinear}
Assume that $H_i, N_i, L_i\in\D$ are dyadic numbers and $f_i:\R^3\to\R_+$ are $L^2$ functions for $i=1,2,3$.
\begin{enumerate}
\item If $f_i$  are supported in $D_{H_i, \infty,L_i}$ for $i=1,2,3$, then
\begin{equation}\label{bilinear.0}
\|1_{D_{H_3, \infty,L_3}}(f_1\ast f_2)\|_{L^2} \lesssim H_{min}^{\frac 1{2\alpha}+\frac 14} L_{min}^{1/2} \|f_1\|_{L^2}\|f_2\|_{L^2}.
\end{equation}
\item Let us suppose that $H_{min}\ll H_{max}$ and $f_i$  are supported in $D_{H_i, \infty,L_i}$ for $i=1,2,3$. If $(H_i,L_i)=(H_{min},L_{max})$ for some $i\in\{1,2,3\}$, then
\begin{equation}\label{bilinear.1}
\|1_{D_{H_3, \infty,L_3}}(f_1\ast f_2)\|_{L^2} \lesssim H_{max}^{-1/2} H_{min}^{1/4} L_{min}^{1/2}L_{max}^{1/2} \|f_1\|_{L^2}\|f_2\|_{L^2}.
\end{equation}
Otherwise, we have
\begin{equation}\label{bilinear.01}
\|1_{D_{H_3, \infty,L_3}}(f_1\ast f_2)\|_{L^2} \lesssim H_{max}^{-1/2} H_{min}^{1/4} L_{min}^{1/2}L_{med}^{1/2} \|f_1\|_{L^2}\|f_2\|_{L^2}.
\end{equation}
\item If $H_{min}\sim H_{max}$ and $f_i$ are  supported in $D_{H_i, N_i,L_i}$ for $i=1,2,3$, then
\begin{equation}\label{bilinear.2}
\|1_{D_{H_3, N_3,L_3}}(f_1\ast f_2)\|_{L^2} \lesssim N_{max}^{-\alpha/2} H_{min}^{(1/4)+} L_{med}^{1/2}L_{max}^{1/2} \|f_1\|_{L^2}\|f_2\|_{L^2}.
\end{equation}
\end{enumerate}
\end{corollary}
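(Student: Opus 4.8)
The plan is to obtain each of the three estimates as a direct dual reformulation of the corresponding trilinear bound already proved in Proposition \ref{prop.bilinear}, using crucially that $f_1,f_2\ge 0$ so that the restricted convolution is itself a legitimate nonnegative test function. Since no new analysis is needed beyond Proposition \ref{prop.bilinear}, the whole argument reduces to a clean duality identity together with one cancellation of an $L^2$ factor.

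First I would treat part (1). Because $f_1,f_2$ are nonnegative, the function $1_{D_{H_3,\infty,L_3}}(f_1\ast f_2)$ is nonnegative and supported in $D_{H_3,\infty,L_3}$. Hence, by the $L^2$ duality characterization of the norm of a nonnegative function,
$$
\|1_{D_{H_3,\infty,L_3}}(f_1\ast f_2)\|_{L^2} = \sup\left\{\int_{\R^3} 1_{D_{H_3,\infty,L_3}}(f_1\ast f_2)\,f_3 \;:\; f_3\ge 0,\ \supp f_3\subset D_{H_3,\infty,L_3},\ \|f_3\|_{L^2}\le 1\right\},
$$
where restricting the supremum to nonnegative $f_3$ supported in $D_{H_3,\infty,L_3}$ is permitted precisely because the function on the left is itself nonnegative and supported there. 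Since $f_3$ already carries the indicator, I may drop $1_{D_{H_3,\infty,L_3}}$ and write $\int_{\R^3}(f_1\ast f_2)f_3$. Each such $f_3$ is exactly an admissible third input in Proposition \ref{prop.bilinear}(1), so \eqref{prop.bilinear.00} bounds this integral by $H_{min}^{\frac 1{2\alpha}+\frac 14}L_{min}^{1/2}\|f_1\|_{L^2}\|f_2\|_{L^2}\|f_3\|_{L^2}$, and taking the supremum over $\|f_3\|_{L^2}\le 1$ yields \eqref{bilinear.0}.

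Then I would run the identical argument for the remaining parts, changing only which estimate of Proposition \ref{prop.bilinear} is invoked. For part (2), the admissible test functions $f_3$ are again supported in $D_{H_3,\infty,L_3}$, so the triple of localizations entering the trilinear form is $(H_1,L_1),(H_2,L_2),(H_3,L_3)$; feeding $f_3$ into \eqref{prop.bilinear.0} when $(H_i,L_i)=(H_{min},L_{max})$ for some $i$, and into \eqref{prop.bilinear.001} otherwise, produces \eqref{bilinear.1} and \eqref{bilinear.01} respectively. For part (3) I would instead take $f_3$ nonnegative and supported in $D_{H_3,N_3,L_3}$, so that it carries the extra localization $\xi\in I_{N_3}$; inserting it into \eqref{prop.bilinear.01} gives \eqref{bilinear.2}. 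In every case the supremum over $\|f_3\|_{L^2}\le 1$ removes the single factor $\|f_3\|_{L^2}$ from the right-hand side.

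Since the reduction is a one-line duality, there is no substantive obstacle; the only point requiring care is the sign condition. It is exactly the nonnegativity of $f_1$ and $f_2$ (hence of $f_1\ast f_2$ and of its restriction to $D_{H_3,\cdot,L_3}$) that makes the duality supremum attainable over \emph{nonnegative} $f_3$ supported in the target region, which is what Proposition \ref{prop.bilinear} demands of its third argument. This also sidesteps any finiteness issue, since the supremum is taken over a fixed class of normalized test functions and each individual pairing is controlled directly by the trilinear estimate.
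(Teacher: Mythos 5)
Your proof is correct and is exactly the paper's argument: the paper proves Corollary \ref{bilinear} in one line, stating that it "follows directly from Proposition \ref{prop.bilinear} by using a duality argument," and your write-up simply makes that duality explicit (nonnegativity of $f_1,f_2$ lets you realize the $L^2$ norm of the restricted convolution as a supremum of pairings against nonnegative $f_3$ supported in the target region, to which the trilinear estimates apply). No further comment is needed.
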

\begin{proof}
Corollary \ref{bilinear} follows directly from Proposition \ref{prop.bilinear} by using a duality argument.
\end{proof}

\section{Short time bilinear estimates}\label{sec-bil}

\begin{proposition}\label{stbe}
\begin{enumerate}
\item If $s>1/4$, $T\in (0,1]$ and $u,v\in F^s(T)$, then
\begin{equation}\label{stbe.1}
\|\partial_x(uv)\|_{\N^s(T)} \lesssim \|u\|_{F^s(T)} \|v\|_{F^{(1/4)+}(T)} + \|u\|_{F^{(1/4)+}(T)}\|v\|_{F^s(T)}.
\end{equation}
\item If $s>1/4$, $T\in (0,1]$, $u\in F^0(T)$ and $v\in F^s(T)$, then
\begin{equation}\label{stbe.01}
\|\partial_x(uv)\|_{\N^0(T)} \lesssim \|u\|_{F^0(T)} \|v\|_{F^{s}(T)}.
\end{equation}
\end{enumerate}
\end{proposition}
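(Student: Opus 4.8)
The plan is to prove the estimate on each dyadic frequency interaction and then sum the resulting series. Writing $u=\sum_{H_1}u_{H_1}$ and $v=\sum_{H_2}v_{H_2}$ with $u_{H_i}=P_{H_i}u$, $v_{H_i}=P_{H_i}v$, localizing the output by $\sum_{H_3}P_{H_3}$, and unfolding the definitions of the $F^s(T)$ and $\N^s(T)$ norms, I would reduce both parts to a single block estimate
\begin{equation*}
\|P_{H_3}\partial_x(u_{H_1}v_{H_2})\|_{\N_{H_3}} \lesssim C(H_1,H_2,H_3)\,\|u_{H_1}\|_{F_{H_1}}\|v_{H_2}\|_{F_{H_2}},
\end{equation*}
the gain $C$ being required to sum against $H_3^s$ relative to the input weights $H_1^sH_2^{(1/4)+}$ and $H_1^{(1/4)+}H_2^s$. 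Since $\zeta_3=\zeta_1+\zeta_2$ forces the two largest of $H_1,H_2,H_3$ to be comparable, only three regimes occur: the balanced one $H_1\sim H_2\sim H_3$, and the two unbalanced ones with $H_{min}\ll H_{max}$, according to whether the low frequency is the output or an input. In all cases $\partial_x$ costs at most a factor $N_3\lesssim H_3^{1/\alpha}$, where $N_3=|\xi_3|$.

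The core step is to convert the $\N_{H_3}$-norm of a block into the $L^2$ bilinear bounds of Corollary \ref{bilinear}. Fixing $t_{H_3}$ in \re{def.NH}, I would insert a redundant time cutoff at the finest scale $H_{max}^{-\beta}$, replacing the inputs by $\varphi_1(H_{max}^\beta(\cdot-t))u_{H_1}$ and its analogue for $v_{H_2}$; Lemma \ref{fond} and Corollary \ref{cor-fond} are then exactly what is needed to pass from the scales $H_i^{-\beta}$ built into the $F_{H_i}$ norms to this common finer scale. When the output carries the low frequency ($H_3=H_{min}\ll H_{max}$), the interval of length $H_3^{-\beta}$ in \re{def.NH} is coarser than $H_{max}^{-\beta}$, so I would first partition it into $\sim(H_{max}/H_3)^{\beta}$ subintervals of length $H_{max}^{-\beta}$ and sum, the loss being compensated by the decay of $C$. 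Expanding each localized factor into its modulation pieces $\varphi_{L_i}(\tau-\omega)$, what remains is a modulation-weighted sum of convolutions, each controlled by Corollary \ref{bilinear}: \re{bilinear.2} in the balanced regime $H_{min}\sim H_{max}$, and \re{bilinear.1}--\re{bilinear.01} in the unbalanced ones $H_{min}\ll H_{max}$.

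It then remains to carry out the summations. The output modulation is summed against the $\N_{H_3}$-weight $L_3^{1/2}\max(L_3,H_3^\beta)^{-1}$, using the orthogonality of the pieces $1_{D_{H_3,\cdot,L_3}}$ and, for the large-modulation contributions, the resonance bound $L_{max}\gtrsim|\Omega|$; this produces an extra factor $H_3^{-\beta/2}$. In the balanced case the net frequency gain is therefore $N_3^{\,1-\alpha/2}\,H_3^{(1/4)+}\,H_3^{-\beta/2}\lesssim H_3^{(1/4)+}$, where I used $N_3\lesssim H_3^{1/\alpha}$, $1-\alpha/2\ge 0$, and $\beta=\frac2\alpha-1$. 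This matches the weight $H_2^{(1/4)+}$ on the low-frequency factor and is summable against $H_3^s$ precisely when $s>1/4$, which is the source of the restriction in the statement. The unbalanced regimes are more favourable, since the strong off-diagonal decay $H_{max}^{-1/2}$ in \re{bilinear.1}--\re{bilinear.01} leaves room to absorb both the derivative loss and the subdivision factor.

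The main obstacle is twofold. First, the balanced resonant interactions: near $2\mu^2=\alpha(\alpha+1)|\xi|^\alpha$ the resonance function $\Omega$ degenerates, no modulation gain is available, and the bound \re{bilinear.2} can only be saved by the improved Strichartz estimate of Corollary \ref{l4} built into it. Second, the matching of the three time-localization scales $H_i^{-\beta}$ and, in the high-high-low regime, the subdivision of the long $\N_{H_3}$-interval, which carries most of the technical bookkeeping and is where the precise choice $\beta=\frac2\alpha-1$ becomes essential. Finally, part (2) is obtained along exactly the same lines; only the distribution of weights changes ($H^0$ on $u$, $H^s$ on $v$), and the same block estimate accommodates it because $s>1/4$.
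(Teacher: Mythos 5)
Your proposal follows essentially the same route as the paper's proof: reduce to dyadic block estimates for $\|P_{H_3}\partial_x(u_{H_1}v_{H_2})\|_{\N_{H_3}}$ by refining the time localization to the finest scale $H_{max}^{-\beta}$ (using Lemma \ref{fond}/Corollary \ref{cor-fond}, with the $(H_{max}/H_3)^{\beta}$ subdivision loss when the output carries the low frequency), expand in modulations and invoke Corollary \ref{bilinear}, then sum with the exponent arithmetic $\frac 1\alpha-\frac \beta 2-\frac 12=0$. The paper merely packages this as four lemmas (low$\times$high$\to$high, high$\times$high$\to$high, high$\times$high$\to$low, and the trivial all-low case), but the substance --- including the use of $L_{max}\sim\max(L_{med},|\Omega|)$ to control the large-modulation sums and the balanced-case gain $H^{(1/4)+}$ that forces $s>1/4$ --- matches your plan.
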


We split the proof of Proposition \ref{stbe} into several technical lemmas.

\begin{lemma}[$low \times high \to high$]\label{lhh}
Assume that $H, H_1, H_2\in\D$ satisfy $H_1\ll H\sim H_2$. Then,
\begin{equation}\label{lhh.0}
\|P_H \partial_x(u_{H_1} v_{H_2})\|_{\N_H} \lesssim H_1^{1/4}\|u_{H_1}\|_{F_{H_1}} \|v_{H_2}\|_{F_{H_2}},
\end{equation}
for all $u_{H_1}\in F_{H_1}$ and $v_{H_2}\in F_{H_2}$.
\end{lemma}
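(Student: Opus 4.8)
The plan is to run the standard short-time bilinear scheme: unfold the definitions of $\N_H$, $F_{H_1}$ and $F_{H_2}$, reduce to a frequency-localized trilinear expression, bound it by the $L^2$ bilinear estimates \eqref{bilinear.1}--\eqref{bilinear.01} of Corollary \ref{bilinear}, and then resum in the modulation variables. The whole point is that this is a benign $low\times high$ interaction, so the favorable branch \eqref{bilinear.1}--\eqref{bilinear.01} (valid because $H_{min}=H_1\ll H_{max}=H\sim H_2$) is available, with its $H^{-1/2}$ Jacobian gain.

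First I would fix $t_H\in\R$ and, writing $\gamma_H(t)=\varphi_1(H^\beta(t-t_H))$, reduce $\|P_H\partial_x(u_{H_1}v_{H_2})\|_{\N_H}$ to estimating $\|(\tau-\omega+iH^\beta)^{-1}\F(\gamma_H P_H\partial_x(u_{H_1}v_{H_2}))\|_{X_H}$. Since $H_1\ll H\sim H_2$ and $\beta\ge 0$, the time scale $H_1^{-\beta}$ of the low-frequency factor is, up to a constant, longer than $H^{-\beta}$, so on $\supp\gamma_H$ one has $u_{H_1}=\gamma_{H_1}u_{H_1}$ with $\gamma_{H_1}(t)=\varphi_1(H_1^\beta(t-t_H))\equiv 1$ there, while $v_{H_2}$ may be cut at its own scale $H_2^{-\beta}\sim H^{-\beta}$. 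Setting $f_1=\F(\gamma_{H_1}u_{H_1})$ and $f_2=\F(\gamma_H v_{H_2})$ (with a slightly fattened cutoff on the second factor), the definitions of $F_{H_1},F_{H_2}$ directly give $\|f_1\|_{X_{H_1}}\lesssim\|u_{H_1}\|_{F_{H_1}}$ and $\|f_2\|_{X_{H_2}}\lesssim\|v_{H_2}\|_{F_{H_2}}$; because the low factor is cut at its own scale, no recourse to Corollary \ref{cor-fond} is needed here.

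Next I would decompose $f_i=\sum_{L_i}f_{i,L_i}$ with $f_{i,L_i}=\varphi_{L_i}(\tau-\omega)f_i$, split the output modulation $L_3$ at the threshold $H^\beta$, and note that the support $(\xi,\mu)\in\Delta_H$ forces $|\xi|\lesssim H^{1/\alpha}$, so $\partial_x$ costs a factor $H^{1/\alpha}$. I would then bound each piece $\|\varphi_{L_3}\F(\gamma_H\partial_x(\cdots))\|_{L^2}$ by $H^{1/\alpha}\|1_{D_{H,\infty,L_3}}(f_{1,L_1}\ast f_{2,L_2})\|_{L^2}$ and apply \eqref{bilinear.1}--\eqref{bilinear.01}, which supplies the gain $H^{-1/2}H_1^{1/4}$ and a product of two of the $L_i^{1/2}$. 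The decisive algebraic identity is $H^{1/\alpha-1/2}=H^{\beta/2}$: the derivative loss factors as $H^{1/\alpha}=H^{1/2}\cdot H^{\beta/2}$, the first half being absorbed exactly by the bilinear $H^{-1/2}$, and the remaining $H^{\beta/2}$ by the short-time structure, namely the weight $\max(L_3,H^\beta)^{-1}$ of the $\N_H$ norm.

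The main obstacle will be closing the summation in the low output-modulation region $L_3\le H^\beta$, where the $\N_H$ weight saturates at $H^{-\beta}$ and no direct gain in $L_3$ is available (the region $L_3>H^\beta$ is easy, since there $L_3^{-1/2}\le H^{-\beta/2}$ already yields geometric summability). Here the resonance relation $L_3=|\theta_1+\theta_2-\Omega(\zeta_1,\zeta_2)|$ is essential: it forces $\max(L_1,L_2,L_3)\gtrsim|\Omega|$ and thereby selects the correct branch of \eqref{bilinear.1}--\eqref{bilinear.01} according to which of $L_1,L_2,L_3$ is largest. Carrying out the threshold summation $\sum_{L_3\le H^\beta}L_3\sim H^\beta$, the weight $H^{-\beta}$ combines with the bilinear $L$-factors to leave precisely a clean product $L_1^{1/2}L_2^{1/2}$, after which $\sum_{L_1}L_1^{1/2}\|f_{1,L_1}\|_{L^2}\lesssim\|u_{H_1}\|_{F_{H_1}}$ and the analogous bound for $f_2$ produce the claimed factor $H_1^{1/4}\|u_{H_1}\|_{F_{H_1}}\|v_{H_2}\|_{F_{H_2}}$. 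A short case analysis on the ordering of the $L_i$ is all that remains to make this resummation uniform across both modulation regimes.
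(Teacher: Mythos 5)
Your proposal is correct, and its skeleton is the paper's own proof: unfold the $\N_H$ norm, reduce to a modulation-localized trilinear bound, feed it into \eqref{bilinear.1}--\eqref{bilinear.01} (available since $H_{min}=H_1\ll H_{max}$), and close the sums with the identity $\frac 1\alpha-\frac\beta 2-\frac 12=0$. The one genuine structural difference is the time scale at which the low-frequency factor is cut. The paper cuts \emph{both} factors at the output scale $H^{-\beta}$ and then invokes Corollary \ref{cor-fond} (whose hypothesis $H\gg H_1$ is exactly this configuration) so that in the reduced estimate \eqref{lhh.3} all modulations are $\ge\lfloor H^\beta\rfloor$; this normalization makes the resummation uniform. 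You cut $u_{H_1}$ at its own scale $H_1^{-\beta}$, dispensing with Corollary \ref{cor-fond} but letting $L_1,L_2$ run down to $O(1)$. That wider range does still close: the only delicate configuration is $L_1=L_{max}$ together with $L_3=L_{min}$, where the relevant sum $\sum_{L_3\le L_2}L_3\max(L_3,H^\beta)^{-1}$ is $\lesssim H^{-\beta}L_2$ if $L_2\le H^\beta$ and $\lesssim 1+\log(L_2/H^\beta)$ if $L_2\ge H^\beta$, in both cases $\lesssim H^{-\beta/2}L_2^{1/2}$, which is exactly what the prefactor $H^{1/\alpha}H^{-1/2}H_1^{1/4}L_1^{1/2}=H^{\beta/2}H_1^{1/4}L_1^{1/2}$ requires; so the case analysis you defer genuinely works out. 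Two corrections to how you sell the simplification, neither fatal. First, the choice between \eqref{bilinear.1} and \eqref{bilinear.01} is governed by whether the minimum \emph{frequency} carries the maximum \emph{modulation} (i.e.\ whether $L_1=L_{max}$), not by the resonance relation; $|\Omega|$ plays no role in this lemma (it is needed in the $high\times high\to high$ regime, not here). Second, the factorization $\gamma_H(u_{H_1}v_{H_2})=(\gamma_{H_1}u_{H_1})(\gamma_Hv_{H_2})$ needs $\gamma_{H_1}\equiv 1$ on $\supp\gamma_H$, which fails when $\beta=0$ (i.e.\ $\alpha=2$, where both windows have unit length) and in general forces a fattened low-frequency cutoff; once fattened, the bound $\|f_1\|_{X_{H_1}}\lesssim\|u_{H_1}\|_{F_{H_1}}$ is no longer verbatim the definition of $F_{H_1}$ but requires a finite partition plus Lemma \ref{fond} with comparable frequencies --- the same standard fact the paper absorbs silently for its fattened cutoff $\widetilde{\varphi}_1$ on the high factor. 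So your route trades Corollary \ref{cor-fond} for its parent Lemma \ref{fond} rather than eliminating that machinery; what it buys is that each factor is measured at its natural time scale, at the cost of a heavier case analysis in the modulation sums that the paper's normalization avoids.
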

\begin{proof}
First observe from the definition of $\N_H$ in (\ref{def.FN}) that
\begin{equation}\label{lhh.1}
\|P_H \partial_x (u_{H_1}v_{H_2})\|_{\N_H} \lesssim \sup_{t_H\in\R}\|(\tau-\omega(\zeta)+iH^{\beta})^{-1} H^{1/\alpha} 1_{\Delta_H}\cdot f_{H_1} \ast g_{H_2}\|_{X_H},
\end{equation}
where
\begin{align*}
f_{H_1} &= |\F(\varphi_1(H^{\beta}(\cdot-t_H))u_{H_1})| \;, \\
g_{H_2} &= |\F(\widetilde{\varphi}_1(H^{\beta}(\cdot -t_H))v_{H_2})|.
\end{align*}
Now we set
\begin{align*}
f_{H_1,\lfloor H^{\beta}\rfloor}(\tau,\zeta) &= \varphi_{\le \lfloor H^{\beta}\rfloor}(\tau-\omega(\zeta))f_{H_1}(\tau,\zeta), \\
f_{H_1,L}(\tau,\zeta) &= \varphi_L(\tau-\omega(\zeta)) f_{H_1}(\tau,\zeta),
\end{align*}
for $L>\lfloor H^{\beta}\rfloor$ and we define similarly $g_{H_2,L}$ for $L\ge \lfloor H^\beta\rfloor$. Thus we deduce from (\ref{lhh.1}) and the definition of $X_H$ that
\begin{equation}\label{lhh.2}
\|P_H \partial_x (u_{H_1}v_{H_2})\|_{\N_H} \lesssim \sup_{t_H\in\R} H^{1/\alpha} \sum_{L,L_1,L_2\ge \lfloor H^{\beta}\rfloor} L^{-1/2} \|1_{D_{H,\infty,L}}\cdot f_{H_1,L_1}\ast g_{H_2,L_2}\|_{L^2},
\end{equation}
where $D_{H,\infty,L}$ is defined in (\ref{DNinfL}). Here we use that since $|(\tau-\omega(\zeta)+iH^{\beta})^{-1}|\le H^{-\beta}$, the sum for $L<\lfloor H^{\beta}\rfloor$ appearing implicitly on the RHS of (\ref{lhh.1}) is controlled by the term corresponding to $L=\lfloor H^{\beta}\rfloor$ on the RHS of (\ref{lhh.2}). Therefore, according to Corollary \ref{cor-fond} and estimate (\ref{lhh.2}) it suffices to prove that
\begin{multline}\label{lhh.3}
H^{1/\alpha}\sum_{L\ge \lfloor H^{\beta}\rfloor} L^{-1/2} \|1_{D_{H,\infty,L}} \cdot f_{H_1,L_1}\ast g_{H_2,L_2}\|_{L^2} \\
\lesssim H_1^{1/4}L_1^{1/2} \|f_{H_1,L_1}\|_{L^2} L_2^{1/2}\|g_{H_2,L_2}\|_{L^2}
\end{multline}
with $L_1,L_2\ge \lfloor H^{\beta}\rfloor$. Using that $\frac 1\alpha-\frac \beta 2-\frac 12=0$, this is a consequence of estimates (\ref{bilinear.1})-\eqref{bilinear.01}.
\end{proof}

\begin{lemma}[$high \times high \to high$]
Assume that $H, H_1, H_2\in\D$ satisfy $H\sim H_1\sim H_2\gg 1$. Then,
\begin{equation}\label{hhh.0}
\|P_H \partial_x(u_{H_1} v_{H_2})\|_{\N_H} \lesssim H^{(1/4)+} \|u_{H_1}\|_{F_{H_1}} \|v_{H_2}\|_{F_{H_2}},
\end{equation}
for all $u_{H_1}\in F_{H_1}$ and $v_{H_2}\in F_{H_2}$.
\end{lemma}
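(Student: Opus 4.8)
The plan is to follow the reduction carried out for Lemma~\ref{lhh}, adding one new ingredient dictated by the regime $H\sim H_1\sim H_2$: here estimates \re{bilinear.1}--\re{bilinear.01} are no longer at our disposal (they require $H_{min}\ll H_{max}$), so we must instead rely on the refined $L^2$ bilinear bound \re{bilinear.2}, whose gain $N_{max}^{-\alpha/2}$ comes from the Strichartz estimate of Corollary~\ref{l4}. To be able to invoke \re{bilinear.2} I would first decompose each factor along the horizontal frequency, writing $u_{H_1}=\sum_{N_1}P_{N_1}^{x}u_{H_1}$, $v_{H_2}=\sum_{N_2}P_{N_2}^{x}v_{H_2}$ and localizing the output in $\xi\in I_{N_3}$. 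Since all profiles are supported in $\Delta_H$ we have $N_i\lesssim H^{1/\alpha}$, so only $O(\log H)=O(H^{0+})$ dyadic values of each $N_i$ occur and the $\ell^1$ sums over them will be harmless.

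After this extra splitting I would unfold the definition \re{def.NH} exactly as in \re{lhh.1}--\re{lhh.2}: using Corollary~\ref{cor-fond} and decomposing each time-localized factor into its modulation pieces $f_{H_1,N_1,L_1}$, $g_{H_2,N_2,L_2}$ with $L_1,L_2\ge\lfloor H^\beta\rfloor$, the bound \re{hhh.0} is reduced to showing, for fixed $N_i$ and $L_1,L_2$, that
\begin{equation*}
N_3\sum_{L\ge\lfloor H^\beta\rfloor}L^{-1/2}\big\|1_{D_{H,N_3,L}}\,f_{H_1,N_1,L_1}\ast g_{H_2,N_2,L_2}\big\|_{L^2}\lesssim H^{(1/4)+}L_1^{1/2}L_2^{1/2}\|f_{H_1,N_1,L_1}\|_{L^2}\|g_{H_2,N_2,L_2}\|_{L^2},
\end{equation*}
the factor $N_3$ accounting for $\partial_x$. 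Plugging \re{bilinear.2} into the left-hand side and using $N_3\le N_{max}$, everything reduces to the elementary balance $N_{max}^{1-\alpha/2}\le H^{\beta/2}$, valid because $N_{max}\lesssim H^{1/\alpha}$ and, by the very definition of $\beta$, $\tfrac1\alpha-\tfrac12=\tfrac\beta2$.

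I would then run the trichotomy according to the rank of the output modulation $L$ inside $\{L_1,L_2,L\}$. When $L$ is the smallest, the factor $L_{med}^{1/2}L_{max}^{1/2}=(L_1L_2)^{1/2}$ coming from \re{bilinear.2} is constant in $L$ and $\sum_{L\ge\lfloor H^\beta\rfloor}L^{-1/2}\sim H^{-\beta/2}$ exactly cancels the $H^{\beta/2}$ above. When $L$ is the median or the maximum, the sum over $L$ only costs a harmless $\log H$ (the convolution vanishes as soon as $L\gg\max(L_1,L_2,|\Omega|)$), but the surviving factor is now $L_{med}^{1/2}$; recovering the target $(L_1L_2)^{1/2}$ then amounts to dividing by $\min(L_1,L_2)^{1/2}$, and this is where the lower bound $\min(L_1,L_2)\ge\lfloor H^\beta\rfloor\sim H^\beta$ supplies the missing $H^{-\beta/2}$. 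In every case one lands on $H^{(1/4)+}(L_1L_2)^{1/2}$; summing over $L_1,L_2$ rebuilds the $F_{H_1}$ and $F_{H_2}$ norms, while the $O(\log H)$ summations over $N_1,N_2,N_3$ are absorbed into the $H^{0+}$.

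The step I expect to be the main obstacle is the strongly resonant region $\mathcal{D}_1=\{\mu_i^2\ll|\xi_i|^\alpha,\ i=1,2,3\}$ from the proof of Proposition~\ref{prop.bilinear}. There all three horizontal frequencies are $\sim H^{1/\alpha}$, so the derivative loss $N_3$ is maximal, while $|\Omega(\zeta_1,\zeta_2)|\sim N_{max}^{\alpha+1}$ forces the top modulation to be that large; in particular, if the inputs carry low modulation the output modulation is pinned at $\sim N_{max}^{\alpha+1}$ and no additional Strichartz gain survives on top of it. It is precisely in this configuration that the identity $\tfrac1\alpha-\tfrac12=\tfrac\beta2$ and the lower bound $L_i\ge\lfloor H^\beta\rfloor$ must be used sharply, the estimate leaving no room to spare (and degenerating to the lossless case $\beta=0$ when $\alpha=2$). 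Checking that \re{bilinear.2} remains both valid and sufficient throughout $\mathcal{D}_1$ is thus the heart of the matter.
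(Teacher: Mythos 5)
Your proposal is correct and follows essentially the same route as the paper: reduce, as in Lemma \ref{lhh}, to a frequency- and modulation-localized estimate with the extra dyadic decomposition in $\xi$ (costing $H^{0+}$ since $N,N_1,N_2\lesssim H^{1/\alpha}$), apply \eqref{bilinear.2}, and close with the algebra $N_{max}^{1-\alpha/2}\lesssim H^{\beta/2}$ together with the case analysis on the rank of $L$, where either $\sum_L L^{-1/2}\lesssim H^{-\beta/2}$ or the deficit $\min(L_1,L_2)^{1/2}\ge H^{\beta/2}$ plus the $O(\log H)$ count from $L_{max}\sim|\Omega|\lesssim H^{1+1/\alpha}$ does the job. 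The only cosmetic differences are your labeling of the case split (the paper distinguishes ``$L=L_{min}$ or $L_{med}\sim L_{max}$'' versus ``otherwise'') and your invocation of Corollary \ref{cor-fond}, which is stated for $H\gg H_1$ and is not really needed here since all three frequencies are comparable, so the modulation decomposition with $L_i\ge\lfloor H^\beta\rfloor$ comes directly from the definition of $F_{H_i}$.
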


\begin{proof}
  Arguing as in the proof of Lemma \ref{lhh}, it is enough to prove that
  \begin{multline}\label{hhh.1}
N\sum_{L\ge \lfloor H^{\beta}\rfloor} L^{-1/2} \|1_{D_{H,N,L}} \cdot f_{H_1,N_1,L_1}\ast g_{H_2,N_2,L_2}\|_{L^2} \\
\lesssim H^{(1/4)+}L_1^{1/2} \|f_{H_1,N_1,L_1}\|_{L^2} L_2^{1/2}\|g_{H_2,N_2,L_2}\|_{L^2}
\end{multline}
where $f_{H_1,N_1,L_1}$ and $g_{H_2,N_2,L_2}$ are localized in $D_{H_i,N_i,L_i}$, with $L,L_1,L_2\ge \lfloor H^\beta\rfloor$ and $N,N_1,N_2\lesssim H^{1/\alpha}$. Observe that the sums over $N,N_1,N_2$ are easily controlled by $\log(H^{1/\alpha})\lesssim H^{0+}$. Using that $1-\frac 1\alpha\ge 0$ and $\frac 1\alpha-\frac \beta 2-\frac 12=0$, this is a consequence of estimate \eqref{bilinear.2} in the case $L=L_{min}$ or $L_{med}\sim L_{max}$. Otherwise, we have $L_{max}\sim |\Omega|\lesssim H^{1+\frac 1\alpha}$ so that the sum over $L$ is bounded by $H^{0+}$ and \eqref{hhh.1} still holds.
\end{proof}

\begin{lemma}[$high \times high \to low$]
Assume that $H, H_1, H_2\in\D$ satisfy $H\ll H_1\sim H_2$. Then,
\begin{equation}\label{hhl.0}
\|P_H \partial_x(u_{H_1} v_{H_2})\|_{\N_H} \lesssim H^{\frac 54-\frac 1\alpha} H_1^{(\frac 1\alpha-1)+} \|u_{H_1}\|_{F_{H_1}} \|v_{H_2}\|_{F_{H_2}},
\end{equation}
for all $u_{H_1}\in F_{H_1}$ and $v_{H_2}\in F_{H_2}$.
\end{lemma}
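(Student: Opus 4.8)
The plan is to mirror the proofs of Lemma \ref{lhh} and of the $high\times high\to high$ case, the one genuinely new feature being that the output is now localized at the \emph{low} frequency $H$: the time window $H^{-\beta}$ imposed by $\N_H$ is therefore \emph{longer} than the window $H_1^{-\beta}$ carried by the high-frequency inputs $u_{H_1}\in F_{H_1}$ and $v_{H_2}\in F_{H_2}$ (recall $\beta=\frac2\alpha-1\ge0$ and $H\ll H_1$). First I would unfold the definition (\ref{def.NH}) of $\N_H$, commute $\partial_x$ with the time cutoff, and use $|\xi|\lesssim H^{1/\alpha}$ on $\Delta_H$ to bound the derivative, writing for fixed $t_H$
$$
\varphi_1(H^\beta(\cdot-t_H))\,P_H\partial_x(u_{H_1}v_{H_2}) = P_H\partial_x\big(\varphi_1(H^\beta(\cdot-t_H))\,u_{H_1}v_{H_2}\big),
$$
which will produce a factor $H^{1/\alpha}$ exactly as in (\ref{lhh.1}). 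Note that Corollary \ref{cor-fond} is \emph{not} available here since it requires $H\gg H_1$, so the window mismatch must be resolved differently.

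To that end I would insert a smooth partition of unity $\sum_m\gamma(H_1^\beta\cdot-m)\equiv1$ at the fine scale $H_1^{-\beta}$, so that
$$
\varphi_1(H^\beta(\cdot-t_H))\,u_{H_1}v_{H_2} = \sum_m \gamma(H_1^\beta\cdot-m)\,\varphi_1(H^\beta(\cdot-t_H))\,u_{H_1}v_{H_2},
$$
where at most $R\sim(H_1/H)^\beta$ terms are nonzero. For each such $m$ I would localize $u_{H_1}$ by the window $\varphi_1(H_1^\beta(\cdot-t_m))$ centered at $t_m=mH_1^{-\beta}$, placing a fattened window on $v_{H_2}$ so as not to alter the product, so that each summand is controlled via the definitions of $F_{H_1},F_{H_2}$ by $\|u_{H_1}\|_{F_{H_1}}\|v_{H_2}\|_{F_{H_2}}$. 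Summing the $R$ pieces by the triangle inequality is precisely what generates the factor $(H_1/H)^\beta=H^{-\beta}H_1^\beta$ relating the per-window bound to the target $H^{\frac54-\frac1\alpha}H_1^{(\frac1\alpha-1)+}$.

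It then remains, as in (\ref{lhh.3})/(\ref{hhh.1}), to prove for fixed $L_1,L_2\gtrsim\lfloor H_1^\beta\rfloor$ a per-window discrete estimate
$$
H^{1/\alpha}\sum_{L\gtrsim\lfloor H^\beta\rfloor}L^{-1/2}\|1_{D_{H,\infty,L}}\,f_{H_1,L_1}\ast g_{H_2,L_2}\|_{L^2} \lesssim H^{\frac14+\frac1\alpha}H_1^{-\frac1\alpha+}\,L_1^{1/2}\|f_{H_1,L_1}\|_{L^2}L_2^{1/2}\|g_{H_2,L_2}\|_{L^2},
$$
the sums over $L_1,L_2$ being recovered from the $X_{H_i}$ structure. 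Since $H_{min}=H\ll H_1\sim H_2=H_{max}$, I would invoke case (2) of Corollary \ref{bilinear}, namely (\ref{bilinear.1}) and (\ref{bilinear.01}); no $x$-frequency decomposition is needed because these bounds are $N$-free. The decisive point is the modulation bookkeeping: the identity $\tau-\omega(\zeta)=(\tau_1-\omega(\zeta_1))+(\tau_2-\omega(\zeta_2))-\Omega(\zeta_1,\zeta_2)$ shows the convolution lives where $L\lesssim\max(L_1,L_2,|\Omega|)$, with $|\Omega|\lesssim H_1^{1+\frac1\alpha}$, so the sum over $L\in[\lfloor H^\beta\rfloor,\max(L_1,L_2,|\Omega|)]$ costs only a logarithmic factor $H_1^{0+}$. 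In both sub-cases, combining $L^{-1/2}$ with the $L_{max}^{1/2}$ (or $L_{med}^{1/2}$) of the bilinear bound collapses to $H_1^{-1/2}H^{1/4}\min(L_1,L_2)^{1/2}$, and using $\min(L_1,L_2)^{1/2}\le(L_1L_2)^{1/2}H_1^{-\beta/2}$ together with $\beta/2=\frac1\alpha-\frac12$ yields exactly the announced per-window constant, which multiplied by $R$ gives the claimed bound.

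The main obstacle I anticipate is precisely this reconciliation of the two time scales: one must verify that tiling the long output window $H^{-\beta}$ by $\sim(H_1/H)^\beta$ windows of the input scale $H_1^{-\beta}$ loses no more than the factor $(H_1/H)^\beta$, and that the auxiliary cutoffs $\gamma(H_1^\beta\cdot-m)$ do not spoil the modulation localization. The latter holds because their time-frequency width $H_1^\beta$ does not exceed the minimal modulation $\lfloor H_1^\beta\rfloor$ already built into $F_{H_1}$ and $F_{H_2}$, so that the windowed factors are genuinely controlled by the $F_{H_i}$ norms. Everything else is a routine transcription of the arguments already used for Lemma \ref{lhh} and the $high\times high\to high$ case.
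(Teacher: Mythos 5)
Your proposal is correct and follows essentially the same route as the paper's proof: the paper likewise tiles the long output window with a partition of unity at scale $H_1^{-\beta}$ (there written as $\sum_{m\in\Z}\gamma^2(H_1^\beta(\cdot-t_H)-m)=1$, one factor of $\gamma$ going to each of $u_{H_1}$, $v_{H_2}$), accepts exactly the loss $H_1^\beta H^{-\beta}$ from summing the $\sim(H_1/H)^\beta$ pieces, and then concludes per window from \eqref{bilinear.1}--\eqref{bilinear.01} with the same modulation bookkeeping ($L_{max}\sim\max(L_{med},|\Omega|)$) and exponent arithmetic $\beta/2=\frac1\alpha-\frac12$. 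The only cosmetic difference is that the paper invokes Lemma \ref{fond} to justify that the auxiliary cutoffs $\gamma(H_1^\beta(\cdot-t_H)-m)$ are compatible with the $F_{H_i}$ norms, which is precisely the rigorous form of your closing remark that their time-frequency width $H_1^\beta$ does not exceed the minimal modulation $\lfloor H_1^\beta\rfloor$.
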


\begin{proof}
  Let $\gamma:\R\to [0,1]$ be a smooth function supported in $[-1,1]$ with the property that
  $$
  \sum_{m\in\Z}\gamma^2(x-m)=1,\ \forall x\in\R.
  $$
  We observe from the definition of $\N_H$ in  \eqref{def.NH} that
  \begin{multline}\label{hhl.1}
    \|P_H \partial_x(u_{H_1} v_{H_2})\|_{\N_H} \\
    \lesssim H^{1/\alpha}\sup_{t_H\in\R} \left\|(\tau-\omega(\zeta)+iH^\beta)^{-1} 1_{\Delta_H} \sum_{|m|\lesssim (H_1/H)^\beta}f_{H_1}^m\ast g_{H_2}^m \right\|_{X_H},
  \end{multline}
  where
  $$
  f_{H_1}^m = |\F(\varphi_1(H^\beta(\cdot-t_H))\gamma(H_1^\beta(\cdot-t_H)-m)u_{H_1})|,
  $$
  and
  $$
  g_{H_2}^m = |\F(\widetilde{\varphi}_1(H^\beta(\cdot-t_H))\gamma(H_1^\beta(\cdot-t_H)-m)v_{H_2})|.
  $$
  Now, we set
  \begin{align*}
  f_{H_1,\lfloor H_1^{\beta}\rfloor}^m(\tau,\zeta) &= \varphi_{\le \lfloor H_1^{\beta}\rfloor}(\tau-\omega(\zeta))f_{H_1}^m(\tau,\zeta), \\
  f_{H_1,L}^m(\tau,\zeta) &= \varphi_L(\tau-\omega(\zeta)) f_{H_1}^m(\tau,\zeta),
  \end{align*}
  for $L>\lfloor H_1^{\beta}\rfloor$ and we define similarly $g_{H_2,L}^m$ for $L\ge \lfloor H_1^\beta\rfloor$. Thus we deduce from \eqref{lhh.1} and the definition of $X_H$ that
  \begin{multline}\label{hhl.2}
  \|P_H \partial_x(u_{H_1} v_{H_2})\|_{\N_H} \\
    \lesssim H^{1/\alpha} \sup_{t_H\in\R, m\in\Z} H_1^\beta H^{-\beta} \sum_{L\in\D} \sum_{L_1,L_2\ge \lfloor H_1^\beta\rfloor} L^{-1/2} \|1_{D_{H,\infty,L}}\cdot f_{H_1,L_1}^m\ast g_{H_2,L_2}^m\|_{L^2}.
  \end{multline}
  Therefore, according to Lemma \ref{fond} and estimate \eqref{hhl.2} it suffices to prove that
  \begin{multline}\label{hhl.3}
    H^{\frac 1\alpha-\beta}H_1^\beta \sum_{L\in\D} L^{-1/2} \|1_{D_{H,\infty,L}}\cdot f_{H_1,L_1}^m\ast g_{H_2,L_2}^m\|_{L^2} \\
    \lesssim H^{\frac 54-\frac 1\alpha} H_1^{(\frac 1\alpha-1)+} L_1^{1/2} \|f_{H_1,L_1}^m\|_{L^2} L_2^{1/2}\|g_{H_2,L_2}^m\|_{L^2},
  \end{multline}
  with $L_1,L_2\ge \lfloor H_1^\beta\rfloor$ in order to prove \eqref{hhl.0}. As in the proof of Lemma \ref{lhh}, estimate \eqref{hhl.3} follows from \eqref{bilinear.1}-\eqref{bilinear.01} and the fact that $L_{max}\sim \max(L_{med}, |\Omega|)$.
\end{proof}

\begin{lemma}[$low \times low \to low$]
Assume that $H, H_1, H_2\in\D$ satisfy $H, H_1, H_2\lesssim 1$. Then,
\begin{equation}\label{lll.0}
\|P_H \partial_x(u_{H_1} v_{H_2})\|_{\N_H} \lesssim \|u_{H_1}\|_{F_{H_1}} \|v_{H_2}\|_{F_{H_2}},
\end{equation}
for all $u_{H_1}\in F_{H_1}$ and $v_{H_2}\in F_{H_2}$.
\end{lemma}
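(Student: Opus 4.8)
The plan is to follow the scheme of Lemma \ref{lhh}, which is substantially simpler in the present regime because the constraint $H,H_1,H_2\lesssim 1$ forces all three frequencies to be comparable to $1$; in particular there is no frequency hierarchy, and the time-localization scales $H^{-\beta}$, $H_1^{-\beta}$, $H_2^{-\beta}$ are all $\sim 1$, so that no relocalization via Corollary \ref{cor-fond} is needed and the $F_{H_i}$ norms may be used directly. First I would start from the definition \eqref{def.NH} of the $\N_H$ norm and bound
$$\|P_H\partial_x(u_{H_1}v_{H_2})\|_{\N_H}\lesssim \sup_{t_H\in\R}\|(\tau-\omega(\zeta)+iH^\beta)^{-1}H^{1/\alpha}1_{\Delta_H}\cdot f_{H_1}\ast g_{H_2}\|_{X_H},$$
where $f_{H_1},g_{H_2}$ are defined as in the proof of Lemma \ref{lhh} and the factor $H^{1/\alpha}$ accounts for the derivative $\partial_x$, since $|\xi|\lesssim H^{1/\alpha}$ on $\Delta_H$. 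Decomposing $f_{H_1}$ and $g_{H_2}$ into their modulation pieces $f_{H_1,L_1}$, $g_{H_2,L_2}$ with $L_1,L_2\ge\lfloor H^\beta\rfloor$ and unfolding the $X_H$ norm, the matter reduces to establishing
$$H^{1/\alpha}\sum_{L\ge\lfloor H^\beta\rfloor}L^{-1/2}\|1_{D_{H,\infty,L}}\cdot f_{H_1,L_1}\ast g_{H_2,L_2}\|_{L^2}\lesssim L_1^{1/2}\|f_{H_1,L_1}\|_{L^2}L_2^{1/2}\|g_{H_2,L_2}\|_{L^2}.$$

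For this I would invoke the basic bilinear bound \eqref{bilinear.0}, applied with the three functions localized in $D_{H,\infty,L}$, $D_{H_1,\infty,L_1}$, $D_{H_2,\infty,L_2}$, which produces the factor $H_{min}^{\frac1{2\alpha}+\frac14}\min(L,L_1,L_2)^{1/2}$. Since $H,H_1,H_2\lesssim 1$, the frequency prefactor $H^{1/\alpha}H_{min}^{\frac1{2\alpha}+\frac14}$ is $\lesssim 1$ and can be absorbed into the implicit constant, so the whole estimate collapses to the purely numerical inequality $\sum_{L\ge\lfloor H^\beta\rfloor}L^{-1/2}\min(L,L_1,L_2)^{1/2}\lesssim L_1^{1/2}L_2^{1/2}$.

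The only point that requires a little care --- and hence the closest thing to an obstacle in this otherwise routine case --- is this modulation sum. Splitting at $L_*=\min(L_1,L_2)$, the range $L\le L_*$ contributes $\#\{L:\lfloor H^\beta\rfloor\le L\le L_*\}\sim 1+\log(L_*/\lfloor H^\beta\rfloor)$ terms each of size $1$, while the range $L>L_*$ is a convergent geometric series of total size $\lesssim 1$. Hence the sum is $\lesssim 1+\log L_*\lesssim L_*^{0+}$, and because $L_1,L_2\ge\lfloor H^\beta\rfloor\ge 1$ we have $L_*^{0+}\lesssim L_*^{1/2}\le L_1^{1/2}L_2^{1/2}$, which closes the argument. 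I expect no genuine difficulty beyond verifying that this low-modulation logarithm is absorbed by the half-powers $L_1^{1/2}L_2^{1/2}$ on the right, a gain that is available precisely because every modulation is bounded below by $\lfloor H^\beta\rfloor\ge 1$.
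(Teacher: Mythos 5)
Your proposal is correct and follows essentially the same route as the paper: reduce, as in the $low\times high\to high$ case, to the dyadic modulation estimate \eqref{lll.1} and conclude via the $L^2$ bilinear bound \eqref{bilinear.0}, with the frequency prefactors harmless because $H,H_1,H_2\lesssim 1$. The only difference is cosmetic: the paper calls the modulation sum a ``direct consequence'' of \eqref{bilinear.0}, while you spell out the splitting at $L_*=\min(L_1,L_2)$ and the absorption of the logarithm into $L_1^{1/2}L_2^{1/2}$, which is exactly the intended verification.
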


\begin{proof}
  Arguing as in the proof of Lemma \ref{lhh}, it is enough to prove that
  \begin{equation}\label{lll.1}
\sum_{L\in\D} L^{-1/2} \|1_{D_{H,\infty,L}} \cdot f_{H_1,L_1}\ast g_{H_2,L_2}\|_{L^2}
\lesssim L_1^{1/2} \|f_{H_1,L_1}\|_{L^2} L_2^{1/2}\|g_{H_2,L_2}\|_{L^2}
\end{equation}
where $f_{H_1,L_1}$ and $g_{H_2,L_2}$ are localized in $D_{H_i,\infty,L_i}$, with $L_1,L_2\in\D$, which is a direct consequence of estimate \eqref{bilinear.0}.
\end{proof}

\begin{proof}[Proof of Proposition \ref{stbe}]
We only prove part (1) since the proof of estimate \eqref{stbe.01} follows the same lines.
We choose two extensions $\widetilde{u}$ and $\widetilde{v}$ of $u$ and $v$ satisfying
\begin{equation}\label{stbe.2}
\|\widetilde{u}\|_{F^s}\le 2\|u\|_{F^s(T)} \textrm{ and } \|\widetilde{v}\|_{F^s}\le 2\|v\|_{F^s(T)}.
\end{equation}
We have from the definition of $\N^s(T)$ and Minkowski inequality that
$$
\|\partial_x(uv)\|_{\N^s(T)} \lesssim \left(\sum_{H} H^{2s}\left(\sum_{H_1,H_2} \|P_H\partial_x(\widetilde{u}_{H_1} \widetilde{v}_{H_2})\|_{\N_H}\right)^2\right)^{1/2}.
$$
Let us denote
\begin{align*}
A_1 &= \{ (H_1, H_2)\in\D^2 : H\ll H_1\sim H_2 \}, \\
A_2 &= \{ (H_1, H_2)\in\D^2 : H_1\ll H\sim H_2 \}, \\
A_3 &= \{ (H_1, H_2)\in\D^2 : H_2\ll H\sim H_1 \}, \\
A_4 &= \{ (H_1, H_2)\in\D^2 : H\sim H_1\sim H_2\gg 1 \}, \\
A_5 &= \{ (H_1, H_2)\in\D^2 : H, H_1, H_2\lesssim 1 \}.
\end{align*}
Due to the frequency localization, we have
\begin{align}
\|\partial_x(uv)\|_{\N^s(T)} &\lesssim \sum_{j=1}^5\Big(\sum_{H\in\D} H^{2s}\Big(\sum_{(H_1,H_2)\in A_j}  \|P_H\partial_x(\widetilde{u}_{H_1} \widetilde{v}_{H_2})\|_{\N_H}\Big)^2\Big)^{1/2} \nonumber \\
&:=  \sum_{j=1}^5 S_j. \label{stbe.3}
\end{align}
To handle the sum $S_1$, we use estimate (\ref{hhl.0}) to obtain that
\begin{equation}\label{stbe.4}
S_1\lesssim \Big( \sum_{H\in\D} H^{2s} \Big(\sum_{H_1\gg H} H_1^{(1/4)+} \|\widetilde{u}_{H_1}\|_{F_{H_1}} \|\widetilde{v}_{H_1}\|_{F_{H_1}}\Big)^2\Big)^{1/2}
\lesssim \|\widetilde{u}\|_{F^{(1/4)+}} \|\widetilde{v}\|_{F^s}.
\end{equation}
Estimate \eqref{lhh.0} leads to
\begin{equation}\label{stbe.5}
S_2\lesssim \Big( \sum_{H\in\D} H^{2s} \Big(\sum_{H_1\ll H} H_1^{1/4} \|\widetilde{u}_{H_1}\|_{F_{H_1}} \|\widetilde{v}_{H}\|_{F_{H}}\Big)^2\Big)^{1/2}
\lesssim \|\widetilde{u}\|_{F^{(1/4)+}} \|\widetilde{v}\|_{F^s}.
\end{equation}
Similarly we deduce by symmetry that
\begin{equation}\label{stbe.6}
S_3 \lesssim \|\widetilde{u}\|_{F^s} \|\widetilde{v}\|_{F^{(1/4)+}}
\end{equation}
Next it follows from estimate (\ref{hhh.0}) and Cauchy-Schwarz inequality that
\begin{equation}\label{stbe.7}
S_4 \lesssim \left(\sum_{H\in\D} H^{2s} H^{(1/2)+} \|\widetilde{u}_{H}\|_{F_{H}}^2 \|\widetilde{v}_{H}\|_{F_{H}}^2\right)^{1/2}
\lesssim \|\widetilde{u}\|_{F^{s}} \|\widetilde{v}\|_{F^{(1/4)+}}.
\end{equation}
Finally it is clear from estimate \eqref{lll.0} that
\begin{equation}\label{stbe.8}
  S_5\lesssim \|\widetilde{u}\|_{F^0} \|\widetilde{v}\|_{F^0}.
\end{equation}
Therefore we conclude the proof of \eqref{stbe.1} gathering \eqref{stbe.3}-\eqref{stbe.8}.
\end{proof}

\section{Energy estimates}\label{sec-energy}
The aim of this section is to derive energy estimates for the solutions of \eqref{bozk} and the solutions of the equation satisfied by the difference of two solutions of \eqref{bozk}.
In order to simplify the notations, we will instead derive energy estimates on the solutions $v$ of the more general equation
\begin{equation}\label{eq-u0}
  \partial_tv - D_x^\alpha\partial_xv + \partial_{xyy}v = c_1\partial_x(uv),
\end{equation}
where $u$ solves
\begin{equation}\label{eq-u1}
  \partial_tu - D_x^\alpha\partial_xu + \partial_{xyy}u = c_2\partial_x(u_1u_2).
\end{equation}
Here we assume $c_1,c_2\in\R^\ast$ and that all the functions $u,v,u_1,u_2$ are real-valued.

Let us define our new energy by
\begin{equation}\label{def-EH}
  \mathcal{E}_H(v)(t) =  \|P_Hv(t)\|_{L^2(\R^2)}^2 + H^{-1}\int_{\R^2}\Pi_{\eta}(P_{\ll H}u(t), v(t)) P_Hv(t)
\end{equation}
for any $H\in\D\setminus\{1\}$ and where $\eta$ is a bounded function uniformly in $H$ that will be fixed later. Finally we set
\begin{equation}\label{def-EsT}
  E^s_T(v) = \|P_1v(0)\|_{L^2(\R^2)}^2 + \sum_{H\in\D\setminus\{1\}} \sup_{t_H\in [-T,T]} H^{2s}|\mathcal{E}_H(v)(t_H)|.
\end{equation}

Note that for the integral in \eqref{def-EH} to be non zero, the first occurrence of the function $v$ must be localized in $\Delta_{\sim H}$.

First, we show that if $s\ge 0$, the energy $E^s_T(v)$ is coercive.
\begin{lemma}
  Let $s\ge 0$, $0<T\le 1$ and $u,v,u_1,u_2\in B^s(T)$ be solutions of \eqref{eq-u0}-\eqref{eq-u1} on $[0,T]$. Then it holds that
  \begin{equation}\label{coer.0}
    \|v\|_{B^s(T)}^2 \lesssim E^s_T(v) + \|u\|_{B^0(T)} \|v\|_{B^s(T)}^2.
  \end{equation}
\end{lemma}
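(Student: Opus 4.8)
The key observation is that the leading term of $\mathcal{E}_H(v)$ in \eqref{def-EH} is exactly $\|P_Hv\|_{L^2}^2$, so that proving \eqref{coer.0} amounts to absorbing the cubic correction
$$
R_H(t) := H^{-1}\int_{\R^2}\Pi_\eta(P_{\ll H}u(t),v(t))\,P_Hv(t)
$$
into the right-hand side. Writing $\|P_Hv(t_H)\|_{L^2}^2 = \mathcal{E}_H(v)(t_H) - R_H(t_H)$ and taking the supremum over $t_H\in[-T,T]$, one gets for each $H\in\D\setminus\{1\}$ the bound $\sup_{t_H}\|P_Hv(t_H)\|_{L^2}^2 \le \sup_{t_H}|\mathcal{E}_H(v)(t_H)| + \sup_{t_H}|R_H(t_H)|$. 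Since the $H=1$ contributions to $\|v\|_{B^s(T)}^2$ and to $E^s_T(v)$ coincide, multiplying by $H^{2s}$ and summing reduces \eqref{coer.0} to the single estimate
\begin{equation}\label{plan.1}
\sum_{H\in\D\setminus\{1\}} H^{2s}\sup_{t_H\in[-T,T]}|R_H(t_H)| \lesssim \|u\|_{B^0(T)}\,\|v\|_{B^s(T)}^2.
\end{equation}
Note that for this coercivity statement the equations \eqref{eq-u0}-\eqref{eq-u1} play no role; only the memberships $u\in B^0(T)$ and $v\in B^s(T)$ are used.

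To prove \eqref{plan.1}, I would decompose $P_{\ll H}u = \sum_{H_1\ll H}P_{H_1}u$ and, using the remark following \eqref{def-EsT} that the middle factor must be frequency-localized in $\Delta_{\sim H}$, replace the inner $v$ by $\sum_{H_2\sim H}P_{H_2}v$. For fixed $t_H$ the three factors $P_{H_1}u$, $P_{H_2}v$, $P_Hv$ are then localized in $\Delta_{\sim H_1}$, $\Delta_{\sim H_2}$, $\Delta_{\sim H}$ with $H_1\ll H\sim H_2$, so the trilinear estimate \eqref{PiEst} applies with $H_{min}\sim H_1$ and gives
$$
|R_H(t_H)| \lesssim H^{-1}\sum_{H_1\ll H}\sum_{H_2\sim H} H_1^{\frac{1}{2\alpha}+\frac14}\,\|P_{H_1}u(t_H)\|_{L^2}\|P_{H_2}v(t_H)\|_{L^2}\|P_Hv(t_H)\|_{L^2}.
$$
The sum over $H_2\sim H$ runs over boundedly many terms. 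For the sum over $H_1$, since $\frac{1}{2\alpha}+\frac14>0$, Cauchy-Schwarz together with the geometric growth of the dyadic weights yields $\sum_{H_1\ll H}H_1^{\frac{1}{2\alpha}+\frac14}\|P_{H_1}u(t_H)\|_{L^2} \lesssim H^{\frac{1}{2\alpha}+\frac14}\|u\|_{B^0(T)}$, after bounding $\|P_{H_1}u(t_H)\|_{L^2}\le\sup_t\|P_{H_1}u(t)\|_{L^2}$.

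Putting these together gives $\sup_{t_H}|R_H(t_H)| \lesssim H^{-\frac34+\frac{1}{2\alpha}}\|u\|_{B^0(T)}\,\sup_{t_H}\|P_{\sim H}v(t_H)\|_{L^2}\,\sup_{t_H}\|P_Hv(t_H)\|_{L^2}$. The \emph{crucial} point, and the place where both the normalization $H^{-1}$ in \eqref{def-EH} and the range $1\le\alpha\le2$ enter, is that the surviving power of $H$ is non-positive: indeed $-1+\frac{1}{2\alpha}+\frac14 = -\frac34+\frac{1}{2\alpha} \le -\frac14 < 0$. Hence, splitting $H^{2s}=H^s\cdot H^s$ and summing, the factor $H^{-\frac34+\frac{1}{2\alpha}}\le1$ costs nothing, and a final Cauchy-Schwarz in $H$ (using the bounded overlap of the indices $H_2\sim H$) yields
$$
\sum_{H} H^{2s}\sup_{t_H}|R_H(t_H)| \lesssim \|u\|_{B^0(T)}\Big(\sum_H H^{2s}\sup_{t_H}\|P_{\sim H}v(t_H)\|_{L^2}^2\Big)^{1/2}\Big(\sum_H H^{2s}\sup_{t_H}\|P_Hv(t_H)\|_{L^2}^2\Big)^{1/2} \lesssim \|u\|_{B^0(T)}\|v\|_{B^s(T)}^2,
$$
which is \eqref{plan.1}. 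The main (and essentially only) obstacle here is bookkeeping: one must ensure that the gain $H_1^{\frac{1}{2\alpha}+\frac14}$ from \eqref{PiEst} is summed on the low frequency $H_1\ll H$ rather than on $H$, so that the leftover power of $H$ stays non-positive uniformly over $\alpha\in[1,2]$; everything else is routine Littlewood-Paley summation.
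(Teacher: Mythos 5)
Your proof is correct and takes essentially the same route as the paper: reduce \eqref{coer.0} by the triangle inequality to bounding the cubic correction, control it via \eqref{PiEst} (you apply it dyadically in $H_1\ll H$ and sum by Cauchy--Schwarz, whereas the paper applies it directly to $P_{\ll H}u$, a purely cosmetic difference), invoke the sign condition $-1+\tfrac{1}{2\alpha}+\tfrac14\le 0$ for $\alpha\in[1,2]$, and finish with Cauchy--Schwarz over $H$.
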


\begin{proof}
  We infer from \eqref{def-EsT}, the definition of $B^s(T)$ and the triangle inequality that
  \begin{equation}\label{coer.1}
    \|v\|_{B^s(T)}^2\lesssim E^s_T(v) + \sum_{H\in\D\setminus\{1\}} \sup_{t_H\in[-T,T]} H^{2s-1} \left|\int_{\R^2} (\Pi_{\eta}(P_{\ll H}u, v) P_Hv)(t_H)\right|
  \end{equation}
  Thanks to estimate \eqref{PiEst}, we have
  \begin{multline}\label{coer.2}
    H^{2s-1}\left|\int_{\R^2}(\Pi_{\eta}(P_{\ll H}u, v) P_Hv)(t_H)\right| \\
    \lesssim H^{2s-1}H^{\frac 1{2\alpha}+\frac 14} \|P_{\ll H}u(t_H)\|_{L^2} \|P_{\sim H}v(t_H)\|_{L^2} \|P_Hv(t_H)\|_{L^2}.
  \end{multline}
  Since $-1+\frac 1{2\alpha}+\frac 14\le 0$, we deduce estimate \eqref{coer.0} for $s\ge 0$ gathering \eqref{coer.1}-\eqref{coer.2} and using Cauchy-Schwarz.
\end{proof}

\begin{proposition}\label{ee}
Assume $s > s_\alpha$ and $T\in (0,1]$. Then if $u,v,u_1,u_2\in C([-T,T]; E^\infty)$ are solutions of \eqref{eq-u0}-\eqref{eq-u1}, we have that
\begin{multline}\label{ee.0}
E^s_T(v) \lesssim (1+\|u_0\|_{E^0})\|v_0\|_{E^s}^2 + \|u\|_{F^{s_\alpha+}(T)} \|v\|_{F^{s}(T)}^2+\|u\|_{F^{s+s_\alpha+}(T)}\|v\|_{F^0(T)} \|v\|_{F^s(T)}\\
+ (\|u\|_{B^{s}(T)}^2 + \|u_1\|_{B^{s}(T)}\|u_2\|_{B^{s}(T)}) \|v\|_{B^s(T)}^2.
\end{multline}
and
\begin{multline}\label{ee.00}
E^0_T(v) \lesssim (1+\|u_0\|_{E^0})\|v_0\|_{E^0}^2 + \|u\|_{F^{s_\alpha+}(T)} \|v\|_{F^{0}(T)}^2 \\
+ (\|u\|_{B^{s_\alpha+}(T)}^2 + \|u_1\|_{B^{s_\alpha+}(T)}\|u_2\|_{B^{s_\alpha+}(T)}) \|v\|_{B^0(T)}^2.
\end{multline}
Moreover in the case where $u=v$ it holds that
\begin{equation}\label{ee.000}
E^s_T(u) \lesssim (1+\|u_0\|_{E^0})\|u_0\|_{E^s}^2 + \|u\|_{F^{s_\alpha+}(T)} \|u\|_{F^{s}(T)}^2 + \|u\|_{B^{s_\alpha+}(T)}^2\|u\|_{B^s(T)}^2.
\end{equation}
\end{proposition}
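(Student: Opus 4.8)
The plan is to differentiate in time the localized energy $\mathcal{E}_H(v)$ defined in \eqref{def-EH}, to substitute the evolution equations \eqref{eq-u0}--\eqref{eq-u1}, and to fix the symbol $\eta$ so that the worst quadratic interaction is cancelled up to resonant contributions and genuinely quartic remainders; one then integrates in time on $[0,t_H]$, sums the dyadic pieces against $H^{2s}$, and reads off the three estimates.

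First I would treat the quadratic part $\frac{d}{dt}\|P_Hv\|_{L^2}^2 = 2\int P_Hv\,P_H\partial_tv$. Since the free operator $D_x^\alpha\partial_x-\partial_{xyy}$ is the Fourier multiplier by the purely imaginary symbol $i\omega$ (see \eqref{pomega}) and $v$ is real, its contribution $\int P_Hv\,P_H(D_x^\alpha\partial_x-\partial_{xyy})v$ vanishes, leaving only
\[
(\mathrm{I}) = 2c_1\int_{\R^2}P_Hv\,P_H\partial_x(uv).
\]
The decisive regime is the low-high interaction $P_{\ll H}u\cdot P_{\sim H}v$. There I would split off the symmetric piece $c_1\int(\partial_x P_{\ll H}u)(P_Hv)^2$, obtained by replacing $P_H(P_{\ll H}u\cdot v)$ by $P_{\ll H}u\cdot P_Hv$ and integrating by parts in $x$; this piece carries the derivative on the low frequency and is directly controlled by the $F$-norms. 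The leftover commutator $[P_H,P_{\ll H}u]$ term is the bad contribution which, as explained in the introduction, cannot be symmetrized without losing a $y$-derivative, since $P_H$ localizes in $h$ rather than in $\xi$.

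The cubic correction is designed to absorb this commutator term. Differentiating $H^{-1}\int\Pi_\eta(P_{\ll H}u,v)P_Hv$ by \eqref{PiPart} and substituting \eqref{eq-u0}--\eqref{eq-u1} in each slot, the three free-evolution contributions combine, through $\omega(\zeta_1)+\omega(\zeta_2)-\omega(\zeta_1+\zeta_2)=-\Omega(\zeta_1,\zeta_2)$ from \eqref{omega}, into the pseudo-product with symbol $-iH^{-1}\eta\,\Omega$. I would then choose $\eta$ so that this cancels the commutator remainder; schematically $\eta\sim H\sigma/\Omega$, where $\sigma\sim H^{-1}\xi_2\,\nabla h(\zeta_2)\cdot\zeta_1$ is the (small) symbol of the commutator piece. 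The hard part will be to verify that this $\eta$ is bounded uniformly in $H$, as required by \eqref{def-EH}: a Taylor expansion in the low frequency gives $\Omega\approx\nabla\omega(\zeta_2)\cdot\zeta_1$ and $H\sigma\approx\xi_2\,\nabla h(\zeta_2)\cdot\zeta_1$, with $|H\sigma|\lesssim|\Omega|$ away from the zero set of $\Omega$, so that $\eta$ is bounded there; but $\Omega$ degenerates near the critical region $A_\delta=\{2\mu^2\sim\alpha(\alpha+1)|\xi|^\alpha\}$. On that resonant region I would abandon the cancellation and instead estimate the quadratic term directly using the improved Strichartz bound of Corollary \ref{l4}, which gains $\alpha/8$ derivatives precisely off $A_\delta$; balancing this gain against the $L^2$-bilinear factor $H_{min}^{\frac1{2\alpha}+\frac14}$ from Corollary \ref{bilinear} is what produces the threshold $s>s_\alpha$.

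It remains to control the nonlinear (quartic) parts of the cubic derivative, namely the terms $c_2H^{-1}\int\Pi_\eta(P_{\ll H}\partial_x(u_1u_2),v)P_Hv$ and $c_1H^{-1}\int\Pi_\eta(P_{\ll H}u,\partial_x(uv))P_Hv$ together with its $P_Hv$ analogue. Using the boundedness of $\eta$, the symmetry \eqref{symPi} and the pointwise bound \eqref{PiEst}, I would reduce each to a product of $L^2$-norms of frequency-localized factors and close it with Corollary \ref{bilinear} and, for the resonant pieces, Corollary \ref{l4}; since these are estimated at fixed times they generate the $B^s$-contributions in \eqref{ee.0}, the two distinct nonlinearities accounting for the factors $\|u\|_{B^s(T)}^2$ and $\|u_1\|_{B^s(T)}\|u_2\|_{B^s(T)}$. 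After integrating the identity for $\frac{d}{dt}\mathcal{E}_H(v)$ over $[0,t_H]$, the short-time structure (intervals of length $H^{-\beta}$) converts the time integrals of the quadratic remainders into the $F$-norm terms $\|u\|_{F^{s_\alpha+}(T)}\|v\|_{F^s(T)}^2$ and, in the high-low regime where $u$ sits at high frequency, $\|u\|_{F^{s+s_\alpha+}(T)}\|v\|_{F^0(T)}\|v\|_{F^s(T)}$, while the boundary term at $t=0$ gives $(1+\|u_0\|_{E^0})\|v_0\|_{E^s}^2$ (the factor $\|u_0\|_{E^0}$ coming from the cubic correction at $t=0$ via \eqref{PiEst}). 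Summing over $H\in\D$ against $H^{2s}$ yields \eqref{ee.0}; estimate \eqref{ee.00} follows by taking $s=0$ (so that every threshold collapses to $s_\alpha+$), and \eqref{ee.000} by specializing $u=v$ and $u_1=u_2=u$.
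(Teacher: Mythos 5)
Your skeleton (differentiate $\mathcal{E}_H$, use the cubic correction to cancel the bad low-high commutator, bound the quartic terms by \eqref{PiEst}, integrate over $[0,t_H]$ and sum against $H^{2\sigma}$) matches the paper, as does your treatment of the boundary term and of the high-low regime producing $\|u\|_{F^{s+s_\alpha+}(T)}\|v\|_{F^0(T)}\|v\|_{F^s(T)}$. But the central mechanism you propose --- the normal-form choice $\eta\sim H\sigma/\Omega$ --- does not work, and it is precisely the route the paper is designed to avoid. In the low-high regime, for fixed high frequency $\zeta_2$ one has $\Omega(\zeta_1,\zeta_2)\approx\nabla\omega(\zeta_2)\cdot\zeta_1=\bigl[(\alpha+1)|\xi_2|^\alpha+\mu_2^2\bigr]\xi_1+2\xi_2\mu_2\mu_1$, while your commutator symbol satisfies $H\xi_2\sigma\approx\varphi'(\cdot)\bigl[\alpha(\alpha+1)|\xi_2|^\alpha\xi_1+2\xi_2\mu_2\mu_1\bigr]$. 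These are two linear forms in $\zeta_1$ with the same $\mu_1$-coefficient but different $\xi_1$-coefficients; they are proportional only when $\mu_2^2=(\alpha^2-1)|\xi_2|^\alpha$. For every other $\zeta_2$ the quotient blows up along the whole line $\{\zeta_1:\nabla\omega(\zeta_2)\cdot\zeta_1=0\}$, so the zero set of $\Omega$ is a codimension-one set attached to \emph{every} high frequency, not a neighbourhood of $A_\delta$. You are conflating the resonance set of $\Omega$ with the curvature-degeneration set $A_\delta$ (where the Hessian determinant $2\alpha(\alpha+1)|\xi|^\alpha-4\mu^2$ of $\omega$ vanishes); these are different objects. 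This is exactly what the introduction means by ``the cancelation zone of the resonance function $\Omega$ \ldots seems too large'': a Molinet--Vento-type modified energy with $\Omega$ in the denominator is ruled out here. Your fallback is also incoherent: Corollary \ref{l4} only applies to functions frequency-localized in $A_\delta^c$, so it gives nothing on the set you leave unresolved (which, again, is not $A_\delta$).

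What the paper actually does requires no small divisors. Taylor-expanding the cutoff $\psi_H$ in $\mathcal{I}_H^1(v)=-2c_1\int P_H(P_{\ll H}u\,v)P_Hv_x$ produces, besides the symmetrizable piece $\int P_{\ll H}u\,P_Hv\,P_Hv_x$, three pseudo-products with \emph{bounded} symbols $\eta_1,\eta_2,\eta_3$, pairing $u_x$ with $v$, $u_{yy}$ with $v$, and $u_y$ with $v_y$ respectively; only the last one, $\mathcal{I}_H^{14}=-2c_1H^{-1}\int\Pi_{\eta_3}(P_{\ll H}u_y,v_y)P_Hv_x$, is out of reach of Lemma \ref{estI}, since it has no derivative to spare on the low-frequency factor. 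The correction symbol $\eta$ is then taken to be a constant multiple of $\eta_3$ itself --- manifestly bounded uniformly in $H$ --- and the cancellation comes only from the $\partial_{xyy}$ part of the free flow acting on the cubic correction: after integrations by parts via \eqref{PiPart}, $\mathcal{L}_H^3(v)$ contains exactly the term $-2H^{-1}\int\Pi_\eta(P_{\ll H}u_y,v_y)P_Hv_x$, which kills $\mathcal{I}_H^{14}$, while every remaining piece ($\mathcal{L}_H^1$, $\mathcal{L}_H^2$, and the leftover of $\mathcal{L}_H^3$) carries a derivative on the low-frequency factor and is estimated by Lemma \ref{estI}. The resonance-versus-Strichartz dichotomy you invoke does occur in the paper, but one level lower: inside the proof of the $L^2$ bilinear estimate \eqref{prop.bilinear.01} for comparable frequencies (via Lemma \ref{tech} on $A_\delta$ and Corollary \ref{l4} off it), which is what drives the threshold $s>s_\alpha$ through the high-high terms $\mathcal{I}_H^3$, $\mathcal{I}_H^4$ --- not in the design of $\eta$.
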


The following result will be of constant use in the proof of Proposition \ref{ee}.
\begin{lemma}\label{estI}
  Assume that $T\in(0,1]$, $H_1,H_2,H_3\in \D$ and that $u_i\in F_{H_i}$ for $i=1,2,3$.
  \begin{enumerate}
    \item In the case $H_{min}\ll H_{max}$ it holds that
    \begin{equation}\label{estI.0}
      \left| \int_{[0,T]\times\R^2} \Pi_\eta(u_1,u_2)u_3 \right| \lesssim (H_{max}^{\frac 1\alpha-1}\vee H_{max}^{(-\frac 12)+}) H_{min}^{1/4} \prod_{i=1}^3 \|u_i\|_{F_{H_i}}.
    \end{equation}
    \item If $\F(u_i)$ are supported in $\R\times I_{N_i}\times\R$ for $i=1,2,3$ and $H_{min}\sim H_{max}$ then
    \begin{equation}\label{estI.1}
      \left| \int_{[0,T]\times\R^2} \Pi_\eta(u_1,u_2)u_3 \right| \lesssim N_{max}^{-\alpha/2} H_{min}^{(\frac 1\alpha-\frac 14)+} \prod_{i=1}^3 \|u_i\|_{F_{H_i}}.
    \end{equation}
  \end{enumerate}
\end{lemma}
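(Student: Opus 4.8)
The plan is to reduce the space--time trilinear integral to the purely frequency-side $L^2$ bilinear estimates of Corollary \ref{bilinear}, exploiting the short-time structure of the spaces $F_{H_i}$. Throughout set $H=H_{max}$ and recall that $\beta=\frac2\alpha-1\ge0$, so that the natural time scale attached to the highest frequency is $H^{-\beta}$, with $H^{\beta/2}=H^{\frac1\alpha-\frac12}$.

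First I would localize in time at this scale. Fix a smooth partition $\sum_{m\in\Z}\gamma^2(\cdot-m)=1$ with $\gamma$ supported in $[-1,1]$, and write $\gamma_m(t)=\gamma(H^\beta t-m)$. Since $\Pi_\eta$ acts only on the spatial variables and is therefore local in $t$, we have
$$\int_{[0,T]\times\R^2}\Pi_\eta(u_1,u_2)u_3=\sum_{m}\int_{\R\times\R^2}1_{[0,T]}(t)\,\gamma_m^2(t)\,\Pi_\eta(u_1,u_2)u_3,$$
and, because $T\le1$ and $\beta\ge0$, only $O(1+TH^\beta)\lesssim H^\beta$ values of $m$ contribute. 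For each fixed $m$ I distribute one factor $\gamma_m$ onto $u_1$ and one onto $u_2$; as $\Pi_\eta(\gamma_mu_1,\gamma_mu_2)$ is then supported in a time interval of length $\sim H^{-\beta}$, I may freely insert a fattened cutoff in front of $u_3$ so that all three factors are time-localized at scale $H^{-\beta}$, the sharp cutoff $1_{[0,T]}$ being absorbed by Lemma \ref{techX}. Writing $f_i=|\F(\gamma_mu_i)|$ (with the extra $1_{[0,T]}$ on the third factor), Plancherel's theorem together with $|\eta|\le1$ yields $|\int\cdots|\lesssim\int_{\R^3}(f_1\ast f_2)\,f_3$.

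Then I would split each $f_i=\sum_{L_i\ge\lfloor H^\beta\rfloor}f_{i,L_i}$ according to the modulation $|\tau-\omega(\zeta)|\sim L_i$ and apply Corollary \ref{bilinear} (with Cauchy--Schwarz) to $\int(f_{1,L_1}\ast f_{2,L_2})f_{3,L_3}$. In case (1) the two non-minimal frequencies are $\sim H_{max}$, so estimates \eqref{bilinear.1}--\eqref{bilinear.01} give the constant $H_{max}^{-1/2}H_{min}^{1/4}$ with modulation weight $L_{min}^{1/2}L_{med}^{1/2}$ (resp. $L_{min}^{1/2}L_{max}^{1/2}$), i.e. a gain of $L_{max}^{-1/2}$ (resp. $L_{med}^{-1/2}$) relative to $\prod L_i^{1/2}$. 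Summing over $L_i$ via $\sum_{L_i\ge\lfloor H^\beta\rfloor}L_i^{1/2}\|f_{i,L_i}\|_{L^2}\lesssim\|u_i\|_{F_{H_i}}$ — which for the low-frequency factor is exactly Corollary \ref{cor-fond} — the gained $L^{-1/2}\le\lfloor H^\beta\rfloor^{-1/2}$ produces a factor $H^{-\beta/2}$. Combined with the $\lesssim H^\beta$ values of $m$ this gives
$$\Big|\int_{[0,T]\times\R^2}\Pi_\eta(u_1,u_2)u_3\Big|\lesssim H^\beta\cdot H_{max}^{-1/2}H_{min}^{1/4}\cdot H^{-\beta/2}\prod_i\|u_i\|_{F_{H_i}}=H_{max}^{\frac1\alpha-1}H_{min}^{1/4}\prod_i\|u_i\|_{F_{H_i}},$$
which is the first term of \eqref{estI.0}; the alternative exponent $(-\tfrac12)+$ accommodates the endpoint $\alpha=2$, where $\beta=0$, no time interval is lost, and $H_{max}^{\frac1\alpha-1}=H_{max}^{-1/2}$. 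Case (2) is handled identically, but now $H_{min}\sim H_{max}=H$ and the $N_i$ are fixed, so the Strichartz-based estimate \eqref{bilinear.2} applies directly, giving constant $N_{max}^{-\alpha/2}H_{min}^{(1/4)+}$ with weight $L_{med}^{1/2}L_{max}^{1/2}$, hence a gain of $L_{min}^{-1/2}\lesssim H^{-\beta/2}$; the same bookkeeping, using $H^{\beta/2}=H_{min}^{\frac1\alpha-\frac12}$, produces $N_{max}^{-\alpha/2}H_{min}^{(\frac1\alpha-\frac14)+}\prod_i\|u_i\|_{F_{H_i}}$, which is \eqref{estI.1}.

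The main obstacle is the exponent bookkeeping: one must check that the loss $H^\beta$ coming from the number of time intervals is compensated, up to the claimed factor $H^{\beta/2}=H_{max}^{\frac1\alpha-\frac12}$, precisely by the $H^{-\beta/2}$ gain extracted from the largest modulation in the bilinear estimate, and that this gain survives the dyadic summation over $L_1,L_2,L_3$ (using $\sum_{L\ge\lfloor H^\beta\rfloor}L^{-1}\lesssim H^{-\beta}$). A secondary technical point is the careful distribution of the time cutoffs $\gamma_m$ and of the sharp cutoff $1_{[0,T]}$, so that each factor can legitimately be measured in its own $F_{H_i}$ norm through Corollary \ref{cor-fond} and Lemma \ref{techX}, and so that the spatial-frequency supports $\Delta_{H_i}$ (and $I_{N_i}$ in case (2)), being unaffected by time localization, are preserved when Corollary \ref{bilinear} is invoked.
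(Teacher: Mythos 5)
Your overall scheme (time localization at scale $H_{max}^{-\beta}$, reduction to the $L^2$ bilinear estimates of Corollary \ref{bilinear}, modulation summation via Corollary \ref{cor-fond}) is the same as the paper's, and your treatment of the bulk of the time intervals is correct. The genuine gap is in the sentence ``the sharp cutoff $1_{[0,T]}$ being absorbed by Lemma \ref{techX}''. For the $O(1)$ values of $m$ whose window straddles $t=0$ or $t=T$, the function $\gamma(H^\beta t-m)1_{[0,T]}u_i$ is localized by a \emph{sharp} cutoff, and such a cutoff spreads the modulation support over all dyadic scales: you can no longer write $f_i=\sum_{L_i\ge\lfloor H^\beta\rfloor}f_{i,L_i}$, and Corollary \ref{cor-fond}, which is stated for the smooth cutoff $\varphi_1(H^\beta(\cdot-\widetilde{t}))$, does not apply. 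Lemma \ref{techX} only yields the sup-type bound $\sup_L L^{1/2}\|\varphi_L(\tau-\omega(\zeta))\F(1_I f)\|_{L^2}\lesssim\|\F(f)\|_{X_H}$, i.e. $\ell^\infty_L$ control instead of the $\ell^1_L$ control built into $X_H$; with such bounds the modulation sums must be made to converge by the gain in the bilinear estimate itself, and since $L_{min}$ can now be as small as $1$, no $H^{-\beta/2}$ gain is available on these intervals. This is exactly why the paper splits the $m$-sum into interior intervals $\mathcal{A}$ and boundary intervals $\mathcal{B}$, and why the statement carries the extra term $H_{max}^{(-\frac 12)+}$: it is the contribution of $\mathcal{B}$ (present for every $\alpha$, dominant only at $\alpha=2$), not, as you assert, a cosmetic rewriting of $H_{max}^{\frac 1\alpha-1}$ at the endpoint $\alpha=2$. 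As written, your argument would prove the bound $H_{max}^{\frac 1\alpha-1}H_{min}^{1/4}$ with no epsilon loss, which the sharp-cutoff contribution does not allow.

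The gap is more serious in case (2). There the only gain in \eqref{bilinear.2} is $L_{min}^{-1/2}$ relative to $\prod_i L_i^{1/2}$; on the boundary intervals, where each factor is controlled only in sup over $L_i$ and there is no lower bound on the $L_i$, the sum over $(L_{med},L_{max})$ subject to $L_{max}\sim\max(L_{med},|\Omega|)$ carries no decay at all and diverges. The paper repairs this by interpolating \eqref{prop.bilinear.01} with \eqref{prop.bilinear.00}, obtaining a weight $L_{min}^{\eps/2}L_{med}^{\frac{1-\eps}2}L_{max}^{\frac{1-\eps}2}$ that distributes summable decay over $L_{med}$ and $L_{max}$, at the cost of an $H_{min}^{0+}$ factor that is absorbed into the exponent $(\frac 1\alpha-\frac 14)+$ of \eqref{estI.1}. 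So ``case (2) is handled identically'' fails precisely on the boundary intervals; an additional interpolation step is required there.
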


\begin{remark}
 Observe that in the right-hand side of \eqref{estI.0}, we have $H_{max}^{\frac 1\alpha-1}\vee H_{max}^{(-\frac 12)+} = H_{max}^{\frac 1\alpha-1}$ as soon as $\alpha <2$. The lost of $H_{max}^{0+}$ in the particular case $\alpha=2$ is due to the localization in $[0,T]$.
\end{remark}

\begin{proof}
  From \eqref{symPi} we may always assume $H_1\le H_2\le H_3$. We first prove estimate \eqref{estI.0}. Let $\gamma:\R\to [0,1]$ be a smooth function supported in $[-1,1]$ with the property that
  $$
  \sum_{m\in\Z} \gamma^3(x-m)=1,\ \forall x\in\R.
  $$
  Then it follows that
  \begin{equation}\label{estI.2}
  \left| \int_{[0,T]\times\R^2} \Pi_\eta(u_1,u_2)u_3 \right| \lesssim \sum_{|m|\lesssim H_3^\beta} I_T^m
  \end{equation}
  with
  \begin{equation}\label{estI.3}
    I_T^m =
      \left| \int_{\R^3} \Pi_\eta \left(\gamma(H_3^\beta t-m)1_{[0,T]} u_1, \gamma(H_3^\beta t-m)1_{[0,T]} u_2\right) \gamma(H_3^\beta t-m)1_{[0,T]} u_3\right|.
  \end{equation}
  Now we observe that the sum on the right-hand side of \eqref{estI.2} is taken over the two disjoint sets
  $$
  \mathcal{A} = \{m\in\Z : \gamma(H_3^\beta t-m)1_{[0,T]} = \gamma(H_3^\beta t-m)\},
  $$
  and
  $$
  \mathcal{B} = \{ m\in\Z : \gamma(H_3^\beta t-m)1_{[0,T]} \neq \gamma(H_3^\beta t-m)\textrm{ and } \gamma(H_3^\beta t-m)1_{[0,T]}\neq 0\}.
  $$
  To deal with the sum over $\mathcal{A}$, we set
  $$
  f_{H_i,\lfloor H_3^\beta\rfloor}^m = \varphi_{\le \lfloor H_3^\beta\rfloor}(\tau-\omega(\zeta)) |\F(\gamma(H_3^\beta t-m)u_i)|,
  $$
  and
  $$
  f_{H_i,L}^m = \varphi_L(\tau-\omega(\zeta)) |\F(\gamma(H_3^\beta t-m)u_i)|,\ L>\lfloor H_3^\beta\rfloor,
  $$
  for each $m\in\mathcal{A}$ and $i\in\{1,2,3\}$. Therefore, we deduce by using Plancherel's identity and estimates \eqref{prop.bilinear.0}-\eqref{prop.bilinear.001} that
  \begin{align*}
    \sum_{m\in\mathcal{A}} I_T^m
    &\lesssim \sup_{m\in\mathcal{A}} H_3^\beta \|\eta\|_{L^\infty} \sum_{L_1,L_2,L_3\ge \lfloor H_3^\beta\rfloor} \int_{\R^3} (f_{H_1,L_1}^m \ast f_{H_2,L_2}^m)\cdot f_{H_3,L_3}^m \\
    &\lesssim \sup_{m\in\mathcal{A}} H_3^{\frac{\beta-1}2} H_1^{1/4} \prod_{i=1}^3 \sum_{L_i\ge \lfloor H_3^\beta\rfloor}L_i^{1/2} \|f_{H_i,L_i}^m\|_{L^2}.
  \end{align*}
  This implies together with Corollary \ref{cor-fond} that
  \begin{equation}\label{estI.4}
    \sum_{m\in\mathcal{A}} I_T^m
    \lesssim H_{3}^{\frac 1\alpha-1}H_{1}^{1/4} \prod_{i=1}^3 \|u_i\|_{F_{H_i}}.
    \end{equation}
  Now observe that $\#\mathcal{B}\lesssim 1$. We set
  $$
  g_{H_i,L}^m = \varphi_L(\tau-\omega(\zeta)) |\F(\gamma(H_3^\beta t-m)1_{[0,T]}u_i)|
  $$
  for $i\in\{1,2,3\}$, $L\in\D$ and $m\in\mathcal{B}$. Then, we deduce using again \eqref{prop.bilinear.0}-\eqref{prop.bilinear.001} as well as Lemma \ref{techX} that
  \begin{align}
    \sum_{m\in\mathcal{B}} I_T^m &\lesssim \sup_{m\in\mathcal{B}} \sum_{L_1,L_2,L_3\in\D} \int_{\R^3} (g_{H_1,L_1}^m\ast g_{H_2,L_2}^m)\cdot g_{H_3,L_3}^m \notag\\
    &\lesssim \sup_{m\in\mathcal{B}} H_3^{-1/2}H_1^{1/4} \sum_{L_1,L_2,L_3\in\D\atop L_{max}\sim\max(L_{med},|\Omega|)} L_{med}^{-1/2} \prod_{i=1}^3 \sup_{L_i\in\D} L_i^{1/2} \|g_{H_i,L_i}^m\|_{L^2}\notag \\
    &\lesssim H_3^{(-1/2)+} H_1^{1/4} \prod_{i=1}^3 \|u_i\|_{F_{H_i}}. \label{estI.5}
  \end{align}
  We deduce estimate \eqref{estI.0} gathering \eqref{estI.2}-\eqref{estI.5}. Finally, the proof of \eqref{estI.1} follows the same lines by using \eqref{prop.bilinear.01} instead of \eqref{prop.bilinear.0}-\eqref{prop.bilinear.001}. We also need to interpolate \eqref{prop.bilinear.01} with \eqref{prop.bilinear.00} to get
  $$
  \int_{\R^3} (f_1\ast f_2)\cdot f_3 \lesssim N_{max}^{-1/2} H_{min}^{\frac 14+(\frac 1{2\alpha}+\frac 34)\eps} L_{min}^{\eps/2}L_{med}^{\frac{1-\eps}2}L_{max}^{\frac{1-\eps}2}
  $$
  for $\eps\in(0,1)$. With this estimate in hand, we are able to control the contribution of the sum in the region $\mathcal{B}$.
\end{proof}

\begin{proof}[Proof of Proposition \ref{ee}] Let $v,u,u_1,u_2\in C([-T,T], E^\infty)$ be solutions to \eqref{eq-u0}-\eqref{eq-u1}. We choose some extensions $\widetilde{v}, \widetilde{u}, \widetilde{u_1}, \widetilde{u_2}$ of $v,u,u_1,u_2$ respectively on $\R^3$ satisfying $\|\widetilde{v}\|_{F^s}\lesssim \|v\|_{F^s(T)}$, $\|\widetilde{u}\|_{F^s}\lesssim \|u\|_{F^s(T)}$ and $\|\widetilde{u_i}\|_{F^s}\lesssim \|u_i\|_{F^s(T)}$ for $i=1,2$.

We fix $s>s_\alpha$ and set $\sigma\in\{0,s\}$.
 Then, for any $H\in \D\setminus\{1\}$, we differentiate $\mathcal{E}_H(v)$ with respect to $t$ and deduce using \eqref{eq-u0}-\eqref{eq-u1} as well as \eqref{PiPart} that
\begin{equation}\label{ee.1}
\frac{d}{dt}\mathcal{E}_H(v) = \mathcal{I}_H(v) + \mathcal{L}_H(v) + \mathcal{N}_H(v)
\end{equation}
with
$$
\mathcal{I}_H(v) = -2c_1 \int_{\R^2} P_H(uv)P_Hv_x,
$$
\begin{align*}
\mathcal{L}_H(v) &= -H^{-1} \int_{\R^2} \Pi_\eta(P_{\ll H}(-D^{\alpha}_x\partial_x+\partial_{xyy})u, v) P_Hv \\
&\quad - H^{-1}\int_{\R^2} \Pi_\eta(P_{\ll H}u, (-D^{\alpha}_x\partial_x+\partial_{xyy})v) P_Hv  \\
&\quad -H^{-1} \int_{\R^2} \Pi_\eta(P_{\ll H}u, v) P_H(-D^{\alpha}_x\partial_x+\partial_{xyy})v,
\end{align*}
and
\begin{align*}
\mathcal{N}_H(v) &= c_2H^{-1} \int_{\R^2} \Pi_\eta(P_{\ll H}\partial_x(u_1u_2), v)P_Hv \\
&\quad + c_1H^{-1}\int_{\R^2} \Pi_\eta(P_{\ll H}u, \partial_x(uv)) P_Hv\\
& \quad +c_1H^{-1}\int_{\R^2} \Pi_\eta(P_{\ll H}u, v) P_H\partial_x(uv)\\
&:= \mathcal{N}_H^1(v) + \mathcal{N}_H^2(v)+ \mathcal{N}_H^3(v).
\end{align*}
Now we fix $t_H\in [-T,T]$. Without loss of generality, we can assume that $0<t_H<T$. Therefore we obtain integrating \eqref{ee.1} between 0 and $t_H$ that
\begin{equation}\label{ee.2}
|\mathcal{E}_H(v)(t_H)| \le |\mathcal{E}_H(v)(0)| + \left| \int_0^{t_H} (\mathcal{I}_H(v) + \mathcal{L}_H(v) + \mathcal{N}_H(v)) dt\right|.
\end{equation}
Using H\"{o}lder and Bernstein inequalities, the first term in the right-hand side of \eqref{ee.2} is easily estimated by
\begin{equation}\label{ee.3}
\sum_{H\in \D\setminus\{1\}} H^{2\sigma}|\mathcal{E}_H(v)(0)| \lesssim (1+\|u_0\|_{E^0})\|v_0\|_{E^\sigma}^2.
\end{equation}
Next we estimate the second term in the right-hand side of \eqref{ee.2}.\\

\noindent
\textit{Estimates for the cubic terms.} By localization considerations, we obtain
\begin{align*}
\mathcal{I}_H(v) &= -2c_1\int_{\R^2}P_H(P_{\ll H}u v)P_Hv_x - 2c_1\int_{\R^2} P_H(uP_{\ll H}v)P_Hv_x\\
&\quad -2c_1\int_{\R^2} P_H(P_{\sim H}uP_{\sim H}v)P_Hv_x -2c_1 \sum_{H_1\gg H} \int_{\R^2} P_H(P_{H_1}u P_{\sim H_1}v)P_Hv_x\\
&:= \sum_{i=1}^4 \mathcal{I}_H^i(v).
\end{align*}
Note that in the case where $u=v$, we have $\mathcal{I}_H^1(v)=\mathcal{I}_H^2(v)$. Clearly we get by estimate \eqref{estI.1} that
\begin{align*}
  \left|\int_0^{t_H}\mathcal{I}_H^3(v)dt\right| &\lesssim \sum_{N_1,N_2,N_3\lesssim H^{1/\alpha}} H^{(\frac 1\alpha-\frac 14)+} N_3 N_{max}^{-\alpha/2} \|P_{\sim H}P_{N_1}^x\widetilde{u}\|_{F_H} \|P_{\sim H}P_{N_2}^x\widetilde{v}\|_{F_H} \|P_HP_{N_3}^x\widetilde{v}\|_{F_H}\\
  &\lesssim H^{s_\alpha+} \|P_{\sim H}\widetilde{u}\|_{F_H} \|P_{\sim H}\widetilde{v}\|_{F_H} \|P_H\widetilde{v}\|_{F_H},
\end{align*}
which combined with Cauchy-Schwarz inequality yields
\begin{equation}\label{ee.4}
  \sum_{H\in\D\setminus\{1\}} \sup_{t_H\in [0,T]} H^{2\sigma}\left|\int_0^{t_H}\mathcal{I}_H^3(v)dt\right|\lesssim \|u\|_{F^{s_\alpha+}(T)} \|v\|_{F^\sigma(T)}^2.
\end{equation}
Similarly, we get applying estimate \eqref{estI.0} that
$$
  \left|\int_0^{t_H}\mathcal{I}_H^4(v)dt\right|\lesssim  \sum_{H_1\gg H} H_1^{(\frac 1\alpha-1)+} H^{1/4} H^{1/\alpha} \|P_{H_1}\widetilde{u}\|_{F_{H_1}} \|P_{\sim H_1}\widetilde{v}\|_{F_{H_1}} \|P_H\widetilde{v}\|_{F_H}.
$$
From this and Cauchy-Schwarz inequality we infer
\begin{equation}\label{ee.5}
  \sum_{H\in\D\setminus\{1\}} \sup_{t_H\in [0,T]} H^{2\sigma} \left|\int_0^{t_H}\mathcal{I}_H^4(v)dt\right|\lesssim \|u\|_{F^{s_\alpha+}(T)} \|v\|_{F^\sigma(T)}^2.
\end{equation}
In the case $u\neq v$ we estimate $\mathcal{I}_H^2(v)$ thanks to Lemma \ref{estI} by
$$
\left|\int_0^{t_H}\mathcal{I}_H^2(v)dt\right| \lesssim \sum_{H_1\ll H} H^{s_\alpha+} \|P_{\sim H}u\|_{F_H}  \|P_{H_1}v\|_{F_H} \|P_Hv\|_{F_H}
$$
so that
\begin{equation}\label{ee.6}
\sum_{H\in\D\setminus\{1\}} \sup_{t_H\in [0,T]} H^{2\sigma} \left|\int_0^{t_H}\mathcal{I}_H^2(v)dt\right|\lesssim \|u\|_{F^{\sigma+s_\alpha+}(T)} \|v\|_{F^0(T)}  \|v\|_{F^\sigma(T)}.
\end{equation}
Therefore, it remains to estimate $\mathcal{I}_H^1(v)+\mathcal{L}_H(v)$ in the case $u\neq v$ and $2\mathcal{I}_H^1(v)+\mathcal{L}_H(v)$ when $u=v$.
Using a Taylor expansion of $\psi_H$ we may decompose $\mathcal{I}_H^1(v)$ as
\begin{align*}
\mathcal{I}_H^1(v) &= -2c_1\int_{\R^2} P_{\ll H}u P_HvP_Hv_x - 2c_1 H^{-1/\alpha} \int_{\R^2} \Pi_{\eta_1}(P_{\ll H}u_x, v) P_Hv_x\\
&\quad -2c_1 H^{-1} \int_{\R^2} \Pi_{\eta_2}(P_{\ll H} u_{yy}, v) P_Hv_x -2c_1 H^{-1}\int_{\R^2} \Pi_{\eta_3}(P_{\ll H}u_y, v_y) P_Hv_x\\
&:= \sum_{i=1}^4 \mathcal{I}^{1i}_H(v)
\end{align*}
where $\eta_i$, $i=1,2,3$ are bounded uniformly in $H$ and defined by
\begin{align*}
\eta_1(\zeta_1,\zeta_2) &= -i\alpha H^{\frac 1\alpha-1} \int_0^1 |\theta\xi_1+\xi_2|^{\alpha-1}\sgn(\theta\xi_1+\xi_2) \varphi'\left(\frac{|\theta\xi_1+\xi_2|^\alpha+(\theta\mu_1+\mu_2)^2}H\right)d\theta\\
\eta_2(\zeta_1,\zeta_2) &= -2  \int_0^1 \theta \varphi'\left(\frac{|\theta\xi_1+\xi_2|^\alpha+(\theta\mu_1+\mu_2)^2}H\right)d\theta\\
\eta_3(\zeta_1,\zeta_2) &= -2  \int_0^1  \varphi'\left(\frac{|\theta\xi_1+\xi_2|^\alpha+(\theta\mu_1+\mu_2)^2}H\right)d\theta
\end{align*}
To estimate the contribution of $\mathcal{I}_H^{11}(v)$, we integrate by parts and use \eqref{estI.0} to obtain
\begin{equation}\label{ee.7}
\left|\int_0^{t_H}\mathcal{I}_H^{11}(v)dt\right| \lesssim \sum_{H_1\ll H} (H^{\frac 1\alpha-1}\vee H^{(-\frac 12)+}) H_1^{\frac 1\alpha+\frac 14} \|P_{H_1}u\|_{F_{H_1}} \|P_Hv\|_{F_H}^2.
\end{equation}
Estimates for $\mathcal{I}_H^{12}(v)$ and $\mathcal{I}_H^{13}(v)$ are easily obtained thanks to \eqref{estI.0}:
\begin{multline}\label{ee.8}
\left|\int_0^{t_H} (\mathcal{I}_H^{12}(v) + \mathcal{I}_H^{13}(v))dt\right| \\
\lesssim \sum_{H_1\ll H} (H_1^{1/\alpha}+ H_1H^{\frac 1\alpha-1}) (H^{\frac 1\alpha-1}\vee H^{(-\frac 12)+}) H_1^{1/4} \|P_{H_1}u\|_{F_{H_1}} \|P_{\sim H}v\|_{F_H} \|P_Hv\|_{F_H}.
\end{multline}
Combining estimates \eqref{ee.7}-\eqref{ee.8} we infer
\begin{equation}\label{ee.9}
\sum_{H\in\D\setminus\{1\}} \sup_{t_H\in [0,T]} H^{2\sigma} \left|\sum_{i=1}^3 \int_0^{t_H} \mathcal{I}_H^{1i}(v)dt\right|\lesssim \|u\|_{F^{s_\alpha+}(T)} \|v\|_{F^\sigma(T)}  \|v\|_{F^\sigma(T)}.
\end{equation}
Note that due to the lack of derivative on the lowest frequencies term $P_{\ll H}u$, Lemma \ref{estI} does not permit to control the term $\mathcal{I}_H^{14}(v)$ without loosing a $H^{\frac 2\alpha-\frac 32}$ factor. This is why we modify the energy by adding the cubic term in \eqref{def-EH}. Let us rewrite $\mathcal{L}_H(v)$ as $\sum_{i=1}^3\mathcal{L}_H^i(v)$ with
$$
\mathcal{L}_H^1(v) = -H^{-1} \int_{\R^2} \Pi_\eta(P_{\ll H}(-D^{\alpha}_x\partial_x+\partial_{xyy})u, v) P_Hv,
$$
$$
\mathcal{L}_H^2(v) = H^{-1} \int_{\R^2} \left(\Pi_\eta(P_{\ll H}u, D^\alpha_x\partial_x v) P_Hv + \Pi_\eta(P_{\ll H}u, v) P_HD^\alpha_x\partial_xv\right),
$$
and
$$
\mathcal{L}_H^3(v) = -H^{-1} \int_{\R^2} \left(\Pi_\eta(P_{\ll H}u, v_{xyy}) P_Hv + \Pi_\eta(P_{\ll H}u, v) P_Hv_{xyy}\right).
$$
After a few integrations by parts, we obtain thanks to \eqref{PiPart} that
\begin{multline*}
  \mathcal{L}_H^3(v) = -2H^{-1} \int_{\R^2} \Pi_\eta(P_{\ll H}u_y, v_y) P_Hv_x - H^{-1}\int_{\R^2} \Pi_\eta(P_{\ll H}u_{yy}, v)P_Hv_x \\
  + H^{-1}\int_{\R^2} \Pi_\eta(P_{\ll H}u_x, v_{yy}) P_Hv.
\end{multline*}
Choosing $\eta = -\frac{1}{c_1}\eta_3$, a cancellation occurs and we get
\begin{align*}
\mathcal{I}_H^{14}(v) + \mathcal{L}_H^3(v) &= H^{-1}\int_{\R^2} \Pi_\eta(P_{\ll H}u_x, v_{yy}) P_Hv - H^{-1}\int_{\R^2} \Pi_\eta(P_{\ll H}u_{yy}, v)P_Hv_x\\
&:= \mathcal{L}_H^{31}(v) + \mathcal{L}_H^{32}(v).
\end{align*}
In the case $u=v$, it suffices to set $\eta = -\frac{1}{2c_1}\eta$ to obtain $2\mathcal{I}_H^{14}(v) + \mathcal{L}_H^3(v) = \mathcal{L}_H^{31}(v) + \mathcal{L}_H^{32}(v)$.
 Now we use estimate \eqref{estI.0} to bound the terms $\mathcal{L}_H^{31}(v)$, $\mathcal{L}_H^{32}$ as well as $\mathcal{L}_H^1(v)$. We get that
 \begin{multline*}
   \left| \int_0^{t_H}(\mathcal{L}_H^{31}(v) + \mathcal{L}_H^{32}(v) + \mathcal{L}_H^1(v))dt\right| \\
   \lesssim \sum_{H_1\ll H} H^{-1}(H_1^{1/\alpha}H + H_1H^{1/\alpha} + H_1^{\frac 1\alpha+1}) (H^{\frac 1\alpha-1}\vee H^{(-\frac 12)+}) H_1^{1/4} \|P_{H_1}u\|_{F_{H_1}} \|P_{\sim H}v\|_{F_H} \|P_Hv\|_{F_H}.
 \end{multline*}
 It follows that
 \begin{equation}\label{ee.10}
\sum_{H\in\D\setminus\{1\}} \sup_{t_H\in [0,T]} H^{2\sigma} \left|\int_0^{t_H}(\mathcal{L}_H^{31}(v) + \mathcal{L}_H^{32}(v) + \mathcal{L}_H^1(v))dt\right|\lesssim \|u\|_{F^{s_\alpha+}(T)} \|v\|_{F^\sigma(T)}  \|v\|_{F^\sigma(T)}.
\end{equation}
Finally to deal with $\mathcal{L}_H^2(v)$, we integrate by parts and use that
$$
|\xi_1+\xi_2|^\alpha - |\xi_2|^\alpha = \alpha\xi_1 \int_0^1 |\theta\xi_1+\xi_2|^{\alpha-1} \sgn(\theta\xi_1+\xi_2)d\theta.
$$
We deduce
\begin{align*}
  \mathcal{L}_H^2(v) &= -H^{-1}\int_{\R^2} \left(D^\alpha_x\Pi_\eta(P_{\ll H}u, v_x) - \Pi_\eta(P_{\ll H}u, D^\alpha_xv_x)\right)P_Hv \\
  &\quad -H^{-1} \int_{\R^2} \Pi_\eta(P_{\ll H}u_x, v)P_HD^\alpha_xv\\
  &= -H^{-1/\alpha} \int_{\R^2} \Pi_{\eta\widetilde{\eta}}(P_{\ll H}u_x, v_x)P_Hv -H^{-1} \int_{\R^2} \Pi_\eta(P_{\ll H}u_x, v)P_HD^\alpha_xv,
\end{align*}
with
$$
\widetilde{\eta}(\zeta_1,\zeta_2) = -i\alpha H^{\frac 1\alpha-1}\int_0^1 |\theta\xi_1+\xi_2|^{\alpha-1} \sgn(\theta\xi_1+\xi_2)d\theta.
$$
Noticing that $\widetilde{\eta}$ is bounded on $\Delta_{\ll H}\times\Delta_{\sim H}$ we easily get from Lemma \ref{estI} that
 \begin{equation}\label{ee.11}
\sum_{H\in\D\setminus\{1\}} \sup_{t_H\in [0,T]} H^{2\sigma} \left|\int_0^{t_H}\mathcal{L}_H^{2}(v)dt\right|\lesssim \|u\|_{F^{s_\alpha+}(T)} \|v\|_{F^\sigma(T)}  \|v\|_{F^\sigma(T)}.
\end{equation}
Gathering \eqref{ee.4}-\eqref{ee.11} we conclude
 \begin{multline}\label{ee.12}
\sum_{H\in\D\setminus\{1\}} \sup_{t_H\in [0,T]} H^{2\sigma} \left|\int_0^{t_H} (\mathcal{I}_H(v)+\mathcal{L}_H(v))dt\right|\\
\lesssim (\|u\|_{F^{s_\alpha+}(T)} \|v\|_{F^\sigma(T)} + \|u\|_{F^{\sigma+s_\alpha+}(T)} \|v\|_{F^0(T)}) \|v\|_{F^\sigma(T)}.
\end{multline}

\noindent
\textit{Estimates for the fourth order terms.}
We get using \eqref{PiEst} and H\"{o}lder inequality that
\begin{align*}
  |\mathcal{N}_H^1(v)| &\lesssim \sum_{H_1\ll H} H^{-1} H_1^{\frac 1\alpha} H_1^{\frac 1{2\alpha}+\frac 14} \|P_{H_1}(u_1u_2)\|_{L^2} \|P_{\sim H}v\|_{L^2} \|P_Hv\|_{L^2}\\
  &\lesssim \sum_{H_1\ll H} H_1^{\frac 3{2\alpha}-\frac 34} \left(\|P_{\ll H_1}u_1\|_{L^4} \|P_{\sim H_1}u_2\|_{L^4} + \|P_{\gtrsim H_1}u_1\|_{L^4}\|u_2\|_{L^4}\right) \|P_{\sim H}v\|_{L^2} \|P_Hv\|_{L^2}.
\end{align*}
Noticing that
$$
\sum_{H_1\in\D} H_1^{\frac 3{2\alpha}-\frac 34} \|P_{H_1}u_i\|_{L^\infty_TL^4_{xy}} \lesssim \sum_{H_1\in\D} H_1^{\frac 7{4\alpha}-\frac 58} \|P_{H_1}u_i\|_{L^\infty_TL^2_{xy}} \lesssim \|u_i\|_{B^{s_\alpha+}(T)},
$$
we deduce
\begin{equation}\label{ee.13}
  \sum_{H\in\D\setminus\{1\}} \sup_{t_H\in[0,T]} H^{2\sigma} \left|\int_0^{t_H} \mathcal{N}_H^1(v)dt\right| \lesssim \|u_1\|_{B^{s_\alpha+}(T)} \|u_2\|_{B^{s_\alpha+}(T)} \|v\|_{B^\sigma(T)}^2.
\end{equation}
Finally we evaluate the contribution of $\mathcal{N}_H^3(v)$ since by \eqref{symPi}, the term $\mathcal{N}_H^2(v)$ could be treated similarly. We perform a dyadic decomposition on $u$ and $v$ to obtain
\begin{align*}
  \mathcal{N}_H^3(v) &= c_1H^{-1} \int_{\R^2} \Pi_\eta(P_{\ll H}u, v) P_H\partial_x(P_{\ll H}uv) + c_1H^{-1}\int_{\R^2} \Pi_\eta(P_{\ll H}u,v)P_H\partial_x(uP_{\ll H}v)\\
  &\quad + c_1H^{-1} \sum_{H_1\gtrsim H} \int_{\R^2} \Pi_\eta(P_{\ll H}u,v) P_H\partial_x(P_{H_1}u P_{\sim H_1}v)\\
  &:= \mathcal{N}_H^{31}(v) + \mathcal{N}_H^{32}(v) + \mathcal{N}_H^{33}(v).
\end{align*}
By using estimate \eqref{PiEst} we infer that
\begin{align*}
  |\mathcal{N}_H^{31}(v)| &\lesssim \sum_{H_1,H_2\ll H} H^{\frac 1\alpha-1} H_1^{\frac 1{2\alpha}+\frac 14} \|P_{H_1}u\|_{L^2} \|P_{\sim H}v\|_{L^2} \|P_{H_2}u P_{\sim H}v\|_{L^2}\\
  &\lesssim \sum_{H_1,H_2\ll H} H_1^{\frac 1\alpha-\frac 14}\|P_{H_1}u\|_{L^2} H_2^{\frac 1\alpha-\frac 14}\|P_{H_2}u\|_{L^2} \|P_{\sim H}v\|_{L^2}^2,
\end{align*}
from which we deduce
\begin{equation}\label{ee.13}
  \sum_{H\in\D\setminus\{1\}} \sup_{t_H\in[0,T]} H^{2\sigma} \left|\int_0^{t_H} \mathcal{N}_H^{31}(v)dt\right| \lesssim \|u\|_{B^{s_\alpha+}(T)}^2  \|v\|_{B^\sigma(T)}^2.
\end{equation}
Then, observe that $\mathcal{N}_H^{32}(v) = \mathcal{N}_H^{31}(v)$ in the case $u=v$. Arguing as above we get for $u\neq v$ that
$$
|\mathcal{N}_H^{32}(v)|\lesssim \sum_{H_1,H_2\ll H} H^{\frac 1\alpha-1} H_1^{\frac 1{2\alpha}+\frac 14} H_2^{\frac 1{2\alpha}+\frac 14} \|P_{H_1}u\|_{L^2} \|P_{\sim H}u\|_{L^2} \|P_{\sim H}v\|_{L^2} \|P_{H_2}v\|_{L^2}.
$$
It follows that
\begin{equation}\label{ee.14}
  \sum_{H\in\D\setminus\{1\}} \sup_{t_H\in[0,T]} \left|\int_0^{t_H} \mathcal{N}_H^{32}(v)dt\right| \lesssim \|u\|_{B^{s_\alpha+}(T)}^2  \|v\|_{B^0(T)}^2,
\end{equation}
and at the $E^s$-level
\begin{equation}\label{ee.15}
  \sum_{H\in\D\setminus\{1\}} \sup_{t_H\in[0,T]} H^{2s} \left|\int_0^{t_H} \mathcal{N}_H^{32}(v)dt\right| \lesssim \|u\|_{B^{s}(T)}^2  \|v\|_{B^s(T)}^2.
\end{equation}
Finally we use similar arguments to bound $\mathcal{N}_H^{33}(v)$ and we obtain
\begin{equation}\label{ee.16}
  \sum_{H\in\D\setminus\{1\}} \sup_{t_H\in[0,T]} H^{2\sigma} \left|\int_0^{t_H} \mathcal{N}_H^{33}(v)dt\right| \lesssim \|u\|_{B^{s_\alpha+}(T)}^2  \|v\|_{B^\sigma(T)}^2.
\end{equation}
Gathering \eqref{ee.13}-\eqref{ee.16} we deduce
$$
  \sum_{H\in\D\setminus\{1\}} \sup_{t_H\in[0,T]} \left|\int_0^{t_H} \mathcal{N}_H(v)dt\right| \lesssim (\|u_1\|_{B^{s_\alpha+}(T)}\|u_2\|_{B^{s_\alpha+}(T)} + \|u\|_{B^{s_\alpha+}(T)}^2)  \|v\|_{B^0(T)}^2,
$$
and
$$
  \sum_{H\in\D\setminus\{1\}} \sup_{t_H\in[0,T]} H^{2s}\left|\int_0^{t_H} \mathcal{N}_H(v)dt\right| \lesssim (\|u_1\|_{B^{s_\alpha+}(T)}\|u_2\|_{B^{s_\alpha+}(T)} + \|u\|_{B^{s}(T)}^2)  \|v\|_{B^s(T)}^2,
$$
which combined with \eqref{ee.2}-\eqref{ee.3} and \eqref{ee.12} concludes the proof of Proposition \ref{ee}.
\end{proof}

\section{Proof of Theorem \ref{theo-main}. }\label{sec-main}

The proof of Theorem \ref{theo-main} closely follows the proof of existence and uniqueness given in \cite{KP}.  We start with a well-posedness result for smooth initial data $u_0$ in $E^{\infty}=H^{\infty}(\R^2)$. This result can be easily obtained with a parabolic regularization of (\ref{bozk}) by adding an extra term $-\varepsilon \Delta u$ and going to the limit as $\varepsilon \rightarrow 0$. We refer the reader to \cite{Io} for more details.

\begin{theorem}\label{theo-Hinfini}
  Assume that $u_0\in E^\infty$. Then there exist a positive time $T$ and a unique solution  $u\in C([-T,T];E^\infty)$ of (\ref{bozk}) with initial data $u(0,.)=u_0(.)$.
  Moreover $T=T(\|u_0\|_{E^3})$ is a nonincreasing function of $\|u_0\|_{E^3}$ and the flow-map is continuous.
\end{theorem}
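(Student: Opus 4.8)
The plan is to prove Theorem~\ref{theo-Hinfini} by the classical parabolic regularization scheme sketched in \cite{Io}. For $\varepsilon\in(0,1]$ I would first consider the regularized Cauchy problem
\begin{equation*}
\partial_tu^\varepsilon - D_x^\alpha\partial_xu^\varepsilon + \partial_{xyy}u^\varepsilon - \varepsilon\Delta u^\varepsilon = u^\varepsilon\partial_xu^\varepsilon,\qquad u^\varepsilon(0,\cdot)=u_0,
\end{equation*}
whose linear propagator has Fourier symbol $e^{it\omega(\zeta)-\varepsilon t|\zeta|^2}$. Because of the good sign of $-\varepsilon\Delta$, this generates a smoothing semigroup $G_\varepsilon(t)$ with $\|\partial^kG_\varepsilon(t)\phi\|_{L^2}\lesssim(\varepsilon t)^{-k/2}\|\phi\|_{L^2}$ for $t>0$. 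Writing the problem in Duhamel form $u^\varepsilon(t)=G_\varepsilon(t)u_0+\int_0^tG_\varepsilon(t-t')\,u^\varepsilon\partial_xu^\varepsilon(t')\,dt'$ and using the parabolic gain of one derivative to absorb the $\partial_x$ in the nonlinearity against the integrable singularity $(t-t')^{-1/2}$, a standard contraction argument in $C([0,T_\varepsilon];E^s)$ (for any fixed $s\ge 3$) produces, for each $\varepsilon$, a unique maximal solution, which persistence of regularity upgrades to $u^\varepsilon\in C([0,T_\varepsilon];E^\infty)$. At this stage $T_\varepsilon$ may a priori degenerate as $\varepsilon\to0$.

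The core of the argument is then to derive energy estimates that are \emph{uniform} in $\varepsilon$ and yield an existence time controlled by a fixed low norm. Applying the multiplier $\cro{h}^{s}$ defining the $E^s$-norm, pairing with $\cro{h}^{s}u^\varepsilon$ in $L^2$, and using $-\varepsilon\int\cro{h}^{2s}u^\varepsilon\,\Delta u^\varepsilon\ge0$ to discard the dissipative term, the remaining task is to bound $\int\cro{h}^{s}(u^\varepsilon\partial_xu^\varepsilon)\,\cro{h}^{s}u^\varepsilon$. This I would treat by a Kato--Ponce commutator estimate together with the symmetrization $\int u^\varepsilon\partial_x(\cro{h}^{s}u^\varepsilon)^2=-\tfrac12\int(\partial_xu^\varepsilon)(\cro{h}^{s}u^\varepsilon)^2$, which transfers the top-order $x$-derivative onto a low-frequency factor. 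Controlling $\|\partial_xu^\varepsilon\|_{L^\infty}$ and $\|u^\varepsilon\|_{L^\infty}$ by $\|u^\varepsilon\|_{E^3}$ via the anisotropic embedding $E^3\hookrightarrow W^{1,\infty}(\R^2)$ (which holds for all $\alpha\ge1$), one reaches
\begin{equation*}
\frac{d}{dt}\|u^\varepsilon(t)\|_{E^s}^2\lesssim\|u^\varepsilon(t)\|_{E^3}\,\|u^\varepsilon(t)\|_{E^s}^2
\end{equation*}
uniformly in $\varepsilon$. Taking $s=3$ and integrating gives an a priori bound on $[0,T]$ with $T=T(\|u_0\|_{E^3})$ nonincreasing in $\|u_0\|_{E^3}$ and independent of $\varepsilon$; reinserting this into the inequality for general $s$ propagates the $E^s$ bound on the same interval.

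With these uniform bounds, $(u^\varepsilon)$ is bounded in $L^\infty([0,T];E^s)$ for every $s$, while the equation forces $(\partial_tu^\varepsilon)$ to be bounded in $L^\infty([0,T];E^{s-2})$ (the term $\varepsilon\Delta u^\varepsilon$ being harmless since $\varepsilon\le1$); an Aubin--Lions--type compactness argument then extracts a subsequence converging in $C([0,T];E^{s'}_{\mathrm{loc}})$ for $s'<s$, which suffices to pass to the limit in the quadratic nonlinearity and yields a solution $u\in L^\infty([0,T];E^\infty)$ of \eqref{bozk}. Uniqueness and continuous dependence both follow from an $L^2$ estimate on the difference $w=u_1-u_2$, which solves a linear equation with coefficients controlled by $\|u_i\|_{E^3}$, giving $\|w(t)\|_{E^0}\lesssim\|w(0)\|_{E^0}$ by Gronwall; interpolating this $E^0$-Lipschitz bound against the uniform $E^s$ bounds (or invoking the Bona--Smith argument \cite{BS}) upgrades convergence to every $E^s$, delivering $u\in C([0,T];E^\infty)$ and continuity of the flow-map. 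The hard part will be the uniform-in-$\varepsilon$ estimate of the second step: since the nonlinearity carries an $x$-derivative while $E^s$ weights $x$ and $y$ anisotropically, one cannot symmetrize as freely as in the isotropic KdV setting, and one must keep the derivative loss confined to the lower-order factor so that the growth is governed by $\|u^\varepsilon\|_{E^3}$ alone, with a constant independent of $\varepsilon$.
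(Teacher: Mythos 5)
Your overall scheme (parabolic regularization by $-\eps\Delta$, contraction for fixed $\eps$, uniform energy estimates, compactness, Gronwall in $L^2$ for uniqueness, Bona--Smith for continuity) is exactly the route the paper intends -- it only sketches this theorem, citing \cite{Io}. But the step on which everything rests, the uniform-in-$\eps$ bound $\frac{d}{dt}\|u^\eps\|_{E^s}^2\lesssim\|u^\eps\|_{E^3}\|u^\eps\|_{E^s}^2$ obtained by ``Kato--Ponce commutator plus symmetrization'' with the anisotropic multiplier $\cro{h}^s$, is a genuine gap for $1\le\alpha<2$: it is not merely ``the hard part'', it fails as stated. In the low-high paraproduct regime $|\zeta_1|\ll|\zeta_2|$, the mean value theorem applied to the symbol gives
\begin{equation*}
\left|\cro{h(\zeta_1+\zeta_2)}^s-\cro{h(\zeta_2)}^s\right|\ \lesssim\ \cro{h(\zeta_2)}^{s-1}\left(|\xi_2|^{\alpha-1}|\xi_1|+|\mu_2|\,|\mu_1|\right),
\end{equation*}
and this is sharp. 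The first piece is the good one: together with the $\xi_2$ coming from $\partial_x$ it leaves the weight $\cro{h(\zeta_2)}^{s-1}|\xi_2|^{\alpha}\le\cro{h(\zeta_2)}^{s}$ on the high factor and costs one $x$-derivative on the low factor, which $\|u\|_{E^3}$ controls. The second piece costs a $y$-derivative on the low factor and leaves the weight $\cro{h(\zeta_2)}^{s-1}|\mu_2|\,|\xi_2|$ on the high factor; near the critical region $\mu_2^2\sim|\xi_2|^\alpha$ this weight is of size $\cro{h(\zeta_2)}^{s-\frac12+\frac1\alpha}$, which for $\alpha<2$ exceeds $\cro{h}^{s}$ by $\cro{h}^{\frac1\alpha-\frac12}$. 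The best closable bound is therefore $\frac{d}{dt}\|u^\eps\|_{E^s}^2\lesssim\|u^\eps\|_{E^3}\,\|u^\eps\|_{E^s}\,\|u^\eps\|_{E^{s+\frac1\alpha-\frac12}}$, and Gronwall does not apply; integrating by parts in $y$ only moves the same excess onto the other factor. This is precisely the obstruction the paper announces in the introduction (``we cannot put the $x$-derivative on the lower frequency term via commutators and integrations by parts without loosing a $y$-derivative'') and is the reason Section \ref{sec-energy} works with the modified energy \eqref{def-EH}: the uncontrollable term $\mathcal{I}_H^{14}(v)$ there is exactly the bad commutator term above, and it is cancelled by the cubic correction, not estimated.

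The repair for this particular theorem is to abandon the anisotropic multiplier altogether: since $E^\infty=H^\infty(\R^2)$, run the uniform energy estimates in the isotropic spaces $H^k(\R^2)$, $k\ge 3$ integer. There $[\partial^\beta,u]\partial_xu$ is a finite Leibniz sum of terms $\partial^{\gamma}u\,\partial^{\beta-\gamma}\partial_xu$ with $|\gamma|\ge1$ and $|\beta-\gamma|+1\le k$, each bounded by $\|\nabla u\|_{L^\infty}\|u\|_{H^k}$ via Gagliardo--Nirenberg; the operator $-D_x^\alpha\partial_x+\partial_{xyy}$ is skew-adjoint and commutes with $\partial^\beta$, and the dissipative term has the good sign, so $\frac{d}{dt}\|u^\eps\|_{H^k}^2\lesssim\|u^\eps\|_{H^3}\|u^\eps\|_{H^k}^2$ uniformly in $\eps$. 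Taking $k=3$ gives a uniform time $T=T(\|u_0\|_{H^3})$, nonincreasing, and the bound then propagates to every $k$ on the same interval; since $\cro{\xi^2+\mu^2}^{3/2}\lesssim\cro{|\xi|^\alpha+\mu^2}^{3}$ for $\alpha\ge1$, one has $\|u_0\|_{H^3}\lesssim\|u_0\|_{E^3}$ and the stated dependence $T=T(\|u_0\|_{E^3})$ follows. (Alternatively one can work in anisotropic spaces with dominant $x$-regularity as in \cite{CP}, but not in the $E^s$ scale, where the $y$-regularity dominates when $\alpha<2$.) With the energy step done this way, the remaining elements of your proposal -- fixed-$\eps$ contraction, compactness, $L^2$-Lipschitz bound for uniqueness, Bona--Smith upgrade -- go through as you describe.
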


\subsection{A priori estimates for $E^\infty$ solutions}
\begin{theorem}\label{est-theo-Hinfini}
  Assume that $s>s_\alpha$. For any $M>0$ there exists $T=T(M)>0$ such that, for all initial data $u_0\in E^\infty$ satisfying $\|u_0\|_{E^s}\leq M$, the smooth solution $u$ given by Theorem \ref{theo-Hinfini} is defined on $[-T,T]$ and moreover
  \begin{equation}\label{est1-Hinfini}
    u\in C([-T,T];E^{\infty}) \quad \mbox{and} \quad \|u\|_{L_T^{\infty} E^s}\lesssim \|u_0\|_{E^s}\;.
  \end{equation}
\end{theorem}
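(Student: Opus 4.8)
The plan is to combine the linear estimate \eqref{FBN}, the bilinear estimate \eqref{stbe.1}, the energy estimate \eqref{ee.000} and the coercivity estimate \eqref{coer.0} in a continuity-in-time bootstrap. The crucial preliminary remark is that \eqref{coer.0} is effective only when $\|u\|_{B^0(T)}$ is small, and this norm does \emph{not} tend to $0$ as $T\to 0^+$ (it stays comparable to $\|u_0\|_{E^0}$). I would therefore first reduce to small data using the scaling symmetry of \eqref{bozk}: $u$ solves \eqref{bozk} if and only if $u_\lambda(t,x,y)=\lambda^\alpha u(\lambda^{\alpha+1}t,\lambda x,\lambda^{\alpha/2}y)$ does. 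Since $s>s_\alpha$ lies strictly above the scaling-critical exponent $\tfrac 1{2\alpha}-\tfrac 34$, a standard computation shows $\|u_{0,\lambda}\|_{E^s}\to 0$ as $\lambda\to 0$, so for $\lambda=\lambda(M)$ small enough we may assume $\|u_{0,\lambda}\|_{E^s}\le\eps_0$, hence also $\|u_{0,\lambda}\|_{E^0}\le\eps_0$ since $\|\cdot\|_{E^0}\le\|\cdot\|_{E^s}$, where $\eps_0$ is a fixed small constant dictated by the implicit constants. Establishing the bound on the fixed interval $[-1,1]$ for $u_\lambda$ then yields the theorem on $[-T,T]$ with $T=T(M)=\lambda^{\alpha+1}$ after undoing the scaling.

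For the rescaled (still smooth) solution, set $G(T)=\|u\|_{F^s(T)}+\|u\|_{B^s(T)}$ and write $\eps=\|u_0\|_{E^s}\le\eps_0$. Using $s_\alpha+\le s$ (so that $\|\cdot\|_{F^{s_\alpha+}}\lesssim\|\cdot\|_{F^s}$ and $\|\cdot\|_{B^{s_\alpha+}}\lesssim\|\cdot\|_{B^s}$) together with the trivial bound $\|u\|_{B^0(T)}\le\|u\|_{B^s(T)}$, the main estimates collapse into the closed system
\begin{align*}
  \|u\|_{F^s(T)} &\lesssim \|u\|_{B^s(T)}+\|u\|_{F^s(T)}^2,\\
  E^s_T(u) &\lesssim \eps^2+\|u\|_{F^s(T)}^3+\|u\|_{B^s(T)}^4,\\
  \|u\|_{B^s(T)}^2 &\lesssim E^s_T(u)+\|u\|_{B^s(T)}^3,
\end{align*}
obtained respectively from \eqref{FBN} applied to \eqref{bozk}, whose nonlinearity is $f=uu_x=\tfrac12\partial_x(u^2)$, combined with \eqref{stbe.1} (licit since $s>s_\alpha\ge 1/4$); from \eqref{ee.000} in the case $u=v$; and from \eqref{coer.0}. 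Running the bootstrap under the hypothesis $G(T)\le 2C_0\eps$ with $\eps_0$ small, the quadratic term in the first line and the cubic and quartic terms in the last two are all absorbed, yielding $G(T)\le C_0\eps$ and thus strictly improving the hypothesis. Since $G$ is continuous and nondecreasing in $T$ on the interval of existence, with $\limsup_{T\to0^+}G(T)\lesssim\eps$, the bound propagates up to $T=1$.

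With $\|u\|_{F^s(T)}\lesssim\eps$ secured, Lemma \ref{LinftyEF} gives $\|u\|_{L^\infty_TE^s}\lesssim\|u\|_{F^s(T)}\lesssim\|u_0\|_{E^s}$, which is the desired estimate. It remains to verify that the smooth solution persists on all of $[-1,1]$ and that $u\in C([-1,1];E^\infty)$. For this I would rerun the identical energy/coercivity scheme at each level $\sigma\ge s$: estimate \eqref{ee.000} is \emph{tame}, in the sense that only the low-regularity factor $\|u\|_{F^{s_\alpha+}(T)}\le\|u\|_{F^s(T)}\lesssim\eps$ multiplies the top-order quantities $\|u\|_{F^\sigma(T)}^2$ and $\|u\|_{B^\sigma(T)}^2$, so the level-$\sigma$ system closes to $\|u\|_{L^\infty_TE^\sigma}\lesssim\|u_0\|_{E^\sigma}$. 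In particular $\|u(t)\|_{E^3}$ remains bounded, which rules out the blow-up alternative of Theorem \ref{theo-Hinfini} and licenses the continuation of the solution to the whole interval.

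The two delicate points are the following. First, the reliance on scaling is essential and not merely convenient: the cubic correction in the modified energy \eqref{def-EH} can be absorbed only when $\|u\|_{B^0(T)}$ is small, and because this norm is insensitive to the length of the time interval, the usual short-time smallness is unavailable, so scaling is precisely what supplies it. Second, the continuity-in-$T$ and continuation steps must be handled with care: one needs the short-time norms $\|u\|_{F^s(T)}$ and $\|u\|_{B^s(T)}$ to vary continuously in $T$ with the correct limit as $T\to0^+$, and the tame propagation of $E^\infty$ regularity to justify the extension to $[-T,T]$. Both are standard in the framework of \cite{IKT}-\cite{KP}, but they rely on the smooth-solution input furnished by Theorem \ref{theo-Hinfini}.
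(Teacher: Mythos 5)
Your overall strategy coincides with the paper's: rescale to small data (your scaling is the paper's with $\lambda$ replaced by $\lambda^{1/\alpha}$), close a system built from \eqref{FBN}, \eqref{stbe.1}, \eqref{coer.0} and \eqref{ee.000} by a continuity-in-$T$ argument on a fixed time interval, then propagate $E^\sigma$ regularity tamely for $\sigma\ge s$ (in particular $\sigma=3$) to invoke Theorem \ref{theo-Hinfini} and extend the smooth solution. Your remark that smallness of $\|u\|_{B^0(T)}$ cannot be produced by shrinking $T$, so that scaling is essential, is exactly the point of the paper's reduction, and your algebraic closing of the three inequalities is correct.

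There is, however, one genuine gap: your bootstrap quantity $G(T)=\|u\|_{F^s(T)}+\|u\|_{B^s(T)}$ contains the restriction norm $\|u\|_{F^s(T)}$, defined as an infimum over extensions, and you assert without proof that $T\mapsto G(T)$ is continuous, attributing this to the framework of \cite{IKT}--\cite{KP}. That is precisely the property those papers do \emph{not} provide: Lemma \ref{lemmaKPcontrole}, imported from \cite{KP}, gives monotonicity, continuity and control of the $T\to 0$ limit only for $\|u\|_{B^s(T')}$ and $\|\partial_x(u^2)\|_{\N^s(T')}$, i.e. for the quantity $\Lambda^s_{T'}(u)$ of \eqref{defLambdasT}. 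Continuity in $T$ of an infimum-over-extensions norm such as $F^s(T)$ is not straightforward: a near-optimal extension for $[-T_0,T_0]$ cannot be cheaply modified into one for a slightly larger interval, and estimating $F^s(T)$ through \eqref{FBN} only yields an upper bound with a multiplicative constant, which is not continuity. Without continuity of $G$, the open/closed argument propagating $G(T)\le 2C_0\eps$ up to $T=1$ does not close. The repair is exactly the paper's choice of bootstrap quantity: run the continuity argument on $\Lambda^s_T(u)=\max\left(\|u\|_{B^s(T)},\|\partial_x(u^2)\|_{\N^s(T)}\right)$, for which \eqref{FBN} gives $\|u\|_{F^s(T)}\lesssim\Lambda^s_T(u)$, \eqref{stbe.1} gives $\|\partial_x(u^2)\|_{\N^s(T)}\lesssim\|u\|_{F^s(T)}\|u\|_{F^{(1/4)+}(T)}$, and \eqref{coer.0}, \eqref{ee.000} control the $B$ component; your absorption argument then goes through verbatim, and the $F^s$ bound, hence $\|u\|_{L^\infty_TE^s}\lesssim\|u_0\|_{E^s}$ via Lemma \ref{LinftyEF}, is recovered a posteriori. (Incidentally, your formulation $\limsup_{T\to 0^+}$ of the small-time input is the correct one, since the $B^s(T)$ component tends to a quantity comparable to $\|u_0\|_{E^s}$, not to $0$.) The same substitution should be made in your higher-regularity step, although there no continuity argument is actually needed: once the level-$s$ factors are known to be $O(\eps)$, every level-$\sigma$ term can be absorbed directly.
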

To obtain Theorem \ref{est-theo-Hinfini} we will need the following result proved in \cite{KP}.

\begin{lemma}\label{lemmaKPcontrole}
Assume that $s\geq 0$, $T>0$ and $u \in C([-T,T];E^\infty)$. Consider for $0\leq T'\leq T$
\begin{equation}\label{defLambdasT}
\Lambda ^s_{T'}(u)=\max{\left(\|u\|_{B^s_{T'}},\|\partial_x(u^2)\|_{N^s_{T'}} \right)}\;.
\end{equation}
The map $T'\mapsto \Lambda ^s_{T'}$ is nondecreasing,  continuous on $[0,T)$ and moreover
\begin{equation}\label{limLambdasT}
\lim_{T'\rightarrow 0}\Lambda ^s_{T'}(u)=0\;.
\end{equation}
\end{lemma}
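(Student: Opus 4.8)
The plan is to establish the three asserted properties separately for the two quantities entering $\Lambda^s_{T'}(u)$, namely $T'\mapsto\|u\|_{B^s(T')}$ and $T'\mapsto\|\partial_x(u^2)\|_{\N^s(T')}$; the corresponding statements for their maximum then follow at once. \emph{Monotonicity} is immediate from the definitions. For the energy norm $B^s(T')$, increasing $T'$ only enlarges the intervals $[-T',T']$ over which the suprema $\sup_{t_H\in[-T',T']}\|P_Hu(t_H)\|_{L^2}$ are taken. For the dual norm $\N^s(T')$ it follows from the infimum structure of the restriction space: if $T_1'\le T_2'$, any extension that coincides with $\partial_x(u^2)$ on $[-T_2',T_2']$ also coincides with it on $[-T_1',T_1']$, so the set of admissible extensions shrinks as $T'$ grows and the infimum can only increase.

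For the limit \eqref{limLambdasT} the substantive point is the vanishing of the nonlinear term, so I would show $\|\partial_x(u^2)\|_{\N^s(T')}\to0$ for the smooth function $g=\partial_x(u^2)\in C([-T,T];E^\infty)$. Since $\N^s(T')$ is an infimum, it suffices to produce one cheap admissible extension, and I would take the sharp truncation $1_{[-T',T']}g$ and estimate $H^{2s}\|P_H(1_{[-T',T']}g)\|_{\N_H}^2$ frequency by frequency, using the weight $(\tau-\omega(\zeta)+iH^\beta)^{-1}$ from \eqref{def.NH}, which is of size $H^{-\beta}$ on the band $\{|\tau-\omega(\zeta)|\lesssim H^\beta\}$, together with Lemma \ref{techX} to absorb the sharp cutoff. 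The gain comes from splitting the frequencies at $H\sim(T')^{-1/\beta}$: for $H\lesssim(T')^{-1/\beta}$ the physical window $H^{-\beta}$ exceeds the time support $2T'$, and bounding $\|\varphi_1(H^\beta(\cdot-t_H))\,1_{[-T',T']}g\|_{L^2_{txy}}$ by Cauchy--Schwarz on a set of time measure $2T'$ produces a genuine factor $(T')^{\nu}$ with $\nu>0$; for $H\gtrsim(T')^{-1/\beta}$ one discards the time gain and uses instead that $g\in E^{s'}$ for every $s'>s$, so the remaining sum is a tail of a convergent series with cutoff $(T')^{-1/\beta}\to+\infty$, hence also tends to $0$. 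Balancing the two ranges yields the claim.

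\emph{Continuity} on $[0,T)$ is then handled as follows. For $\|u\|_{B^s(T')}$ it is elementary: each $t\mapsto\|P_Hu(t)\|_{L^2}$ is continuous since $u\in C([-T,T];E^\infty)$, so $T'\mapsto\sup_{[-T',T']}\|P_Hu\|_{L^2}$ is continuous, and the series defining $\|u\|_{B^s(T')}^2$ converges uniformly in $T'$ because the $E^{s'}$-norm of $u$ (with $s'>s$) dominates its tail, whence continuity of the partial sums passes to the limit. For $\N^s(T')$, which again cannot be controlled by restricting a near-optimal extension, I would use monotonicity to reduce matters to bounding the increment between $T'$ and $T'+\eps$: starting from a near-optimal extension of $\partial_x(u^2)$ for $T'$, correct it only on the thin slab $\{T'<|t|\le T'+\eps\}$ so as to make it admissible for $T'+\eps$, and estimate the $\N^s$-cost of this correction exactly as in the limit step, with the slab of thickness $O(\eps)$ playing the role of $[-T',T']$; it therefore tends to $0$ with $\eps$. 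This uniform-in-$H$ control of functions supported in a short time slab, which rests on the short-time smoothing factor $H^{-\beta}$ and on Lemma \ref{techX}, is the common engine of both the limit and the continuity of $\N^s(T')$, and is the step I expect to require the most care.
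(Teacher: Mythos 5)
First, for calibration: the paper contains no proof of this lemma at all --- it is quoted as ``proved in \cite{KP}'' --- so you are being compared with that reference, where the corresponding statement concerns only the map $T'\mapsto\|\partial_x(u^2)\|_{\N^s(T')}$. Within your proposal, the monotonicity argument, the continuity argument for the $B^s$ component, and the proof that $\|\partial_x(u^2)\|_{\N^s(T')}\to 0$ are sound; the last one is the standard argument (sharp-cutoff extension, the weight $(\tau-\omega(\zeta)+iH^{\beta})^{-1}$ in \eqref{def.NH} turning the $X_H$ sum into an $H^{-\beta/2}\,L^2_{txy}$ bound by Cauchy--Schwarz in the modulation, a factor $(T')^{1/2}$ from the thin time support for $H\lesssim (T')^{-1/\beta}$, and the smoothness of $u$ for the high-frequency tail). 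Two points, however, do not hold up.

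First, the limit \eqref{limLambdasT} is false as stated for the $B^s$ component, and your proposal silently discards it: you assert that ``the substantive point is the vanishing of the nonlinear term'' and never return to $\|u\|_{B^s(T')}$. But directly from the definition,
\begin{equation*}
\|u\|_{B^s(T')}^2\;\ge\;\|P_1u(0)\|_{L^2}^2+\sum_{H\in\D\setminus\{1\}}H^{2s}\|P_Hu(0)\|_{L^2}^2\;\gtrsim\;\|u(0)\|_{E^s}^2
\end{equation*}
for every $T'>0$, so $\Lambda^s_{T'}(u)$ cannot tend to $0$ unless $u_0=0$. What is true (and is all that the bootstrap in the proof of Theorem \ref{est-theo-Hinfini} needs, since there $\|u_0\|_{E^s}\le\varepsilon$) is $\lim_{T'\to 0}\Lambda^s_{T'}(u)\lesssim\|u_0\|_{E^s}$; a complete write-up must either make this correction or restrict \eqref{limLambdasT} to the $\N^s$ part, as in \cite{KP}.

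Second, and more seriously, your continuity argument for $T'\mapsto\|\partial_x(u^2)\|_{\N^s(T')}$ has a genuine gap. Writing $g=\partial_x(u^2)$, your correction on the slab is $h=(g-\widetilde{g})1_{\{T'<|t|\le T'+\eps\}}$, where $\widetilde{g}$ is a near-optimal extension for $T'$. The piece $g\,1_{\{T'<|t|\le T'+\eps\}}$ can indeed be estimated ``exactly as in the limit step'', because that estimate uses the smoothness of $g$ through $\sup_t\|P_Hg(t)\|_{L^2_{xy}}$. The piece $\widetilde{g}\,1_{\{T'<|t|\le T'+\eps\}}$ cannot: $\widetilde{g}$ is only known to lie in $\N^s$, and $\N_H$ does not control $L^2_{txy}$ (a function at modulation $L\gg H^{\beta}$ satisfies $\|\phi\|_{\N_H}\sim L^{-1/2}\|\phi\|_{L^2}$), so ``supported in a time set of measure $O(\eps)$'' carries no smallness for it; nothing prevents a near-optimal extension from concentrating, with huge modulation, precisely in the slab just outside $[-T',T']$, in which case $\|\widetilde{g}\,1_{\{T'<|t|\le T'+\eps\}}\|_{\N^s}$ stays comparable to $\|\widetilde{g}\|_{\N^s}$ and your increment bound collapses. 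Your argument can be repaired, but not by the quantitative slab estimate you invoke: for right-continuity one needs a qualitative dominated-convergence property for the \emph{fixed} rough function $g-\widetilde{g}$ (proved by truncating in $H$ and in modulation, using the boundedness of sharp time cutoffs from Lemma \ref{techX}, and applying $L^2$-dominated convergence to the remaining compact block), and for left-continuity --- where the near-optimal extensions vary with $\eps$ and no fixed function is available --- one needs a separate mechanism, e.g.\ weak-$\ast$ compactness together with the Fatou property of the $X_H$/$\N_H$ norms, or one must settle for quasi-continuity (jumps bounded by a universal constant) and adapt the bootstrap accordingly. You correctly flag this as ``the step I expect to require the most care,'' but as proposed it is not closed.
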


{\it Proof of Theorem \ref{est-theo-Hinfini}}
First note that we can  always assume that the initial data $u_0$ have a small $E^s$-norm. Indeed, if $u(t,x,y)$ is a solution of (\ref{bozk}) then
$u_\lambda (t,x,y)=\lambda u(\lambda^{1+1/\alpha}t,\lambda^{1/\alpha}x,\lambda ^{1/2}y)$ is  a solution of (\ref{bozk}) on the time interval $[0,\lambda ^{-(1+1/\alpha)}T]$, with initial data
$u_\lambda (0,x,y)=\lambda u(\lambda^{1/\alpha}x,\lambda ^{1/2}y)$.
On the other hand, one can easily check that
\begin{equation}\label{dilatation}
\| u_{\lambda } (0,x,y) \|_{E^s} \lesssim \lambda ^{\frac{3}{4}-\frac{1}{2\alpha}} (1+\lambda ^s)\|u(0,x,y)\|_{E^s} \;,
\end{equation}
and then, choosing $\lambda \sim \varepsilon^{(\frac{3}{4}-\frac{1}{2\alpha})^{-1}}\|u_0\|_{E^s}^{(\frac{3}{4}-\frac{1}{2\alpha})^{-1}}$ we see that $u_{\lambda}(0,.)$ belongs to
$B^s(\varepsilon )$ the ball of $E^s$ centered at the origin with radius $\varepsilon$. Hence it is enough to prove that if $u_{\lambda} (0,.)\in B^s(\varepsilon)$,  Theorem \ref{est-theo-Hinfini} holds with $T=1$. This will prove the result with $T(\|u_0\|_{E^s})\sim \|u_0\|_{E^s}^{-(1+1/\alpha)(3/4-1/(2\alpha))}$.

In view of those considerations, we take now $u_0 \in E^{\infty}\cap B^s(\varepsilon )$ and let $u\in C([-T,T];E^{\infty})$ be the solution of (\ref{bozk}) given by Theorem \ref{theo-Hinfini} (with $0\leq T\leq 1$). Then gathering the linear estimate (\ref{FBN}), Proposition \ref{stbe}, (\ref{coer.0}) and (\ref{ee.000}) we get
\begin{equation}\label{emainth.0}
\Lambda^{\beta}_T(u)^2\lesssim (1+\|u_0\|_{E^0})\|u_0\|_{E^\beta}^2+(\Lambda ^s_T(u)+ \Lambda ^s_T(u)^2)\Lambda ^{\beta}_T(u)^2\;,
\end{equation}
for all $\beta\geq s >s_{\alpha}$.
Using (\ref{emainth.0}) with $\beta=s$, a continuity argument and that ${\displaystyle \lim_{t\rightarrow 0} \Lambda ^s_t(u)=0}$, we have
$\Lambda ^s_T(u)\lesssim \varepsilon$ as soon as $\|u_0\|_{E^s}\leq \varepsilon$. By estimate (\ref{FBN}) together with the short time estimate (\ref{stbe.1}) it follows then that for $\|u_0\|_{E^s}\leq \varepsilon$,
\begin{equation}\label{emainth.1}
\Gamma ^s_T(u)=\max{(\|u\|_{B^s_T},\|u\|_{F^s_T})}\lesssim \varepsilon\;.
\end{equation}

Then Lemma \ref{LinftyEF}, estimates (\ref{FBN}), (\ref{stbe.1}) and (\ref{emainth.0}) lead to
\begin{equation}\label{emainth.2}
\|u\|_{L^{\infty}_TE^{\beta}}\leq \Gamma ^{\beta}_T(u) \lesssim \|u_0\|_{E^{\beta}}\;,
\end{equation}
for all $\beta \geq s$ as soon as $\|u_0\|_{E^s}\leq \varepsilon$. Using this above estimate with $\beta=3$ we can apply Theorem \ref{theo-Hinfini} a finite number of time and thus extend the solution $u$ of (\ref{bozk}) on the time interval $[-1,1]$.

\subsection{$L^2$-Lipschitz bounds and uniqueness.}
Let us consider two solutions $u_1$ and $u_2$  defined on $[-T,T]$, with initial data $\varphi_1$ and $\varphi_2$ and assume moreover that
\begin{equation}\label{L2bound.0}
\varphi _i\in B^s(\varepsilon) \quad \mbox{and} \quad
\Gamma _T^{s_\alpha^+}(u_i)\leq \varepsilon \, ,\; i=1,2.
\end{equation}
If we define the function $v$ by $v=u_1-u_2$, we see that $v$ is a solution of (\ref{eq-u0}) with $u=u_1+u_2$ and moreover $u$ solves (\ref{eq-u1}) with a nonlinear term which is $u_1^2+u_2^2$. It follows then from (\ref{coer.0}), (\ref{ee.00}), (\ref{FBN}),  the short time estimate (\ref{stbe.01}) together with the smallness assumptions
(\ref{L2bound.0}) that
\begin{equation}\label{L2bound.1}
\Gamma ^0_T(v)\lesssim \|\varphi_1-\varphi_2\|_{L^2(\R^2)}\;.
\end{equation}
With this $L^2$-bound in hand we can now state our uniqueness result.
\begin{proposition}\label{prop-uniq}
Let $s>s_\alpha$. Consider $u_1$ and $u_2$ two solutions of (\ref{bozk}) in $C([-T,T];E^s) \cap B^s(T)\cap F^s(T)$ for some $T>0$. If $u_1(0,.)=u_2(0,.)$, then $u_1=u_2$ on the time interval $[-T,T]$.
\end{proposition}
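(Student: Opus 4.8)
The plan is to deduce uniqueness from the $L^2$-Lipschitz estimate (\ref{L2bound.1}), which is already available for any pair of solutions fulfilling the smallness condition (\ref{L2bound.0}). The entire difficulty is therefore to place the two given solutions $u_1,u_2\in C([-T,T];E^s)\cap B^s(T)\cap F^s(T)$ into the framework (\ref{L2bound.0}); once this is done the conclusion is immediate. Indeed, $v=u_1-u_2$ solves (\ref{eq-u0}) with $u=u_1+u_2$, which in turn solves (\ref{eq-u1}) with nonlinearity $u_1^2+u_2^2$, and since the hypothesis $u_1(0,\cdot)=u_2(0,\cdot)$ means $\varphi_1=\varphi_2$, estimate (\ref{L2bound.1}) yields $\Gamma^0_T(v)\lesssim\|\varphi_1-\varphi_2\|_{L^2}=0$, hence $v\equiv 0$.

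First I would reduce to small data by the scaling used in the proof of Theorem \ref{est-theo-Hinfini}. Writing $u_{i,\lambda}(t,x,y)=\lambda u_i(\lambda^{1+1/\alpha}t,\lambda^{1/\alpha}x,\lambda^{1/2}y)$ and using (\ref{dilatation}) together with $\frac 34-\frac 1{2\alpha}>0$ for $1\le\alpha\le 2$, one makes $\|u_{i,\lambda}(0)\|_{E^s}\le\varepsilon$ for any prescribed $\varepsilon$. Since $u_i\mapsto u_{i,\lambda}$ is a bijection on solutions and the equation is autonomous, it suffices to prove a local-in-time uniqueness statement that can be iterated: I claim $u_1=u_2$ on an interval whose length is bounded below uniformly in terms of $M:=\max_i\|u_i\|_{L^\infty_TE^s}<\infty$. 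Translating the relevant initial time to the origin and choosing $\lambda_0=\lambda_0(M,\varepsilon)\sim(\varepsilon/M)^{(3/4-1/(2\alpha))^{-1}}$ so that the rescaled data has $E^s$-norm $\le\varepsilon$, while taking the rescaled time interval to be $[-1,1]$ (which corresponds to the fixed original length $\sim\lambda_0^{1+1/\alpha}$), reduces matters to solutions with data of size $\le\varepsilon$ on $[-1,1]$.

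On $[-1,1]$ the a priori bound (\ref{emainth.1}) provides $\Gamma^{s}_1(u_{i,\lambda})\lesssim\varepsilon$, hence $\Gamma^{s_\alpha^+}_1(u_{i,\lambda})\le\varepsilon$ after possibly shrinking $\varepsilon$, using $s_\alpha^+\le s$ so that $\|u\|_{F^{s_\alpha^+}}\le\|u\|_{F^s}$ and $\|u\|_{B^{s_\alpha^+}}\le\|u\|_{B^s}$. This is exactly the smallness hypothesis (\ref{L2bound.0}), so (\ref{L2bound.1}) applies and gives $u_{1,\lambda}=u_{2,\lambda}$ on $[-1,1]$; undoing the scaling yields $u_1=u_2$ on the original interval of length $\sim\lambda_0^{1+1/\alpha}$. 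Because $\lambda_0$ depends only on $M$ and $\varepsilon$, the step length is uniform, so finitely many iterations, propagating the equality of the endpoint values $u_1(t_k)=u_2(t_k)$ forward and backward in time, cover all of $[-T,T]$ and conclude the proof.

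The main obstacle is the very first reduction, namely guaranteeing the solution-norm smallness $\Gamma^{s_\alpha^+}_1(u_{i,\lambda})\le\varepsilon$ for the \emph{given} solutions, which lie only at the $E^s$ regularity rather than in $E^\infty$. This requires that the a priori estimate (\ref{emainth.1}), established above for $E^\infty$ data, remains valid for every solution in the class $C([-1,1];E^s)\cap B^s(1)\cap F^s(1)$; this in turn rests on the fact that the energy estimates of Proposition \ref{ee} (proved for smooth solutions) extend to this class through the Bona-Smith approximation underlying the existence theory. Once this extension is granted, the combination of scaling, the a priori bound, and the $L^2$-Lipschitz estimate (\ref{L2bound.1}) closes the argument.
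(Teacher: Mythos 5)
Your overall skeleton is the same as the paper's: rescale to make the data small, verify the smallness condition \eqref{L2bound.0} for the rescaled solutions, invoke the $L^2$-Lipschitz bound \eqref{L2bound.1} to get $v\equiv 0$ locally, then iterate. However, there is a genuine gap at exactly the point you flag and then set aside: to obtain $\Gamma^{s_\alpha^+}(u_{i,\lambda})\le\varepsilon$ you invoke the a priori bound \eqref{emainth.1}, which in the paper is established only for solutions in $C([-T,T];E^\infty)$ — its proof runs through the energy estimates of Proposition \ref{ee} and the continuity/bootstrap argument of Lemma \ref{lemmaKPcontrole}, both formulated for smooth solutions. Your proposed remedy, that these estimates ``extend to the rough class through the Bona-Smith approximation underlying the existence theory,'' is circular: Bona-Smith controls the \emph{particular} solution obtained as a limit of smooth solutions, whereas $u_1,u_2$ are \emph{arbitrary} elements of the class $C([-T,T];E^s)\cap B^s(T)\cap F^s(T)$; knowing that such an arbitrary solution coincides with (or inherits the bounds of) the Bona-Smith limit is precisely the uniqueness assertion being proved. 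So the key smallness cannot simply be ``granted.''

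The paper closes this gap without ever using \eqref{emainth.1} for rough solutions. Scaling alone makes the data and the norms $\|u_{i,\lambda}\|_{L^\infty_{T'}E^s}+\|u_{i,\lambda}\|_{B^s(T')}$ small (estimate \eqref{L2bound.3}), because these norms transform like the data norm and are multiplied by the \emph{given finite} constant $C=\max_i\Gamma^s_T(u_i)$ times a positive power of $\lambda$. The delicate norm is $F^s$, which does not scale favorably; there the paper splits frequencies: the piece $P_{>H}u_{i,\lambda}$ is small for $H$ large because it is the tail of a convergent dyadic sum, only \emph{finiteness} of $\|u_{i,\lambda}\|_{F^s(T)}$ being needed (estimate \eqref{L2bound.5}); the piece $P_{\le H}u_{i,\lambda}$ is then controlled through the linear estimate \eqref{FBN} and the crude bound $\|\cdot\|_{\N^s(\tilde{T})}\lesssim\|\cdot\|_{L^2_{\tilde{T}}E^s}$, yielding $\varepsilon+\tilde{T}^{1/2}H^{s+1/\alpha}\|u_{i,\lambda}\|_{L^\infty_{\tilde{T}}E^s}^2\le 2\varepsilon$ once $\tilde{T}=\tilde{T}(H,\varepsilon)$ is small (estimate \eqref{L2bound.6}). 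This verifies \eqref{L2bound.0} on $[-\tilde{T},\tilde{T}]$ for arbitrary rough solutions, and finitely many iterations plus undoing the dilation finish the proof. If you wanted to salvage your route instead, you would have to prove the energy estimates, the bilinear estimates and the continuity-in-time argument directly at the $E^s$ level of regularity — precisely the technical work the paper's frequency-splitting argument is designed to avoid.
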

{\it Proof.} Let be $C=\max{(\Gamma^s_T(u_1),\Gamma^s_T(u_2))}$. We consider the same dilatations $u_{i,\lambda}$ of $u_i$ as in the proof of Theorem \ref {est-theo-Hinfini}. As previously, they are solutions of (\ref{bozk}) on $[-T',T']$ with $T'=\lambda ^{-(1+1/\alpha)}T$ and with initial data $u_{i,\lambda}(0,x,y)=\lambda u(0,\lambda^{1\alpha}x,\lambda^{1/2}y)$. Then since we have
\begin{equation}\label{L2bound.2}
\|u_{i,\lambda}(0,.)\|_{E^s}\lesssim
\lambda^{3/4-1/(2\alpha)}(1+\lambda^s) \|u_{i,\lambda}(0,.)\|_{E^s} \;,
\end{equation}
and
\begin{align}\label{L2bound.3}
\|u_{i,\lambda}\|_{L^\infty_{T'}E^s}+\|u_{i,\lambda}\|_{B^s(T')}
 &\lesssim \lambda^{3/4-1/(2\alpha)}(1+\lambda^s)\left(\|u_{i,\lambda}\|_{L^\infty_{T}E^s}+\|u_{i,\lambda}\|_{B^s(T)}\right) \\
 &\lesssim  C\lambda^{3/4-1/(2\alpha)}(1+\lambda^s)\;.
\end{align}
Choosing $\lambda$ small enough we get
\begin{equation}\label{L2bound.2}
\|u_{i,\lambda}\|_{L^\infty_{T'}E^s}\lesssim \varepsilon \;,
\; \mbox{and} \;
\|u_{i,\lambda}\|_{B^s(T')}\lesssim \varepsilon\;.
\end{equation}
We prove now that for $\tilde{T}<T'$ small enough, we also have
\begin{equation}\label{l3bound.4}
\|u_{i,\lambda}\|_{F^s(\tilde{T})}\lesssim \varepsilon\;.
\end{equation}
Since $\|u_{i,\lambda}\|_{F^s(T)}\leq C$, we can always find $H\in \D$ such that
\begin{equation}\label{L2bound.5}
\|P_{>H}u_{i,\lambda}\|_{F^s(\tilde{T})} \leq \|P_{>H}u_{i,\lambda}\|_{F^s(T)}\leq \varepsilon\;, \, i=1,2.
\end{equation}
Moreover since $\|u\|_{\N^s(\tilde{T})}\leq C\|u\|_{L^2_{\tilde{T}}E^s}$, we infer from (\ref{FBN}), H\"older inequality and the Sobolev embedding $E^s\hookrightarrow H^{1/2}(\R^2)\hookrightarrow L^4(\R^2)$ (since $s\geq 1/2$) that
\begin{align}
\|P_{\leq H}u_{i,\lambda }\|_{F^s_{\tilde{T}}} & \leq  \|u_{i,\lambda }\|_{B^s_{\tilde{T}}}+\|P_{\leq H}\partial_x( u_{i,\lambda }^2)\|_{L^2_{\tilde{T}}E^s} \\
 &\leq  \|u_{i,\lambda }\|_{B^s_{\tilde{T}}}+\tilde{T}^{1/2}H^{s+1/\alpha}\|P_{\leq H} (u_{i,\lambda }^2)\|_{L^\infty_{\tilde{T}}L^2_{x,y}} \\
 &\leq  \varepsilon + \tilde{T}^{1/2}H^{s+1/\alpha}\| u_{i,\lambda }\|_{L^\infty_{\tilde{T}}L^4_{x,y}}^2 \\
 &\leq  \varepsilon + \tilde{T}^{1/2}H^{s+1/\alpha}\| u_{i,\lambda }\|_{L^\infty_{\tilde{T}}H^{1/2}}^2\;.
\end{align}
This leads to
\begin{align}\label{L2bound.6}
\|P_{\leq H}u_{i,\lambda }\|_{F^s_{\tilde{T}}} & \leq \varepsilon + \tilde{T}^{1/2}H^{s+1/\alpha}\| u_{i,\lambda }\|_{L^\infty_{\tilde{T}}H^{1/2}}^2 \\
&\leq \varepsilon + \tilde{T}^{1/2}H^{s+1/\alpha}\| u_{i,\lambda }\|_{L^\infty_{\tilde{T}}E^s}^2 \\
& \leq 2 \varepsilon\;,
\end{align}
by choosing $\tilde{T}$ small enough. Gathering estimates (\ref{L2bound.2}), (\ref{L2bound.5}) and (\ref{L2bound.6}), we thus obtain that the smallness condition (\ref{L2bound.0}) holds, which shows that $u_1=u_2$ on $[-\tilde{T},\tilde{T}]$ (since (\ref{L2bound.1}) holds). Using the same argument a finite number of time we obtain that $u_1=u_2$ on $[-T',T']$ and so on $[-T,T]$ by dilatation.

\subsection{Existence}

Let $s_\alpha <s<3$ and $u_0\in E^s$. By scaling considerations we can always assume that $u_0\in B^s(\varepsilon)$.
Following \cite{KP} we are going to use the Bona-Smith argument to obtain the existence of a solution $u$ with $u_0$ as initial data.

Consider $\rho\in S(\R^2)$
with $\int\rho (x,y)\,dxdy=1$ and $\int x^iy^j\rho (x,y)\, dxdy=0$ for $0\leq i\leq [s]+1$, $0\leq j\leq [s]+1$, $1\leq i+j$ and let us define $\rho_{\lambda}=\lambda^{1+1/\alpha }\rho(\lambda ^{1/\alpha}x, \lambda ^{1 /2}y)$. Then following \cite{BS} we have
\begin{lemma}\label{BonaSmith}
Let $s\geq 0$, $\varphi\in E^s$ and $\varphi_\lambda =\rho _\lambda * \varphi$. Then,
\begin{equation}\label{BS.0}
\| \varphi_\lambda \|_{E^{s+\delta }}\lesssim \lambda^{-\delta}\|\varphi\|_{E^s} \;,\, \forall \delta \leq 0 \;,
\end{equation}
and
\begin{equation}\label{BS.1}
\| \varphi_\lambda -\varphi \|_{E^{s-\delta }}=o\left( \lambda^{\delta}\right) \;,\, \forall  \delta \in {[0,s]}\;.
\end{equation}
\end{lemma}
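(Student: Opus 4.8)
The plan is to work entirely on the Fourier side, since $\|\cdot\|_{E^s}$ is a weighted $L^2$ norm and $\widehat{\varphi_\lambda}=\widehat{\rho_\lambda}\,\widehat{\varphi}$. The anisotropic dilation defining $\rho_\lambda$ dilates the $x$-frequency by $\lambda^{1/\alpha}$ and the $y$-frequency by $\lambda^{1/2}$, so that $\widehat{\rho_\lambda}(\xi,\mu)$ is $\widehat{\rho}$ evaluated at $(\lambda^{1/\alpha}\xi,\lambda^{1/2}\mu)$. The key structural point is that this is precisely the scaling under which the symbol governing $E^s$ behaves homogeneously: $|\lambda^{1/\alpha}\xi|^\alpha+(\lambda^{1/2}\mu)^2=\lambda(|\xi|^\alpha+\mu^2)$. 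Hence, writing $w=|\xi|^\alpha+\mu^2$, every multiplier below depends on $(\xi,\mu)$ only through $w$ and its rescaled version $\lambda w$, which collapses the anisotropic two-dimensional question into a one-parameter analysis.

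For \eqref{BS.0} I would simply estimate the Fourier multiplier in $L^\infty$. Using that $\widehat{\rho}\in\mathcal{S}(\R^2)$ and the elementary inequality $\cro{w}\lesssim\lambda^{-1}\cro{\lambda w}$ (valid in the relevant regime), the weight $\cro{w}^{\delta}$ is absorbed by the rapid decay of $\widehat{\rho}$ at the cost of $\lambda^{-\delta}$, so that $\|\cro{w}^{\delta}\widehat{\rho_\lambda}\|_{L^\infty}\lesssim\lambda^{-\delta}$. Pulling this bound out of the integral defining $\|\varphi_\lambda\|_{E^{s+\delta}}$ while keeping $\cro{w}^{s}\widehat{\varphi}\in L^2$ gives \eqref{BS.0} immediately; this step is routine and uses nothing beyond the Schwartz character of $\rho$.

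The substantive estimate is \eqref{BS.1}, and this is where the vanishing moments are used. They force $\widehat{\rho}(0,0)=1$ together with $\partial_\xi^i\partial_\mu^j\widehat{\rho}(0,0)=0$ for all $1\le i+j$ with $i,j\le[s]+1$, so that $\widehat{\rho}-1$ vanishes at the origin to the high order dictated by the largest surviving moment. I would then write $\lambda^{-2\delta}\|\varphi_\lambda-\varphi\|_{E^{s-\delta}}^2=\int_{\R^2} m_\lambda^2\,\cro{w}^{2s}|\widehat{\varphi}|^2$ with the normalized multiplier $m_\lambda:=\lambda^{-\delta}|\widehat{\rho_\lambda}-1|\,\cro{w}^{-\delta}$, and establish two properties of $m_\lambda$: it is bounded uniformly in $\lambda$, and it tends to $0$ pointwise as $\lambda\to0$. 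Since $\cro{w}^{2s}|\widehat{\varphi}|^2\in L^1(\R^2)$, dominated convergence then drives the integral to $0$, which yields the genuine $o(\lambda^\delta)$ in \eqref{BS.1} rather than the $O(\lambda^\delta)$ that the pointwise flatness alone would give.

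The main obstacle is the uniform bound on $m_\lambda$ across the full range $0\le\delta\le s$, which I would obtain by splitting according to $\lambda w\gtrsim1$ and $\lambda w\lesssim1$. On $\lambda w\gtrsim1$ one uses only $|\widehat{\rho_\lambda}-1|\lesssim1$ together with $\cro{w}^{-\delta}\lesssim\lambda^{\delta}$, so the prefactor $\lambda^{-\delta}$ is absorbed. On $\lambda w\lesssim1$ the rescaled argument $(\lambda^{1/\alpha}\xi,\lambda^{1/2}\mu)$ is small, and I expect the flatness of $\widehat{\rho}-1$ to provide a gain that, after using $\lambda^{1/\alpha}|\xi|\le(\lambda w)^{1/\alpha}$ and $\lambda^{1/2}|\mu|\le(\lambda w)^{1/2}$, controls $\lambda^{-\delta}\cro{w}^{-\delta}$; here the binding direction is the $\mu$-variable, whose scaling exponent $\tfrac12$ is the smallest, and it is precisely to make this gain available for $\delta$ up to $s$ that the moments are imposed up to order $[s]+1$. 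The pointwise decay then follows from the same flatness bound applied at a fixed $(\xi,\mu)$. With these two properties in place the little-$o$ conclusion is immediate.
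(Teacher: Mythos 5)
Your framework is sound, and since the paper gives no proof of this lemma at all (it is quoted from Bona--Smith), a self-contained Fourier-side argument is exactly what is needed. Your reading of the statement is also the correct one: in \eqref{BS.0} the condition must be $\delta\ge 0$ (as written, with $\delta\le 0$, the estimate is false for any $\varphi\neq 0$ as $\lambda\to 0$), and $\rho_\lambda$ must be normalized as $\lambda^{-1/\alpha-1/2}\rho(\lambda^{-1/\alpha}x,\lambda^{-1/2}y)$ so that $\widehat{\rho_\lambda}(\xi,\mu)=\widehat{\rho}(\lambda^{1/\alpha}\xi,\lambda^{1/2}\mu)$. Your proof of \eqref{BS.0} is correct, and so is the reduction of \eqref{BS.1} to a uniform bound plus pointwise decay of $m_\lambda=\lambda^{-\delta}|\widehat{\rho_\lambda}-1|\cro{w}^{-\delta}$ followed by dominated convergence, including the treatment of the region $\lambda w\gtrsim 1$.

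The gap is precisely in the step you left to ``flatness'', and it does not close. The hypotheses only give $\partial_\xi^i\partial_\mu^j\widehat{\rho}(0,0)=0$ for $i,j\le[s]+1$, $i+j\ge 1$; the moment $\int y^{[s]+2}\rho$, i.e. $\partial_\mu^{[s]+2}\widehat{\rho}(0,0)$, may be nonzero. After the substitution $\mu'=\lambda^{1/2}\mu$ the corresponding Taylor term of $\widehat{\rho}-1$ contributes only $|\mu'|^{[s]+2}\le(\lambda w)^{([s]+2)/2}$, so on $\{\lambda w\lesssim 1\}$ you can only bound $m_\lambda$ by $\lambda^{([s]+2)/2-\delta}$ times powers of $w$, and $([s]+2)/2<s$ as soon as $s+\{s\}>2$ (writing $\{s\}=s-[s]$), e.g.\ for every $s\in(3/2,2)$. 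For such $s$ and $\delta$ close to $s$, both the uniform bound and the pointwise decay of $m_\lambda$ fail. Worse, this is not a repairable write-up issue under the stated hypotheses, because the conclusion itself fails: take $\alpha=2$, $s=7/4$, $\delta=s$, a kernel $\rho$ with $\int y^3\rho\neq 0$ (allowed, since $3>[s]+1=2$), and $\widehat{\varphi}$ a narrow bump centred at $(\xi,\mu)=(0,1)$; then $|\widehat{\rho_\lambda}-1|\sim c\,\lambda^{3/2}$ on the support of $\widehat{\varphi}$, hence $\|\varphi_\lambda-\varphi\|_{E^0}\sim\lambda^{3/2}$, which is not $o(\lambda^{7/4})$. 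So your key quantitative claim --- that moments up to order $[s]+1$ make the gain available for all $\delta\le s$ --- is false; the defect is inherited from the lemma as stated, and it matters for the paper, whose existence proof uses the lemma for $s_\alpha<s<3$. The cure is to strengthen the moment assumptions to match the anisotropy (recall $E^s=H^{\alpha s,2s}$, so the $y$-regularity is $2s$): require the vanishing of all moments with $1\le i+j\le[2s]+1$ (or $i\le[\alpha s]+1$, $j\le[2s]+1$). Then every surviving Taylor term carries $(\lambda w)^{i/\alpha+j/2}$ with $i/\alpha+j/2>s\ge\delta$, both of your properties of $m_\lambda$ hold on $\{\lambda w\lesssim 1\}$ as well, and your dominated-convergence conclusion goes through verbatim.
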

Consider now the smooth initial data $u_{0,\lambda}=\rho _\lambda * u_0$. Since $u_{0,\lambda}\in H^\infty(\R^2)$ for any $\lambda >0$, by Theorem \ref{theo-Hinfini}, there exist $T_\lambda >0$ and an unique solution $u$ of (\ref{bozk}) such that $u_\lambda \in C([-T_\lambda , T_\lambda];H^\infty(\R^2))$ with initial data $u_\lambda(0,.)=u_{0,\lambda}$.
Note first that from (\ref{BS.0}) we have $\|u_{\lambda,0}\|_{E^s}\leq \|u_0\|_{E^s}\leq \varepsilon$. Hence following the proof of Theorem \ref{est-theo-Hinfini}, the sequence $(u_\lambda)$ can be extended on the interval $[-1,1]$ and moreover
\begin{equation}\label{exist.0}
\Gamma^s_1(u_\lambda)\leq C \|u_{\lambda,0}\|_{E^s}\lesssim \varepsilon \;\; \mbox{and} \; \;
\Gamma_1^{s+s_\alpha^+}(u_\lambda)\lesssim \|u_{0,\lambda}\|_{E^{s+s_\alpha^+}}\lesssim \lambda ^{-s_\alpha^+}\|u_0\|_{E^s}\;.
\end{equation}
Then we get from (\ref{L2bound.1}) and (\ref{BS.1}) that for $0<\lambda'\leq \lambda$,
\begin{equation}\label{exist.1}
\Gamma^0_1(u_\lambda - u_{\lambda '})\lesssim \|u_{0,\lambda}-u_{0,\lambda '}\|_{L^2(\R^2)}=o(\lambda ^{s})\;.
\end{equation}
Moreover, from estimates (\ref{FBN}), (\ref{stbe.1}), (\ref{ee.0}) we see that, for $s>s_\alpha$,
\begin{align}\label{exist.2}
\Gamma^s_1(u_\lambda - u_{\lambda '})^2 & \lesssim (1+  \|u_{0,\lambda}-u_{0,\lambda '}\|_{E^0} )\|u_{0,\lambda}-u_{0,\lambda '}\|_{E^s}^2\\
&  + (\Gamma^{s+s_\alpha ^+}_1(u_\lambda) + \Gamma^{s+s_\alpha ^+}_1(u_{\lambda '}) )\, \Gamma^0_1(u_\lambda - u_{\lambda '})\Gamma^s_1(u_\lambda - u_{\lambda '})\\
&  +(\Gamma_1^s(u_\lambda )^2+\Gamma_1^s(u_{\lambda '})^2)\Gamma_1^s(u_\lambda - u_{\lambda '})^2,
\end{align}
which leads to
\begin{equation}
\Gamma^s_1(u_\lambda - u_{\lambda '})^2\lesssim \|u_{0,\lambda}-u_{0,\lambda '}\|_{E^s}^2
+(\Gamma^{s+s_\alpha ^+}_1(u_\lambda) +\Gamma^{s+s_\alpha ^+}_1(u_{\lambda '}) )\, \Gamma^0_1(u_\lambda - u_{\lambda '})\,.
\end{equation}
Thus we have
\begin{equation}\label{exist.3}
\| u_\lambda - u_{\lambda '}\|_{L^\infty _1E^s}\lesssim  \Gamma^s_1(u_\lambda - u_{\lambda '})\rightarrow 0 \; \mbox{if} \; \lambda \rightarrow 0\;.
\end{equation}
This proves that the sequence $(u_\lambda)$ converges in the norm $\Gamma^s_1$ to a solution $u$ of (\ref{bozk}), which ends the proof.

\subsection{Continuity of the flow map} We refer to \cite{KP} for the continuity of the flow-map, which follows easily now from the results given in the previous subsections together with Theorem \ref{theo-Hinfini}

\section{Appendix.}
In this section we prove our $C^2$ ill-posedness result for initial data in $E^s$ (for all $s\in \R$) when $1\leq \alpha <2$. This extends previous results in \cite{EP} where the ill-posedness of (\ref{bozk}) is proved in $E^s$, for all $s\in \R$, assuming that $\alpha\leq 4/3$. This result has to be viewed as an extension of the well-known result in \cite{MST} where the $C^2$ ill-posedness in $H^s(\R)$ (for all $s\in \R$) of the one dimensional generalized Benjamin-Ono equation
$\partial_tu+D^\alpha_x u_x=uu_x$ is proved for all $\alpha \in [1,2[$.

Following \cite{MST}, we see that it is enough to build a sequence of functions $f_N$ such that, for all $s\in \R$,
\begin{equation}\label{CS1 fn}
\|f_N\|_{E^s} \leq C \;,
\end{equation}
and
\begin{equation} \label{CS2 fn}
\lim_{N\rightarrow +\infty}\left\| \int_0^t U(t-t')[U(t')f_NU(t')(f_N)_x]\, dt' \right\|_{E_s}=+\infty \,.
\end{equation}
Let $N$ large enough, $\gamma \ll 1$ and $0<\varepsilon \ll 1$ such that $\gamma ^\varepsilon \lesssim 1$. Let us now define the subsets of $\R^2$,
\begin{equation}\label{defQi}
 Q^+_1=[\gamma/2,\gamma]\times [\gamma ^{\varepsilon},2\gamma^{\varepsilon}]
 \quad \mbox{and} \quad
Q^+_2=[N,N+\gamma]\times [-\gamma ^{\varepsilon}/2,-\gamma^{\varepsilon}] \,.
\end{equation}
Then define $Q^-_1=-Q^+_1$ and $Q^-_2=-Q^+_2$. We consider $f_N$ defined through its Fourier transform by
 \begin{equation}\label{def fn}
 \F (f_N)(\zeta )=\gamma^{-\frac{1+\varepsilon}{2}}\left( 1_{Q^+_1}(\zeta )+1_{Q^-_1}(\zeta )\right)
 +\gamma^{-\frac{1+\varepsilon}{2}}N^{-\alpha s}\left( 1_{Q^+_2}(\zeta )+1_{Q^-_2}(\zeta )\right) \;.
 \end{equation}
 Clearly the sequence $f_N$ is real valued and moreover (\ref{CS1 fn}) holds by obvious calculations.
 Consider now
 $$
 I_N(t,x,y)=\int_0^t U(t-t')[U(t')f_NU(t')(f_N)_x]\, dt'.
 $$
 Standard calculations leads then to
 $$
 I_N=\int_{\R^4} e^{i(x\xi+y\mu+t\omega (\zeta))}\, \xi \, \F (f_N)(\zeta _1)\F (f_NJ)(\zeta - \zeta _1)
 \frac{e^{it\Omega (\zeta_1,\zeta - \zeta _1)}-1}{\Omega (\zeta_1 , \zeta -\zeta_1)}\,d\zeta d\zeta_1\;,
 $$
  with $\Omega (\zeta _1 ,\zeta -\zeta _1)=\omega (\zeta)-\omega(\zeta _1)-\omega (\zeta - \zeta _1)$.
  By localization considerations, observe now that $I_N$ can be rewritten as the sum of eight terms with disjoint supports corresponding to each  different interactions in the nonlinear term.
  Hence, considering only the low-high interaction $1_{Q^+_1}(\zeta )1_{Q^+_2}(\zeta)$,  it will be enough to prove that (\ref{CS2 fn}) holds where $\F (I_N)$ is now replaced by
  $$
  \F(I_N)(t,\zeta )=\gamma ^{-(1+\varepsilon )}N^{-\alpha s}  e^{it\omega (\zeta)}\xi
  \int_{\zeta _1\in Q^+_1,\zeta - \zeta _1\in Q^+_2}
 \frac{e^{it\Omega (\zeta_1, \zeta -\zeta _1)}-1}{\Omega (\zeta_1, \zeta -\zeta_1)}\, d\zeta_1\;.
  $$
  We claim now that for $\zeta _1\in Q_1^+$, $\zeta - \zeta _1\in Q^+_2$ and $\gamma =o(N)$, then it holds,
  \begin{equation}\label{min phase}
 | \Omega (\zeta _1,\zeta - \zeta _1)| \sim \gamma N^\alpha \;.
  \end{equation}
  Recall first that
  $$
  \Omega (\zeta _1,\zeta - \zeta _1)=[\xi |\xi |^\alpha - \xi _1|\xi _1|^\alpha - (\xi - \xi _1) |\xi - \xi _1 |^\alpha ]
  +[\xi\mu ^2 -\xi _1\mu _1^2- (\xi - \xi _1)(\mu -\mu _1)^2 ]=I+II \;.
  $$
  {\it {\bf . Contribution I}}

 By virtue of the mean value theorem we infer that there exists $\theta \in [\xi - \xi_1,\xi]$ such that
 $$
 \left| \xi |\xi |^\alpha - (\xi - \xi _1) |\xi - \xi _1 |^\alpha \right|=(\alpha +1)| \xi _1 | |\theta |^{\alpha}\;,
 $$
 which leads to
  \begin{equation}\label{est I1}
   | \xi |\xi |^\alpha - (\xi - \xi _1) |\xi - \xi _1 |^\alpha | \sim \gamma N^\alpha \;,
  \end{equation}
Moreover, recalling that $| \xi _1 | \sim \gamma =o(N)$ we have
  \begin{equation}\label{est I2}
  | \xi _1|\xi _1|^\alpha | \sim \gamma ^{\alpha +1}=o(N) \;.
  \end{equation}
  Then gathering (\ref{est I1}) and ({\ref{est I2}) we obtain
  \begin{equation}\label{est I}
  I\simeq \gamma N^\alpha \;.
  \end{equation}
  {\it {\bf . Contribution II}}

   Since $\zeta _1\in Q_1^+$ and  $\zeta - \zeta _1\in Q^+_2$, then
   ${\displaystyle
   N+\frac{\gamma}{2} \leq \xi \leq N+2\gamma }$ and
  ${\displaystyle
   \frac{1}{2}\gamma^{\varepsilon} \leq \mu \leq \gamma ^{\varepsilon}
   }$
   which leads to
   \begin{equation}\label{est II1}
   \frac{1}{4}\gamma ^{2\varepsilon }\left(N+\frac{\gamma}{2}\right)
   \leq \xi \mu^2 \leq
   \gamma ^{2\varepsilon }\left( N+ 2\gamma \right) \;.
   \end{equation}
  On the other hand, since  $\zeta - \zeta _1\in Q^+_2$ we have

\begin{equation}\label{est II2}
   -\frac{1}{4}\gamma ^{2\varepsilon }\left(N+\gamma \right)
   \leq  - (\xi - \xi _1)( \mu - \mu_1)^2 \leq
   -\gamma ^{2\varepsilon }N\;.
   \end{equation}
   In the same way, since $\zeta _1\in Q_1^+$ we have
   \begin{equation}\label{est II3}
  | \xi _1 \mu_1 ^2 | \sim \gamma ^{1+2\varepsilon }
   \end{equation}
   Then gathering (\ref{est II1}),(\ref{est II2}) and (\ref{est II3}) we infer that

\begin{equation}\label{est II}
   II = O(\gamma ^{1+2\varepsilon }) \;.
   \end{equation}
   Then (\ref{min phase}) follows from (\ref{est I}) together with (\ref{est II}).

   Choosing now $\gamma =N^{-(\alpha +\delta)}$ for some $\delta >0$, it follows  from (\ref{min phase}) that
   $$
   \left| \frac{e^{it\Omega (\zeta_1, \zeta - \zeta _1)}-1}{\Omega (\zeta_1, \zeta -\zeta_1)} \right| \sim |t|,
   $$
   which lead then to
   \begin{equation}\label{est In}
   \|I_N\|_{E^s}^2 \gtrsim
   \gamma ^{-2(1+\varepsilon)} N^{-2\alpha s+2 }|t|\gamma^{1+\varepsilon}
    (N^\alpha + \gamma ^{2\varepsilon})^{2s}\gamma ^{2(1+\varepsilon)} .
   \end{equation}
   Thus, choosing $\varepsilon (\alpha)$ and $\delta (\alpha)$ small enough, we have
   $$
   \lim_{N\rightarrow + \infty}\|I_N\|_{E^2}^2 \gtrsim
   \lim_{N\rightarrow + \infty} |t |N^2\gamma^{1+\varepsilon}\gtrsim
   \lim_{N\rightarrow + \infty}|t| N^{2-\alpha -\varepsilon (\alpha +\delta )-\delta }
   = +\infty \;,
   $$
   for all $\alpha \in {[1,2[}$ and for all $s\in \R $. This ends the proof of (\ref{CS2 fn}).

\end{document}